\begin{document}

\title[The Sliding Frank-Wolfe Algorithm]{The Sliding Frank-Wolfe Algorithm \\ and its Application to Super-Resolution Microscopy}

\author{Quentin Denoyelle$^1$, Vincent Duval$^2$, Gabriel Peyr{\'e}$^3$, and Emmanuel Soubies$^4$}

\address{$^1$CEREMADE, Univ. Paris-Dauphine \\ $^2$INRIA Paris, MOKAPLAN \\  $^3$CNRS \& ENS \\ $^4$Biomedical Imaging Group, EPFL }
\ead{denoyelle@ceremade.dauphine.fr, vincent.duval@inria.fr, gabriel.peyre@ens.fr, emmanuel.soubies@epfl.ch }
\vspace{10pt}

%
%
%
%
%

%

\begin{abstract}
This paper showcases the theoretical and numerical performance of the Sliding Frank-Wolfe, which is a novel optimization algorithm to solve the BLASSO sparse spikes super-resolution problem. 
The BLASSO is a continuous (\ie off-the-grid or grid-less) counterpart to the well-known $\ell^1$ sparse regularisation method (also known as LASSO or Basis Pursuit). Our algorithm is a variation on the classical Frank-Wolfe (also known as conditional gradient) which follows a recent trend of interleaving convex optimization updates (corresponding to adding new spikes) with non-convex optimization steps (corresponding to moving the spikes).
Our main theoretical result is that this algorithm terminates in a finite number of steps under a mild non-degeneracy hypothesis. 
We then target applications of this method to several instances of single molecule fluorescence imaging modalities, among which certain approaches rely heavily on the inversion of a Laplace transform.
Our second theoretical contribution is the proof of the exact support recovery property of the BLASSO to invert the 1-D Laplace transform in the case of positive spikes.
On the numerical side, we conclude this paper with an extensive study of the practical performance of the Sliding Frank-Wolfe on different instantiations of single molecule fluorescence imaging, including convolutive and non-convolutive (Laplace-like) operators. This shows the versatility and superiority of this method with respect to alternative sparse recovery technics. 
\end{abstract}

%

\section{Introduction}

\subsection{Super-Resolution using the BLASSO} \label{sec-intro-blasso-pw}

Super-resolution consists in retrieving the fine scale details of a possibly noisy signal from coarse scale information. The  importance of recovering the high frequencies of a signal comes from the fact that there is often a physical blur in the acquisition process, such as diffraction in optical systems, wave reflection in seismic imaging or spikes recording from neuronal activity. 

In resolution theory~\cite{den-resolution1997}, the two-point resolution criterion defines the ability of a system to resolve two points of equal intensities. 
 It is defined as a distance, namely  the Rayleigh criterion, which only depends on the system. In the case of the ideal low-pass filter (\ie convolution with the Dirichlet kernel) with cutoff frequency $f_c$, the Rayleigh criterion is $1/f_c$. Then, super-resolution in signal processing consists in developing techniques which enable to retrieve information below the Rayleigh criterion.

Let us introduce in a more formal way the problem which will be the framework of this article. Let $\Pos$ be a connected subset of $\RR^d$ with non-empty interior or the $d$-dimensional torus $\TT^d$ ($d\in\NN^*$) and $\radon$ the Banach space of bounded Radon measures on $\Pos$. The latter can be seen as the topological dual of $\ContX(X,\RR)$, the space  of continuous functions on $\Pos$ that vanish at infinity. We consider a given integral operator $\Phi:\radon\to\Obs$, where $\Obs$ is a separable Hilbert space, whose kernel $\phi$ is supposed to be a smooth function (see Definition~\ref{sec:intro-def:admkernel} for the technical assumptions made on $\phi$), \ie 
\begin{align}\label{def-Phi}
	\forall m\in\radon,\quad\Phi m\eqdef\int_\Pos \phi(\pos)\d m(\pos).
\end{align}
The operator $\Phi$ models the acquisition process. It includes translation-invariant operators such as convolutions (\ie $\phi(x)=\tilde{\phi}(\cdot-x)$) as well as non-translation invariant operators such as the Laplace transform ($X=\RR_+^*$ and $\phi(x) = (t\mapsto e^{-tx} ) \in L^2(\RR_+)$) considered in the present paper.

%

%
%
The sparse spikes super-resolution problem aims at recovering an approximation of an unknown input discrete measure $\measO\eqdef\sum_{i=1}^{N} \ampOi \dirac{\posOi}$ from noisy measurements $\obsw\eqdef\obsO+w$ where $\obsO \eqdef \Phi\measO$ and $w \in \Hh$ models the acquisition noise. Here $\ampOi\in\RR$ are the amplitudes of the Dirac masses at positions $\posOi\in\Pos$.
This is an ill-posed inverse problem and the BLASSO is a way to solve it in a stable way by introducing a sparsity-enforcing convex regularization.

\subsubsection{From the LASSO to the BLASSO}

%
%
%

The common practice in sparse spike recovery relies on $\lun$ regularization which is known as LASSO  in statistic~\cite{tibshirani-regression1994} or  basis pursuit  in the signal processing community~\cite{chen-atomic1998}.  Given a grid of possible positions, the reconstruction problem is addressed as the minimization of a quadratic error subject to an $\lun$ penalization. The $\lun$ prior  provides solutions with few nonzero coefficients and can be computed efficiently with convex optimization methods. Moreover, recovery guarantees have been proved under certain assumptions~\cite{donoho-superresolution1992}.


Following recent works (see for instance \cite{bhaskar-atomic2011,bredies-inverse2013,candes-towards2013,deCastro-exact2012,duval-exact2013}), we consider instead  sparse spike estimation methods  which operate over a continuous domain, \ie without resorting to some sort of discretization on a grid. The inverse problem is solved over the space of Radon measures which is a non-reflexive Banach space. This continuous \guillemet{grid-free} setting makes the mathematical analysis easier and allows us to make precise statement about the location of the recovered spikes. 

The technique that we are considering in this paper consists in solving a convex optimization problem that uses the total variation norm, which is  the counterpart of the $\lun$ norm for measures. It  favors the emergence of spikes in the solution and is defined by
\begin{align}\label{sec:intro-eq:TV}
  	\forall m \in \radon, \quad
	\normTVX{m}	\eqdef	\usup{\psi \in \ContX} \enscond{ \int_\Pos \psi \d m }{ \normLi{\psi} \leq 1 }.
\end{align}
In particular, for $\measO\eqdef\sum_{i=1}^{N} \ampOi \dirac{\posOi}$,
\eq{
	\normTVX{m_{\ampO,\posO}}=\normu{\ampO},
}
which shows in a way that the total variation norm generalizes the $\lun$ norm to the continuous setting of measures (\ie no discretization grid is required).

When no noise is contaminating the data, one considers the classical basis pursuit, defined originally in~\cite{chen-atomic1998} in a finite dimensional setting, written here over the space of Radon measures
\begin{align}\label{eq-blasso-noiseless}
  \umin{m \in \radon} 
\normTVX{m} \quad \mbox{s.t.}\quad \Phi m=\obsO \tag{$\bpursuit$}.
\end{align}
This problem is studied in~\cite{candes-towards2013}, in the case where $\Phi$ is an ideal low-pass filter on the torus $X=\TT$.

When the signal is noisy, \ie when one observes $y=\obsO+w$, with $w\in \Obs$, we may rather consider the problem
\begin{align}\label{eq-blasso-noisy}
	\umin{m \in \radon} \frac{1}{2} \normObs{\Phi m - y}^2 + \lambda \normTVX{m} \tag{$\blasso$}.
\end{align}
Here $\lambda>0$ is a parameter that should be adapted to the noise level $\normObs{w}$. This problem is coined as BLASSO~\cite{deCastro-exact2012}.

\subsubsection{BLASSO performance analysis}

In order to quantify the recovery performance of the methods $\bpursuit$ and $\blasso$, the following two questions arise:
\begin{enumerate}
 	\item Does the solutions of $\bpursuit$ recover the input measure $\measO$ ?
 	\item How close is the solution of $\blasso$ to the solution of $\bpursuit$ ?
\end{enumerate}

When the amplitudes of the spikes are arbitrary complex numbers, the answers to the above questions require a large enough minimum separation distance $\De(\measO)$ between the spikes, where 
\begin{align}\label{min-sep-dist}
	\De(\measO) \eqdef \umin{i \neq j} \dX(\posOi,\posOj).
\end{align}
When $\Pos=\TT$, $\dX$ is the geodesic distance on the circle
\begin{align}\label{dist-T}
	\forall x,y\in\RR, \quad \dX(x+\ZZ,y+\ZZ)=\min_{k\in\ZZ} |x-y+k|.
\end{align}
In~\cite{candes-towards2013}, the authors shows that for the ideal low-pass filter,  $\measO$ is the unique solution of $\bpursuit$ provided that $\De(\measO)\geq \frac{C}{f_c}$ where $C>0$ is a universal constant and $f_c$ the cutoff frequency of the ideal low-pass filter. In the same paper, it is shown that $C\leq 2$ when $\ampO\in\CC^N$ and $C\leq 1.87$ when $\ampO\in \RR^N$. In~\cite{carlos-super2015}, the constant $C$ is further refined to  $C\leq 1.26$ when $\ampO\in \RR^N$. Suboptimal lower bounds on $C$ were given in~\cite{duval-exact2013,tang2015resolution}. Moreover, it was recently shown in~\cite{ferreira-2018tight} that necessarily $C\geq 1$ in the sense that for all $\varepsilon>0$, and for $f_c$ large enough, there exist measures with $\De(\measO)\geq (1-\varepsilon)/f_c$ which are not identifiable using~\eqref{eq-blasso-noiseless}.

The second question receives partial answers in~\cite{azais-spike2014,bredies-inverse2013,candes-superresolution2013,fernandez-support2013}. In~\cite{bredies-inverse2013}, it is shown that if the solution of $\bpursuit$ is unique then the measures recovered by $\blasso$ converge in the 
weak-* sense to the solution of $\bpursuit$ when $\lambda \rightarrow 0$ and $\normObs{w}/\lambda \rightarrow 0$. In~\cite{candes-superresolution2013}, the authors measure the reconstruction error using the $\Ldeux$ norm of an ideal low-pass filtered version of the recovered measures. 
In~\cite{azais-spike2014,fernandez-support2013}, error bounds are given on the locations of the recovered spikes with respect to those of the input measure $\measO$.
However, those works provide little information about the geometrical structure of the measures recovered by $\blasso$. That point is addressed in~\cite{duval-exact2013} where the authors show that under the \emph{Non Degenerate Source Condition}, there exists a unique solution to $\blasso$ with the exact same number of spikes as the original measure provided that $\lambda$ and $\normObs{w}/\lambda$ are small enough. Moreover in that regime, this solution converges to the original measure when the noise drops to zero.

\paragraph{BLASSO for positive spikes.}

For positive spikes (\ie $\ampOi>0$), the picture is radically different. Exact recovery of $\measO$ without noise (\ie $(w,\la)=(0,0)$) holds whatever the distance between the spikes~\cite{deCastro-exact2012}, but stability constants explode as $\De(\measO) \to 0$.
However, the authors in~\cite{candes-stable2014}  show  that stable recovery is obtained if the signal-to-noise ratio grows faster than $O(1/\De^{2N})$. This  closely matches the optimal lower bounds of $O(1/\De^{2N-1})$ obtained by combinatorial methods~\cite{demanet-recoverability2014}.

Finally, provided a certain nondegeneracy condition, it was recently shown in~\cite{Denoyelle2017}  that support recovery is guaranteed in the presence of noise if  the signal-to-noise ratio grows faster than $O(1/\De^{2N-1})$.

\subsection{Solving the BLASSO}
As the BLASSO is an optimization problem over the infinite dimensional space of Radon measures $\radon$, its resolution is challenging.  We review in this section the existing approaches to tackle this problem. They can be roughly divided into three main families although there exists a flurry of generalizations and extensions that must be considered separately.

\myparagraph{Fixed spatial discretization} A common approach consists in constraining the measure to be supported on a grid. This  leads  to a finite dimensional convex optimization problem---known as LASSO~\cite{tibshirani-regression1994} or basis pursuit~\cite{chen-atomic1998}---for which there exist numerous solvers. These include the block-coordinate descent (BCD) algorithm~\cite{tseng-convergence2001,wu-coordinate2008}, the homotopy/LARS algorithm~\cite{efron-lars2004,soussen-homotopy2015}, or proximal forward-backward splitting algorithms \cite{combettes-fb2005} such as  the Iterative Soft Thresholding (IST)~\cite{daubechies-ist}.  Although simples to implement, the latters are in general slow to converge (the error in the objective function is typically of the order of $O(1/k)$, where $k$ is the number of iterations)~\cite{daubechies-ist,donoho-adapting1999,figueiredo-em2003}. However, there exist accelerated versions such as FISTA~\cite{beck-fista2009}, which benefit from a better non-asymptotic rate of convergence ($O(1/k^2)$). Finally, it is noteworthy that  these proximal methods enjoy a linear asymptotic rate (see for instance~\cite{LiangLinearFB}), but this  regime can be slow to reach.

The main limitation of these grid-based methods is that, in order to go below the Rayleigh limit and perform super-resolution, the grid must be thin enough. This leads to theoretical and practical issues. Indeed, refining the grid not only increases the computational cost of each iteration, but it also deteriorates the conditioning of the linear operator to invert.  Hence, in practice, these methods provide solutions which are composed of small clusters of non-zero coefficents around  each ``true'' spike.  A way to mitigate this issue is to perform a post processing by replacing each cluster of spikes by its center of mass, as proposed in~\cite{tang-justdiscretize2013,flinthweiss2018}. This drastically reduces the number of false positive spikes although it is hard to analyze theoretically and can be unstable. Instead, one can also consider methods based on safe rules~\cite{elghaoui-safe2010} which perform a progressive pruning of the grid and keep only active sets of weights~\cite{salmon-screening2017}. Finally, it has been shown in~\cite{duval-thingridsI2017,duval-thingridsII2017}  that  the solution of the LASSO, in a small noise regime and when the step size tends to zero, contains pairs of spikes around the true ones.

\myparagraph{Fixed spectral discretization and  semidefinite programming (SDP) formulation}  In~\cite{candes-towards2013}, the authors propose a reformulation of  the Basis Pursuit for measures into an equivalent finite dimensional SDP for which  solvers exist. Similarly, one can get an SDP formulation of the BLASSO. However, these equivalences are only true in a $1$-dimensional setting. In higher dimensions ($d\geq2$), one needs to use the so-called Lasserre's hierarchy~\cite{lasserre-moments2009,lasserre-global2004}. This principle has been used for the super-resolution problem in~\cite{decastro-semi2015}.

  The resolution of SDPs can be tackled through proximal splitting methods~\cite{toh-fistasvd2010} as well as interior point methods~\cite{boyd2004convex}. However, the overall complexity of the latter is polynomial in $O(f_c^{2d})$, where $d$ is the dimension of the domain $\Pos$, which restricts its application to small dimensional problems.
This limitation has led to recent developments~\cite{catala-rank2017} where the authors proposed a relaxed low rank SDP formulation of the BLASSO in order to use a Frank-Wolfe-type method (see below). The resulting method  enjoys the better overall complexity of $O(f_c^d\log(f_c))$ per iteration.

   Finally, note that these SDP-based approaches are restricted to  certain type of forward operators (typically Fourier measurements). In contrast,   grid-based proximal methods as well as Frank-Wolfe  (directly on the BLASSO, see below) can be used for a larger class of operators $\Phi$.

\myparagraph{Optimization over the space of measures} In order to directly solve the BLASSO, one needs to design algorithms that do not use any Hilbertian structure and can instead deal with measures. The benefit is the fact that one can exploit advantageously the continuous setting of the problem (typically moving continuously spikes over the domain). In contrast to fixed spatial or spectral discretization methods,  these algorithms proceed by iteratively adding new spikes, \ie Dirac masses, to the recovered measure.   

The Frank-Wolfe (FW) algorithm~\cite{frank-fw1956} (see Section~\ref{sec:sfw}), also called the Conditional Gradient Method (CGM)~\cite{levitin-constrained1966}, solves  optimization problems of the form $\min_{m \in C}\ f(m)$, where $C$ is a weakly compact convex set of a topological vector space and $f$ is a differentiable convex function (in the case of the BLASSO, $m$ is a Radon measure). It proceeds by iteratively minimizing a linearized version of $f$. No Hilbertian structure is used which makes it well suited to work on the space of Radon measures. It has been proven under a curvature condition on $f$ (which holds on a Banach space for smooth functions having a Lipschitz gradient) that the rate of convergence of this algorithm in the objective function is $O(1/k)$ (see for instance~\cite{demyanov-1970approximate}).
However, it is possible to improve the convergence speed of FW  by replacing the current iterate by any \guillemet{better} candidate $m \in C$ that further decreases the objective function $f$. This simple idea has led to several successful variations of the standard FW algorithm. 
For instance, the authors of~\cite{bredies-inverse2013} proposed a modified Frank-Wolfe algorithm for the BLASSO where the final step updates the amplitudes and positions of spikes by a gradient descent on a non-convex optimization problem. Moving the spikes positions takes advantage of the continuous framework of the problem (the domain $\Pos$ is not discretized) which is the main ingredient that leads to a typical $N$-step convergence observed empirically.  Finally, this approach has later been used in~\cite{boyd-adcg2015} and provides state of the art results in many sparse inverse problems such as  matrix completion  or Single Molecule Localization Microscopy (SMLM)~\cite{SMLM,sage-quantitative2015}. %

\subsection{Other methods for super-resolution}

The Prony method~\cite{prony} and its successors such as MUSIC (MUltiple SIgnal Classification) \cite{schmidt-multiple1986}, ESPRIT (Estimation of Signal Parameters by Rotational Invariance Techniques) \cite{Kailath_1990},   or Matrix Pencil \cite{hua-pencils1990}, are spectral methods which perform spikes localization from low frequency measurements. They do not need any discretization and enable to recover exactly the initial signal in the noiseless case as long as there are enough observations compared to the number of distinct frequencies \cite{liao-music2014}. Extensions to deal with noise have been developped in \cite{cadzow-signal1988,condat-cadzow2015} and stability is known under a minimum separation distance \cite{liao-music2014}. Greedy algorithms constitute another class of popular methods for sparse super-resolution.
The Matching Pursuit (MP)~\cite{mallat-mp1994} adds new spikes by finding the ones that best correlate with the residual. The Orthogonal Matching Pursuit (OMP)~\cite{tropp-omp2008,soussen-omp2013,herzet-ols2014} is similar to MP but imposes that the current estimate of the observations, \ie
$\Phi(\sum_{i=1}^k \amp_i \dirac{\pos_i})$,
is always orthogonal to the residual. Hence, the amplitudes of the Dirac masses are updated  by an orthogonal projection after every support update (\ie addition of a new spike). 
It is noteworthy that there exist many generalizations/variants of OMP. For instance, the results of OMP can be improved with a backtracking step at each iteration, allowing to remove  non reliable spikes  from the support of the reconstructed measure~\cite{huang-backtracking2011}.

These greedy pursuit algorithms can be applied without grid discretization~\cite{jacques2008geometrical} which enables the use of local optimizations over the spikes' positions~\cite{eftekhari2015greed}.
Finally, let us mention the class of nonconvex optimization methods which include the well known 
Iterative Hard Thresholding (IHT)~\cite{davies-it2008,davies-iht2009}

\subsection{Contributions}

Our first set of contributions, detailed in Section~\ref{sec:etaW-laplace}, studies the BLASSO performance in the special case of several types of Laplace transforms. This theoretical study is motivated by the use of these Laplace transform for certain types of fluorescence microscopy imaging devices.  Our main finding is that for positive spikes, these operators can be stably inverted without minimum separation distance. This study makes use of the theoretical tools developed in our previous work~\cite{Denoyelle2017}. 

Our algorithmic contributions are detailed in Section~\ref{sec:sfw}, where we introduce the \ADCG, which is an extension of the initial FW solver proposed in~\cite{bredies-inverse2013}. 
Proposition~\ref{sec:sfw-prop:cvfaible} shows that this algorithm, used to solve the BLASSO, enjoys the same convergence property as the classical Frank-Wolfe algorithm (weak-* convergence with a rate in the objective function of $O(1/k)$).
Our main theoretical contribution is Theorem~\ref{sec:sfw-thm:cvksteps} which proves that our algorithm converges towards the unique solution of the BLASSO in a finite number of iterations.

Section~\ref{sec:microscopy} makes the connection between these two sets of contributions, by showcasing the \adcgshort~algorithm for 3-D PALM/STORM super-resolution fluorescence microscopy. We study its performance for several imaging operators, among which some relies on the inversion of a Laplace transform along the depth axis.  

The code to reproduce the numerical illustrations of this article can be found online at~\url{https://github.com/qdenoyelle}.

\subsection{Notations and Definitions}

This section gathers some useful notations and definitions.

\myparagraph{Ground space and measures}

We frame our theoretical and numerical analysis of the BLASSO on the space of Radon measure over a set $\Pos$.


\begin{defn}[Set $\Pos$ of positions of spikes]\label{def:Pos}
  The set of positions of spikes, denoted $\Pos$, is supposed to be a subset of $\RR^d$ with non-empty interior $\Poso$, or $\TT^d$ with $d\in\NN^*$. 
\end{defn}%

Definition~\ref{def:Pos} covers the particular case of $\Pos=\RR^d$, $\Pos=\TT^d$ or any compact subset with non-empty interior of $\RR^d$.

\begin{defn}[Continuous functions on $\Pos$]\label{def:ContX}
Let $(Y,\norm{\cdot}_Y)$ be a normed space. We denote by 
  $\Cder{}_c(X,Y)$ the space of $Y$-valued continuous functions with compact support, by $\ContX(\Pos,Y)$ the set of continuous functions that vanish at infinity \ie
\eq{%
	\forall \varepsilon>0, \exists K\subset\Pos \mbox{ compact}, \quad \underset{x\in\Pos\setminus K}{\sup} \ \norm{\phi(x)}_Y \leq \varepsilon,
}
and by $\Contk{k}(\Pos,Y)$ the set of $k$-times differentiable functions on $\Pos$.   Note that when $\Pos$ is compact,  $\Cder{}_c(X,Y)$ and $\ContX(\Pos,Y)$ are simply the set $\Cont(\Pos,Y)$ of continuous functions on $\Pos$.
\end{defn}

Now we can define rigorously the space of real bounded Radon measures on $\Pos$.

\begin{defn}[Set $\radon$ of Radon measures]\label{def:radon}
We denote by $\radon$ the set of real bounded Radon measures on $\Pos$ which is the topological dual of $\ContX(\Pos,\RR)$ endowed with $\normLi{\cdot}$ (the supremum norm for functions defined on $\Pos$).  
\end{defn}
By the Riesz representation theorem, $\radon$ is also the set of regular real Borel measures with finite total mass on $\Pos$.
See~\cite{rudin-real1987} for more details on Radon measures.

\myparagraph{Kernels}
This paragraph details the assumptions that we use in the following on the kernel $\phi$. We recall that the operator  $\Phi: \radon\rightarrow \Obs$, which models the acquisition process of the source signal, has the form:
\begin{align}\label{eq-defnPhi}
	\forall m\in\radon, \quad
	\Phi m &\eqdef \int_\Pos \varphi(\pos)\d m(\pos).
\end{align}
The above quantity is well-defined (as a Bochner integral) as soon as $\phi$ is continuous and bounded. In order to apply some results of~\cite{Denoyelle2017}, we add the hypotheses that are summarized below.  

\begin{defn}[Admissible kernels $\phi$]\label{sec:intro-def:admkernel}
We denote by $\kernel{k}$\index{kernel}, the set of admissible kernels of order $k$. A function $\phi:\Pos\to\Obs$ belongs to $\kernel{k}$ if:
\begin{itemize}
\item $\phi\in\Contk{k}(\Pos,\Obs)$,
\item For all $p\in\Obs$, $x\in\Pos\mapsto\dotObs{\phi(\pos)}{p}$ vanishes at infinity,
\item for all $0\leq i\leq k$, $\underset{\pos\in\Pos}{\sup} \normObs{D^i\phi(x)} <+\infty$.
\end{itemize}
where $D^i\phi$ is the $i$-th differential of $\phi$.
\end{defn}

\myparagraph{Operators}

Given $\pos=(\pos_1,\ldots,\pos_N)\in\Poso{}^N$, we denote by $\Phi_{\pos}:\RR^N\rightarrow \Obs$ the linear operator such that:
\begin{align}\label{sec:intro-def:Phix}
	\forall a\in \RR^N, \quad
	\Phi_{\pos}(a) \eqdef \sum_{i=1}^{N} a_i \phi(\pos_i),
\end{align}
and by $\Ga_{\pos}:(\RR^{N}\times\underbrace{\RR^N\times\cdots\times\RR^N}_{d})\rightarrow \Obs$ the linear operator defined by:
\begin{align}\label{sec:intro-def:Gax}
	\forall (a,b_1,\ldots,b_d)\in \RR^N\times(\RR^N)^d, \quad	
	\Ga_{\pos}\begin{pmatrix}a\\b_1\\\vdots\\b_d\end{pmatrix} \eqdef 
	\sum_{i=1}^{N}\left( a_i \phi(\pos_i) + \sum_{j=1}^d b_{j,i} \partial_j \varphi(\pos_i)\right).
\end{align}
We may also write $\Ga_{\pos}=\pa{\Phi_{\pos} \ \pa{\Phi_{\pos}}^{(1)}}$,
where $\pa{\Phi_{\pos}}^{(1)}$ (sometimes denoted  by $\Phi_{\pos}{}'$) stacks all the first order derivatives of $\phi$ for the different positions $\pos_i$. Similarly we define $\pa{\Phi_{\pos}}^{(k)}$ for $k\geq 1$ by stacking all the derivatives of order $k$. Finally, $\Ga_{\pos}^+$ refers to the pseudo-inverse of $\Ga_{\pos}$.

When $d=1$, given $\poscluster\in \Poso$, we denote by $\phiD{k} \in \Obs$ the $k^{th}$ derivative of $\phi$ at $\poscluster$, \textit{i.e.}
\begin{equation}\label{sec:intro-eq:phider}
\phiD{k} \eqdef \phi^{(k)}(\poscluster).
\end{equation}
In particular, $\phiD{0} = \phi(\poscluster)$. Given $k \in \NN$, we then define:
\begin{equation}\label{sec:intro-eq:Fk}
\Fk \eqdef \begin{pmatrix} \phiD{0} & \phiD{1} & \ldots & \phiD{k} \end{pmatrix}.
\end{equation}

\myparagraph{Injectivity Assumption} In order to avoid degeneracy issues we sometimes assume the following injectivity assumption of the operator when restricted to discrete spikes. 

\begin{defn}\label{sec:intro-def:injectivity}
  	Let $\varphi : \Pos\to\Obs$. For all $k\in\NN$, we say that the hypothesis $\injk$ holds at $\poscluster\in\Poso$ if and only if 
  	\begin{equation}
  			\phi\in\kernel{k}
		\text{ and }
		(\phiD{0}, \ldots,\phiD{k}) \text{ are linearly independent in }
		\Obs. 	\tag{$\injk$}
  	\end{equation}
\end{defn}

\myparagraph{Norms}

We use the $\ell^{\infty}$ norm, $\normLiVec{\cdot}$, for vectors of $\RR^N$ or $\RR^{2N}$, whereas the notation $\norm{\cdot}$ refers to an operator norm (on matrices, or bounded linear operators). $\normObs{\cdot}$ is the norm on $\Obs$ associated to the inner product $\dotObs{\cdot}{\cdot}$. $\normLi{\cdot}$ denotes the $\Linf$ norm for functions defined on $\Pos$.

%
%
\section{Reminders on the BLASSO}




\subsection{Recovery of the Support in Presence of Noise}

Let $\posO\in \Poso{}^N$, $\ampO\in (\RR\setminus\{0\})^N$ and $\measO=\sum_{i=1}^N\ampOi \dirac{\posOi}$. The BLASSO is the variational problem
\begin{align}\label{eq:defblasso}
	\umin{m \in \radon} \frac{1}{2} \normObs{\Phi m - \obsw}^2 + \lambda \normTVX{m} \tag{$\blasso$},
\end{align}
where $\obsw\eqdef\Phi\measO+w$ are the noisy observations of a measure composed of a sum of Dirac masses.
The optimality of a measure $m_\la$ for~$\blasso$ is characterized by the fact that the function
\begin{align}
  \label{eq:certifdual}
  \eta_\la\eqdef \Phi^* p_\la \qwhereq p_\la \eqdef \frac{1}{\la}(y-m_\la) 
\end{align}
satisfies $\normLi{\eta_\la}\leq 1$. The function $\eta_\la$ is then called a dual certificate.

When one is interested in the recovery of the support, \ie finding a solution $\meas$ of $\blasso$ composed of exactly the same number of Dirac masses as the initial measure $\measO$, in a small noise regime, an important object is the so-called vanishing derivatives precertificate introduced in~\cite{duval-exact2013}.

\begin{defn}[Vanishing Derivatives Precertificate, \cite{duval-exact2013}]\label{sec:blasso-def:etaV}
 	If $\GaxO$ has full column rank, there is a unique solution to the problem
  \begin{align*}
    \inf\enscond{\normObs{p}}{\forall i=1,\ldots,N, \; (\Phi^*p)(\posOi)=\sign(\ampOi), (\Phi^*p)'(\posOi)=0_{\RR^d}}.
  \end{align*}
  Its solution $\pVV$ is given by 
  \begin{align}\label{eq-vanish-closed-form}
   	\pVV=(\GaxO^{+})^* \begin{pmatrix} \sign(\ampO)  \\ 0_{(\RR^d)^N}\end{pmatrix},
  \end{align}
and we define the vanishing derivatives precertificate as $\etaVV\eqdef\Phi^*\pVV$ ($\GaxO$ is defined in Equation~\ref{sec:intro-def:Gax}).
\end{defn}

One can show that is if $\normLi{\etaVV}\leq 1$ then $\etaVV$ is a a so-called valid certificate, which assures that $\measO$ is a solution to the constrained problem (corresponding to setting $w=0$ and $\la\to0$ in $\blasso$)
\begin{align*}
	\umin{\Phi m=\obsO} \normTVX{m} \qwhereq \obsO\eqdef\Phi\measO \tag{$\bpursuit$}.
\end{align*}
More importantly, if it satisfies a stronger nondegeneracy condition detailed in Definition~\ref{sec:blasso-def:etaVnondegen} below, then $\eta_V$ also ensures the stable recovery of the support in a small noise regime when solving the BLASSO. This result proved in~\cite{duval-exact2013} is stated in Theorem~\ref{sec:blasso-thm:supportrecovery}.

\begin{defn}[Nondegeneracy of $\etaVV$, \cite{duval-exact2013}]\label{sec:blasso-def:etaVnondegen}
We say that $\etaVV$ is \emph{nondegenerate} if
\begin{equation}\label{eq-etaV-nondegen}
\left\{\begin{split}
	\foralls \pos\in \Pos \setminus \bigcup_{i=1}^N\{\posOi\}, \quad \abs{\etaVV(\pos)}&<1,\\
	 \foralls i\in\{1,\ldots,N\}, \quad
	\det(D^2\etaVV(\posOi))&\neq 0.
\end{split}\right.
\end{equation}
\end{defn}

\begin{thm}[Exact Support Recovery, \cite{duval-exact2013}]\label{sec:blasso-thm:supportrecovery}
Assume that $\phi\in\kernel{2}$, $\GaxO$ has full column rank and $\etaVV$ is nondegenerate. Then there exists $C>0$ such that if $(\la,w)\in\RR_+^*\times\Obs$ satisfies:
\eq{\max\pa{\la,\normObs{w}/\la} \leq C,}
then there is a unique solution $\meas$ to $\blasso$ composed of $N$ Dirac masses such that $(\amp,\pos)=\fimpp(\la,w)$ where $\fimpp$ is $\Ccr^{1}$. In particular, by taking the regularization parameter $\lambda = \normObs{w}/C$ proportional to the noise level, one obtains
\eq{
	\normLiVec{(\amp,\pos)-(\ampO,\posO)}=O(\normObs{w}),
}
where $\normLiVec{\cdot}$ is the $\ell^{\infty}$ norm for vectors. 
\end{thm}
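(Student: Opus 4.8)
The plan is to recast the existence of an $N$-spike solution as the solvability of a finite-dimensional nonlinear system, and to apply the implicit function theorem at the noiseless point $(\la,w)=(0,0)$, where we already know the solution is $(\ampO,\posO)$. A measure $\sum_i a_i\dirac{x_i}$ with dual variable $p=\frac{1}{\la}(\obsw-\Phi_x a)$ is a critical point supported on $\{x_i\}$ precisely when $\eta_\la=\Phi^* p$ satisfies the first-order conditions $(\Phi^* p)(x_i)=\sign(\ampOi)$ and $(\Phi^* p)'(x_i)=0$ for every $i$. Stacking these $(d+1)N$ scalar equations and multiplying through by $\la$ to remove the singular factor, I would define
\begin{align*}
F(a,x,\la,w)\eqdef \Ga_x^*\bigl(\Phi_{\posO}\ampO+w-\Phi_x a\bigr)-\la\begin{pmatrix}\sign(\ampO)\\ 0\end{pmatrix},
\end{align*}
so that, by construction, $F(\ampO,\posO,0,0)=0$ because the residual $\Phi_{\posO}\ampO-\Phi_x a$ vanishes there.

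Next I would differentiate $F$ with respect to the spike parameters $(a,x)$ at the base point. Since the residual is zero there, the contribution from differentiating $\Ga_x^*$ drops out, and a direct computation leaves
\begin{align*}
\partial_{(a,x)}F(\ampO,\posO,0,0)=-\GaxO^*\GaxO\begin{pmatrix}\mathrm{Id}_N & 0\\ 0 & \mathrm{diag}(\ampO)\end{pmatrix}
\end{align*}
(shown here for $d=1$; the general case only changes the invertible lower-right block). This operator is invertible: $\GaxO$ has full column rank, so $\GaxO^*\GaxO$ is invertible, while $\mathrm{diag}(\ampO)$ is invertible because each $\ampOi\neq 0$. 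The assumption $\phi\in\kernel{2}$ provides the required regularity — $F$ is affine in $(a,\la,w)$ and continuously differentiable in $x$ since $\Ga_x$ involves $\phi'$ — so the implicit function theorem (in the Banach-space setting, as $w$ ranges over $\Obs$) yields a continuously differentiable map $\fimpp$ with $(\amp,\pos)=\fimpp(\la,w)$ solving $F=0$ near $(0,0)$.

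The delicate step is to upgrade this local critical point into the genuine, unique minimizer of $\blasso$; the implicit function theorem alone only certifies stationarity. Two things must be checked. First, sign consistency, $\sign(\amp)=\sign(\ampO)$, which is immediate by continuity since $\amp\to\ampO$. Second, and this is the real obstacle, the global certificate bound $\normLi{\eta_\la}\leq 1$, saturated only on the recovered support. For this I would establish that $\eta_\la\to\etaVV$ as $(\la,w)\to 0$, and crucially in a topology fine enough — $C^2$ on a fixed neighborhood of $\bigcup_i\{\posOi\}$ — to transfer the nondegeneracy of $\etaVV$: the condition $\det(D^2\etaVV(\posOi))\neq 0$ forces $\abs{\eta_\la}$ to attain a nondegenerate interior maximum equal to $1$ at exactly one point near each $\posOi$, while the strict inequality $\abs{\etaVV}<1$ off the support, combined with uniform convergence, keeps $\abs{\eta_\la}<1$ elsewhere. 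This simultaneously proves optimality, pins the support to exactly $N$ points, and — together with the full column rank of $\GaxO$, which fixes the amplitudes — gives uniqueness.

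Finally, the quantitative statement follows from the $C^1$ regularity of $\fimpp$: one has $\normLiVec{\fimpp(\la,w)-\fimpp(0,0)}=O(|\la|+\normObs{w})$, so the choice $\la=\normObs{w}/C$ yields $\normLiVec{(\amp,\pos)-(\ampO,\posO)}=O(\normObs{w})$. I expect the main difficulty to be the certificate step: promoting the implicit stationary point to the global BLASSO solution requires controlling $\eta_\la-\etaVV$ uniformly in a $C^2$ sense, strong enough to preserve both the strict interior bound and the nondegenerate Hessian structure of $\etaVV$, whereas the remaining ingredients are the standard implicit-function-theorem machinery.
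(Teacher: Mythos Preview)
The paper does not actually prove this theorem: it is stated in the ``Reminders on the BLASSO'' section and attributed to~\cite{duval-exact2013}, with no proof given here. So there is no in-paper argument to compare against.

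That said, your proposal is the standard and correct strategy, and it is essentially the one carried out in the cited reference. The reduction to a finite-dimensional system $F(a,x,\la,w)=0$, the computation of the Jacobian $-\GaxO^*\GaxO\,\mathrm{diag}(I,\mathrm{diag}(\ampO))$ at the base point, and the use of the implicit function theorem are exactly right. You have also correctly identified the genuinely nontrivial part: upgrading the local stationary point to the unique global minimizer by proving $\eta_\la\to\etaVV$ in $\Cder{2}$ and exploiting the nondegeneracy conditions~\eqref{eq-etaV-nondegen} to transfer both the strict saturation bound away from the support and the nondegenerate Hessian at the $\posOi$. One small refinement worth noting: to get the convergence $\eta_\la\to\etaVV$ (rather than merely $\eta_\la\to$ some certificate), you need to argue that $p_\la\to\pVV$, which uses the specific closed-form expression~\eqref{eq-vanish-closed-form} together with the implicit function expansion of $(\amp,\pos)$; this is straightforward but should be made explicit.
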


Figure~\ref{fig:EtaV_microscopy} displays some example of $\etaVV$ associated to several $\Phi$ operators for 3-D super-resolution fluorescence microscopy. This shows that for these inverse problems, the BLASSO stably recovers the support of the input measure if the noise level is not too high.

\subsection{The Super-Resolution Problem}

In this section, $\Pos$ is considered to be $1$-dimensional and we now tackle the super-resolution problem in presence of noise using the BLASSO. In this setting, we assume that the Dirac masses of the initial measure have positive amplitudes and cluster at some point $\poscluster\in \Poso$. We parametrize this cluster as
\eq{
	\meastO\eqdef\sum_{i=1}^N \ampOi \dirac{\poscluster+\postOi} \qwhereq \ampOi>0, \ \poszOi\in\RR,
}
and where the parameter $t>0$ controls the separation distance between the spikes of the input measure.

In~\cite{Denoyelle2017}, the authors proved that the recovery of the support in presence of noise in the limit $t\to0$ is controlled by the $2N-1$ vanishing derivatives precertificate.

\begin{prop}[$2N-1$ Vanishing Derivatives Precertificate,~\cite{Denoyelle2017}]\label{sec:blasso-def:etaW}
	If $\injdn$ holds at $\poscluster$ (see Definiton~\ref{sec:intro-def:injectivity}), there is a unique solution to the problem
  \begin{align*}
    \inf\enscond{\normObs{p}}{(\Phi^*p)(\poscluster)=1, (\Phi^*p)'(\poscluster)=0, \ldots , (\Phi^*p)^{(2N-1)}(\poscluster)=0}.
  \end{align*}
  We denote by $\pW$ its solution, given by 
  \begin{equation}\label{def-pW}
    \pW=(\Fdn^{+})^*\dirac{2N}   
	\qwhereq
  \dirac{2N} \eqdef (1,0,\ldots,0)^T \in \RR^{2N}, 
\end{equation}
and we define the $2N-1$ vanishing derivatives precertificate as $\etaW\eqdef\Phi^*\pW$ (see Equation~\ref{sec:intro-eq:Fk} for the definition of $\Fdn$).
\end{prop}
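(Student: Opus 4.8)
The plan is to read the statement as a classical minimum-norm interpolation problem in the Hilbert space $\Obs$ and to solve it with the projection theorem, so that existence, uniqueness and the closed form all come out at once.

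\textbf{Reformulation.} First I would rewrite the $2N$ pointwise constraints as a single linear equation. Since $\injdn$ includes $\phi\in\kernel{2N-1}$, the differentials $D^k\phi$ exist and are uniformly bounded for $0\le k\le 2N-1$, so for every $p\in\Obs$ the scalar function $x\mapsto(\Phi^*p)(x)=\dotObs{\phi(x)}{p}$ is $(2N-1)$-times differentiable with $(\Phi^*p)^{(k)}(\poscluster)=\dotObs{\phiD{k}}{p}$; the interchange of differentiation with the continuous $\Obs$-pairing is legitimate precisely because of the uniform boundedness of the $D^k\phi$. Hence the interpolation conditions are equivalent to $\Fdn^*p=\dirac{2N}$, where $\Fdn^*\colon\Obs\to\RR^{2N}$, $\Fdn^*p=(\dotObs{\phiD{0}}{p},\ldots,\dotObs{\phiD{2N-1}}{p})^{T}$, is the adjoint of $\Fdn$, and the problem reads $\inf\{\normObs{p}:\Fdn^*p=\dirac{2N}\}$.

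\textbf{Feasibility and well-posedness.} Next I would use $\injdn$, which asserts that $\phiD{0},\ldots,\phiD{2N-1}$ are linearly independent, i.e.\ $\Fdn$ has full column rank. This makes the Gram matrix $\Fdn^*\Fdn\in\RR^{2N\times 2N}$ invertible and $\Fdn^*$ surjective onto $\RR^{2N}$, so the feasible set $A\eqdef\{p\in\Obs:\Fdn^*p=\dirac{2N}\}$ is a \emph{non-empty} affine subspace, and it is closed since $\Fdn^*$ is continuous. The minimizer is then the orthogonal projection of $0$ onto the non-empty closed convex set $A$, which exists and is unique by the projection theorem in $\Obs$; this settles existence and uniqueness.

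\textbf{Closed form.} Finally I would recover the formula from the projection characterization: $\pW$ is orthogonal to the direction space $A-A=\ker\Fdn^*=(\mathrm{Ran}\,\Fdn)^{\perp}$, hence $\pW\in\mathrm{Ran}\,\Fdn$ (a closed finite-dimensional subspace), so $\pW=\Fdn c$ for some $c\in\RR^{2N}$. Substituting into $\Fdn^*\pW=\dirac{2N}$ gives $\Fdn^*\Fdn\,c=\dirac{2N}$, whence $c=(\Fdn^*\Fdn)^{-1}\dirac{2N}$ and $\pW=\Fdn(\Fdn^*\Fdn)^{-1}\dirac{2N}$. Using $\Fdn^{+}=(\Fdn^*\Fdn)^{-1}\Fdn^*$ and the symmetry of $(\Fdn^*\Fdn)^{-1}$, this is exactly $(\Fdn^{+})^{*}\dirac{2N}$, as claimed.

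The computations are routine; the content lies in the functional-analytic set-up. The step needing the most care is the reformulation (justifying differentiation of $\Phi^*p$ under the $\Obs$-pairing and identifying the derivatives with $\dotObs{\phiD{k}}{p}$), together with observing that $\injdn$ is exactly what is needed to make the feasible set non-empty (surjectivity of $\Fdn^*$, so the infimum is not over the empty set) and the closed form meaningful (invertibility of the Gram matrix). Without linear independence both of these can fail, so the hypothesis is not cosmetic.
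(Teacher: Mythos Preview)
Your argument is correct and is the standard minimum-norm interpolation argument in a Hilbert space. Note, however, that the paper does not actually supply a proof of this proposition: it is quoted as a result from~\cite{Denoyelle2017} in the ``Reminders on the BLASSO'' section, so there is no in-paper proof to compare against. Your write-up (recast the constraints as $\Fdn^*p=\dirac{2N}$, use $\injdn$ to get full column rank hence non-empty feasibility, invoke the projection theorem, and read off $\pW=\Fdn(\Fdn^*\Fdn)^{-1}\dirac{2N}=(\Fdn^{+})^*\dirac{2N}$) is exactly the expected derivation and would serve as a self-contained proof here.
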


Figure~\ref{sec:blasso-fig:etaWgaussian} shows $\etaW$ in the case of a Gaussian convolution kernel.

\begin{figure}[!htb]
\centering
\subfigure[$N=1$]{\includegraphics[width=0.23\linewidth]{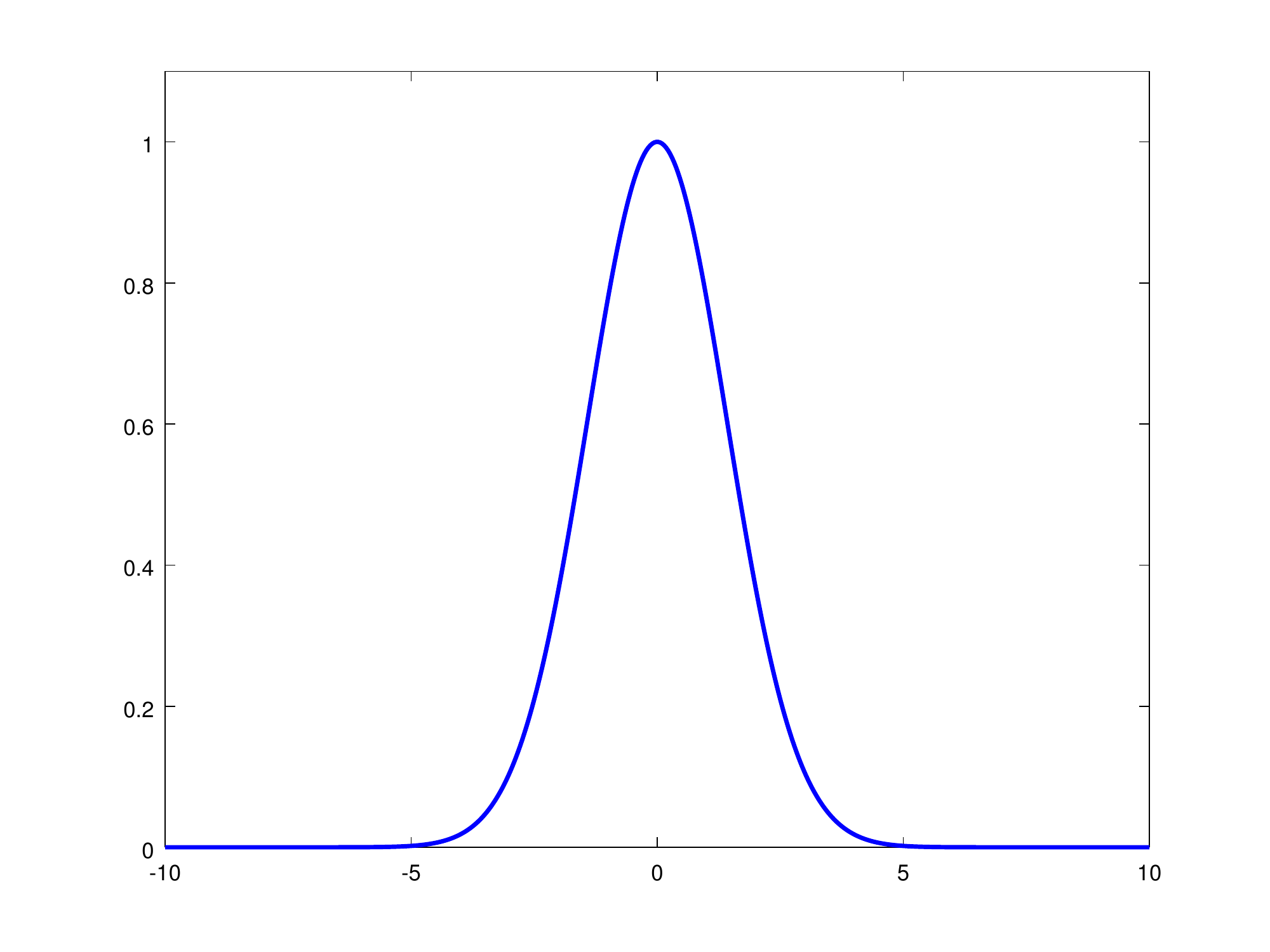}}
\subfigure[$N=2$]{\includegraphics[width=0.23\linewidth]{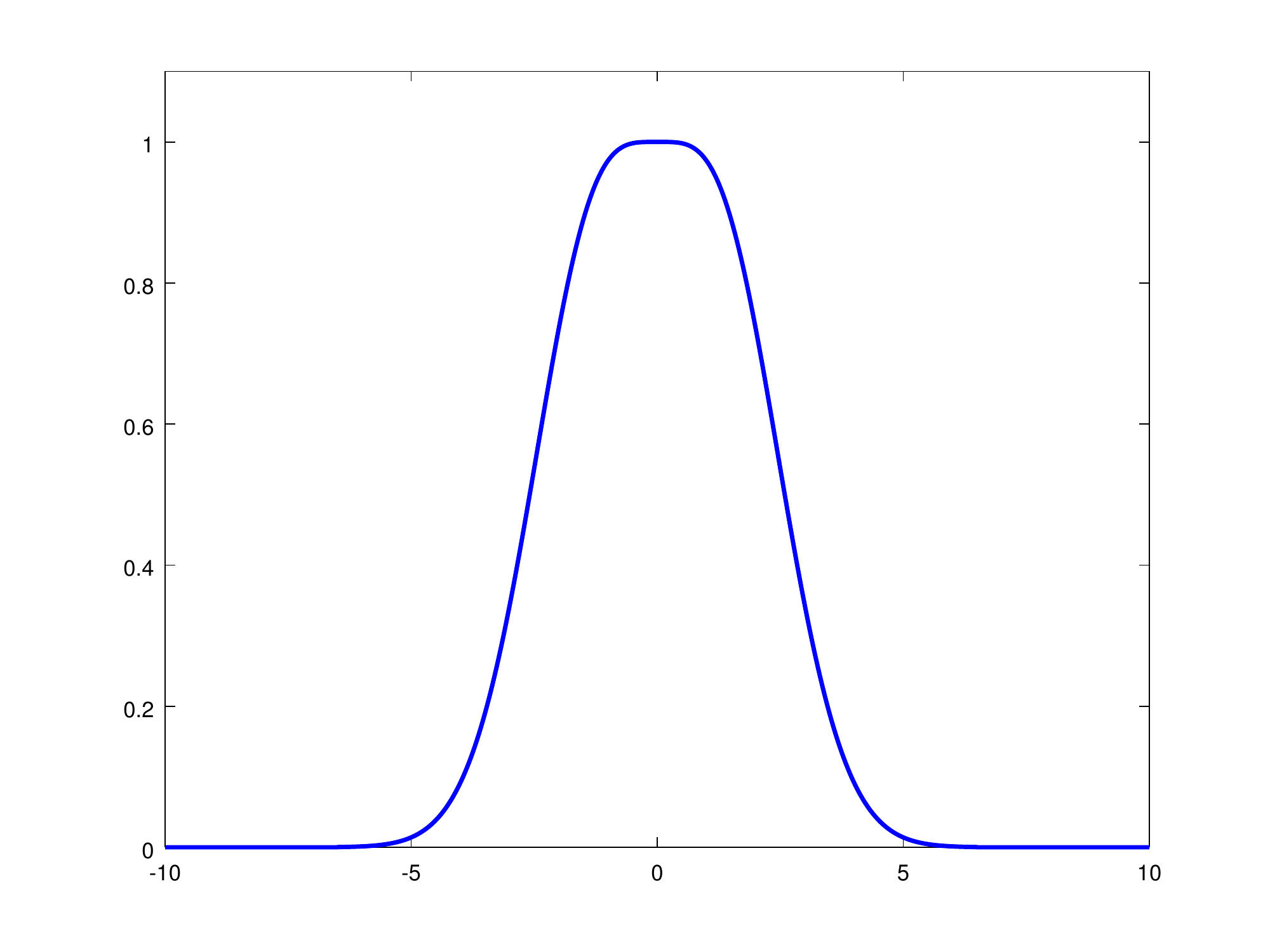}}
\subfigure[$N=4$]{\includegraphics[width=0.23\linewidth]{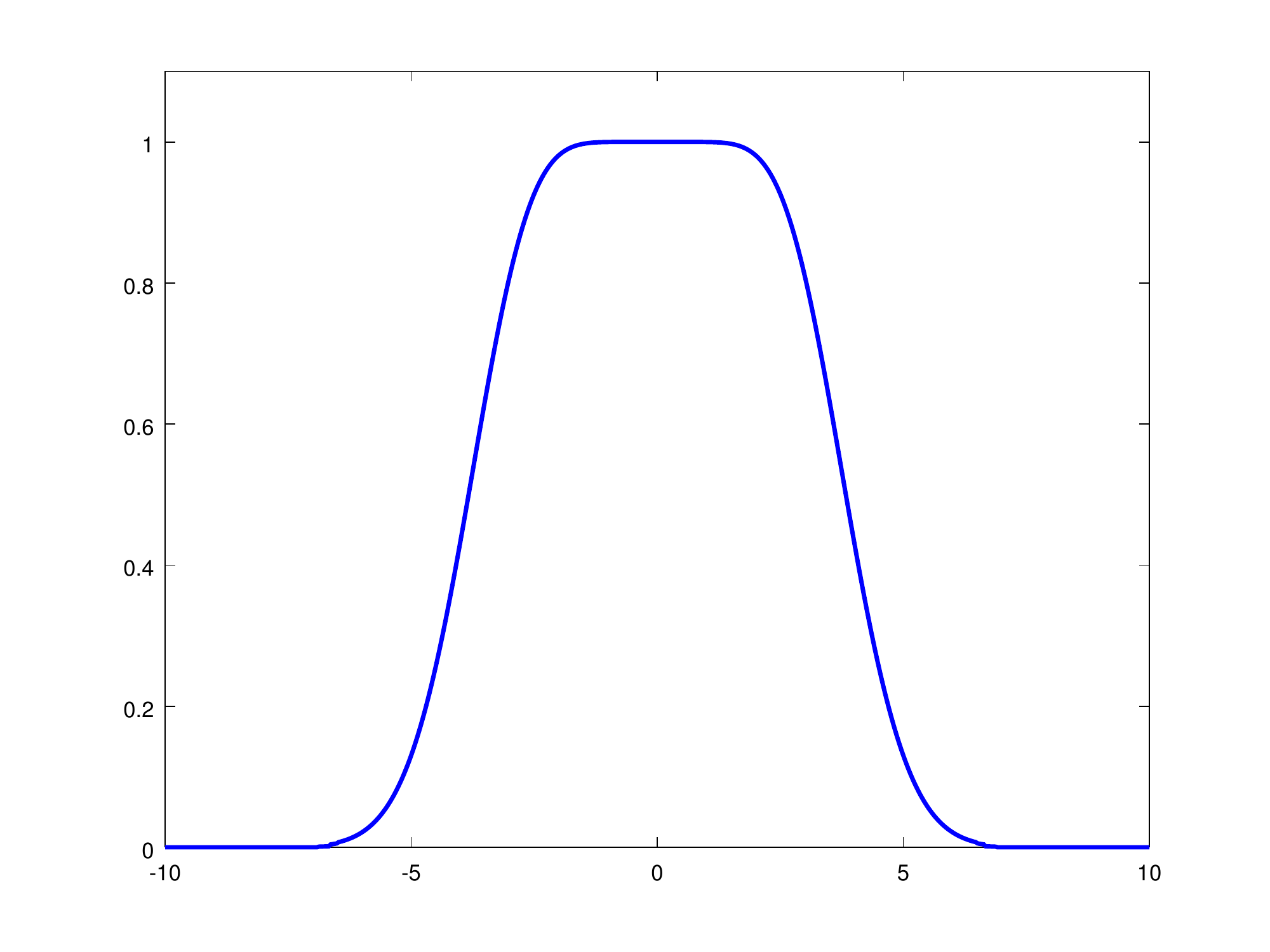}}
\subfigure[$N=7$]{\includegraphics[width=0.23\linewidth]{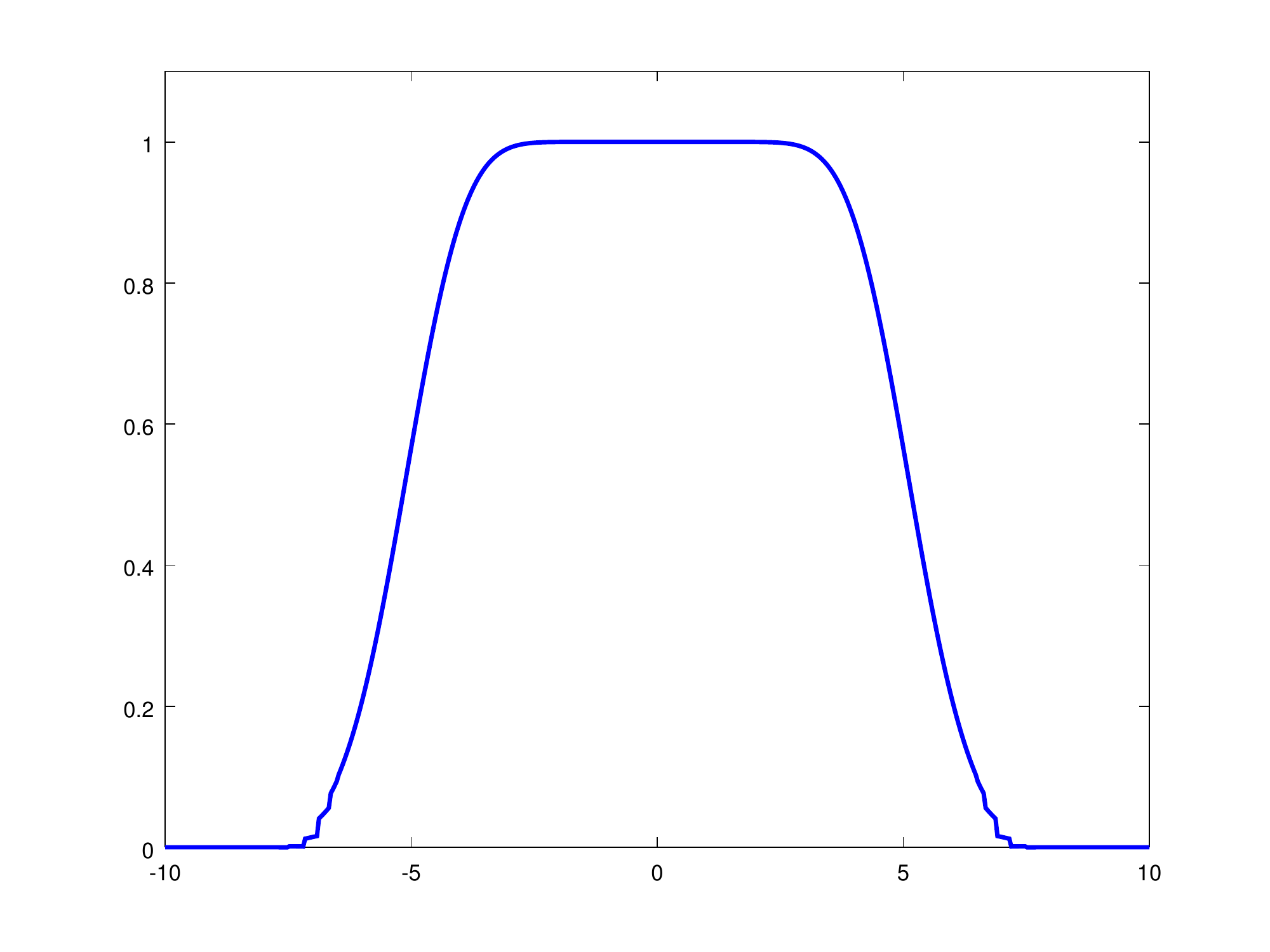}}\caption{\label{sec:blasso-fig:etaWgaussian}$\etaW$ for a Gaussian convolution ($x\in\RR$, $\phi(x)=e^{-\frac{(\cdot-x)^2}{2\sigma^2}}$) for several numbers of spikes and $\sigma=1$.}
\end{figure}

\begin{rem}
From Proposition~\ref{sec:blasso-def:etaW}, one can easily see that $\etaW$ can equivalently be written as
\begin{align}\label{sec:blasso-eq:etaWdefbis}
	\forall\pos\in\Pos,\quad\etaW(\pos)=\sum_{k=0}^{2N-1} \al_k\partial^{(k)}_2\Co(\pos,\poscluster),
\end{align}
where $\Co$ is the correlation kernel associated to the correlation operator $\Phi^*\Phi$, namely $\Co(\pos,\pos')=\dotObs{\phi(\pos)}{\phi(\pos')}$, and the coefficients $\al_k$ are defined by the equations 
\begin{align}\label{sec:blasso-eq:etaWequations}
 \forall k\in\{0,\ldots,2N-1\},\quad\etaW^{(k)}(\poscluster)=\delta_0^k.
\end{align}
\end{rem}

If $\etaW$ satisfies some nondegeneracy property (see Definition~\ref{sec:blasso-def:etaWnondegen}) then one can prove that the recovery of the support in a small noise regime when $t\to0$ is possible. Theorem~\ref{sec:blasso-thm:superresol} (see~\cite{Denoyelle2017}) makes this  statement precise by quantifying the scaling between the noise level and the separation $t$ to ensure the recovery.

\begin{defn}\label{sec:blasso-def:etaWnondegen}
 	Assume that $\injdn$ holds at $\poscluster$ and $\phi\in\kernel{2N}$. We say that $\etaW$ is $(2N-1)$-nondegenerate if $\etaW^{(2N)}(\poscluster)\neq 0$ and for all $\pos\in \Pos\setminus\{\poscluster\}$, $|\etaW(\pos)| <1$.
\end{defn}

\begin{thm}\label{sec:blasso-thm:superresol}
	Suppose that $\varphi\in\kernel{2N+1}$ and that $\etaW$ is $(2N-1)$-nondegenerate. 
	Then there exist positive constants $t_0,C,M$ (depending only on $\phi$, $\ampO$ and $\poszO$) such that 
	for all $0<t<t_0$, 
	for all $(\la,w)\in \ball{0}{C t^{2N-1}}$ with $\normObs{w}/\la\leq C$, 
\begin{itemize}
  \item the BLASSO has a unique solution,
  \item that solution has exactly $N$ spikes, and it is of the form $m_{a,\poscluster+tz}$, with $(\amp,\posz)=g(\la,w)$ (where $g$ is a $\Cder{2N}$ function),
  \item the following inequality holds
    \eq{
		\normLiVec{(\amp,\posz)-(\ampO,\poszO)}\leq\constdg\pa{\frac{|\la|}{t^{2N-1}}+\frac{\normObs{w}}{t^{2N-1}}}.
	}
\end{itemize}
\end{thm}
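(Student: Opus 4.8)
The plan is to reduce the statement to a quantitative implicit function theorem in rescaled coordinates, and to control, through the limiting precertificate $\etaW$, the way the relevant linear systems degenerate as the cluster shrinks ($t\to0$). This is the degenerate analogue of Theorem~\ref{sec:blasso-thm:supportrecovery}: the nondegenerate source condition there fails in the limit $t\to0$ because $\GaxO=\Ga_{\poscluster+t\poszO}$ loses rank, so one cannot apply that theorem directly and must instead exploit the structure encoded by $\etaW$.

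First I would parametrize candidate $N$-spike solutions by $m_{a,\poscluster+tz}$ and write the first-order optimality conditions of the BLASSO as a nonlinear system $F(a,z,\la,w,t)=0$, collecting the amplitude stationarity $\Phi_{\poscluster+tz}^*(\Phi_{\poscluster+tz}a-\obsw)+\la\sign(a)=0$ and the position stationarity $(\Phi_{\poscluster+tz}')^*(\Phi_{\poscluster+tz}a-\obsw)=0$. At $(\la,w)=(0,0)$ this system is solved by $(\ampO,\poszO)$ for every $t$, since in the noiseless case positive spikes are recovered exactly for any separation; this provides the reference point for the implicit function theorem, and the whole theorem amounts to continuing this solution in $(\la,w)$ and certifying that it is the global minimizer.

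The crux is that $\Ga_{\poscluster+tz}$ becomes rank-deficient as $t\to0$, since the vectors $\phi(\poscluster+tz_i)$ and $\phi'(\poscluster+tz_i)$ all collapse onto $\phi(\poscluster),\phi'(\poscluster)$. To resolve this I would Taylor-expand these vectors to order $2N-1$, using $\phi\in\kernel{2N+1}$, and factor out the matching powers of $t$ (a confluent-Vandermonde rescaling), so that the rescaled Gram matrix converges to a nondegenerate matrix built from $\Fdn$; the hypothesis $\injdn$ at $\poscluster$ guarantees that $\Fdn$ has full column rank and hence that the limit is invertible. This is exactly where the power $t^{2N-1}$ appears: the determinant of the rescaled system carries a factor of order $t^{2N-1}$, so the Jacobian $\partial_{(a,z)}F$ has an inverse whose operator norm blows up like $t^{-(2N-1)}$. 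In parallel I would show that the vanishing derivatives precertificate $\etaVV$ of the clustered measure $\meastO$, read at scale $t$ about $\poscluster$, converges to $\etaW$: its $2N$ interpolation conditions (value $1$ and vanishing first derivative at each of the $N$ merging spikes) collapse precisely into the conditions $\etaW^{(k)}(\poscluster)=\delta_0^k$ defining $\etaW$.

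From there I would conclude in two moves. Using $\etaW^{(2N)}(\poscluster)\neq0$ and $|\etaW(\pos)|<1$ for $\pos\neq\poscluster$, I would transfer these strict inequalities, with quantitative margins, to the dual certificate $\eta_\la=\Phi^*p_\la$ of the constructed solution, so that $|\eta_\la(\pos)|<1$ off the support and $\normLi{\eta_\la}\leq1$; this yields existence, uniqueness, and exactness of the $N$-spike support. Then the implicit function theorem applied to $F=0$ produces the $\Cder{2N}$ map $g$ — the extra differentiability $\phi\in\kernel{2N+1}$ being needed to gain these orders, since differentiating the position equations repeatedly calls on derivatives of $\phi'$ up to order $2N$ — and the stated estimate follows by bounding $\normLiVec{g(\la,w)-(\ampO,\poszO)}$ through the inverse Jacobian, whose norm scales like $t^{-(2N-1)}$, producing the factor $1/t^{2N-1}$ in front of $|\la|$ and $\normObs{w}$. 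I expect the main obstacle to be the uniform-in-$t$ control of the certificate: one must perform exactly the right confluent rescaling to extract the $\Fdn$-governed limit, quantify every remainder in the Taylor expansions, and — most delicately — prove that $\eta_\la$ stays strictly below $1$ on all of $\Pos$, which forces a separate treatment of a near-diagonal zone (handled by the local expansion around $\etaW$) and a far zone (handled by $|\etaW|<1$ together with the decay and boundedness of $\phi$ encoded in $\phi\in\kernel{2N+1}$).
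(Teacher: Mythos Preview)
The paper does not prove this theorem; it is quoted from the authors' earlier work~\cite{Denoyelle2017} (see the sentence preceding the statement), so there is no in-paper proof to compare against. Your outline is essentially the strategy of that reference: the confluent-Vandermonde rescaling of $\Ga_{\poscluster+tz}$ through $\Fdn$ to regularize the degenerating Gram system, the convergence of $\etaVV$ for $\meastO$ toward $\etaW$ as $t\to0$, the implicit function theorem applied to the first-order optimality system in rescaled variables (producing the $t^{-(2N-1)}$ blow-up of the inverse Jacobian), and the near/far decomposition for the global bound $\normLi{\eta_\la}\leq 1$. Your identification of the main technical burden --- uniform-in-$t$ control of the dual certificate, split into a local zone governed by the Taylor expansion about $\etaW$ and a far zone governed by $|\etaW|<1$ and the decay in $\kernel{2N+1}$ --- is accurate.
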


In the next section, we prove that the main assumption of Theorem~\ref{sec:blasso-thm:superresol} (the nondegeneracy of $\etaW$) is satisfied for some operators $\Phi$ associated to Laplace measurements.

%
%
\section{BLASSO for Laplace Inversion}\label{sec:etaW-laplace}

Most existing theoretical studies of super-resolution are focussed on translation-invariant operator $\Phi$ (convolution or Fourier measurements), see Section~\ref{sec-intro-blasso-pw}. In contrast, this section presents new results for one of the most fundamental non-translation invariant operator: the Laplace transform (and variants). 

The behavior of the Laplace transform is radically different from the one of the Fourier transform, and understanding the impact of the lack of translation invariance on super-resolution is relevant for many applications in imaging, including those considered in Section~\ref{sec:microscopy}.
A first argument in favor of the BLASSO for the Laplace transform is the study provided in~\cite{duval-ndsc2017}. It essentially shows that the recovery of $N$ positive spikes with stability of the support is possible using at least $2N$ measurements, regardless of the spacing of the spikes (and the spacings of the samples).
The stability is asserted by showing that $\etaV$ and $\etaW$ are nondegenerate, using abstract T-systems arguments.

Our strategy here is different, as we provide closed form expressions for $\etaW$ for these operators in order to show its nondegeneracy. The results presented here are thus complementary to those of~\cite{duval-ndsc2017}, providing additional theoretical guarantees which backup our numerical observations. The main differences are 
\begin{itemize}
  \item we provide closed form expressions to $\etaW$,
  \item some of the impulse responses we consider are $L^2$-normalized, a case which is not covered by the theory of~\cite{duval-ndsc2017},
  \item we cannot deal with arbitrary samplings $\mu$, contrary to~\cite{duval-ndsc2017}.
\end{itemize}

In this section, we suppose that $N$ spikes are clustered at the position $\poscluster\in\Poso$ (which appears in the following results because of the non translation invariance of the kernel). 

In the next section, we first detail the different continuous operators considered. Then, Section~\ref{subsec:etaW-unnorm} gives explicit formulas for $\etaW$ in two different setups and shows that $\eta_W$ is $(2N-1)$-nondegenerate. Finally, Section~\ref{subsec-laplaceinversion} provides some numerical material concerning $\etaW$ when the continuous kernels are approximated by a sampling.

\subsection{Laplace Operators}\label{subsec:laplace-models}

We suppose in  this section that $\Pos=[\xmin,\xmax]\subset \RR_+^*$ is a compact interval, and that $\Obs = \Ldeux(\RR_+,\mu)$ for some Radon measure $\mu$ on $\RR_+$. A generic Laplace measurement kernel is defined as
\begin{align}\label{sec:basso-eq:phi-laplace}
	 \forall \pos\in\Pos,\quad\phi(\pos) \eqdef \pa{s \mapsto \xi(\pos) e^{-s\pos}} \in \Obs.
 \end{align}
 This choice ensures that $\phi$ defines a valid operator $\Phi$ for all the  the Laplace-like transform models presented below, provided $e^{-\xmin s}\d\mu(s)$ has sufficiently many finite moments (in the following we require finite moments of order $4N-1$).
The kernel is parametrized by a positive Radon measure  $\mu$ on $\Pos$ (which models the sampling pattern) and a non-negative weighting function  $\xi \in \Cont(\Pos,\RR)$ (which takes care of the normalization of the measurement). 
The adjoint operator is thus defined as
\eq{
	(\Phi^*p)(\pos) = \xi(\pos) \int_{\RR_+} e^{-s\pos} p(s) \d \mu(s).
}

The choice of $\mu$ is let  to the experimentalist and corresponds to the way samples are chosen.
A discrete measure $\mu = \sum_{k=1}^K \mu_k \de_{s_k}$ corresponds to using a finite set of samples values $s_k$. In this case, one can equivalently consider finite-dimensional observations $\Obs=\RR^K$ and define $\phi(\pos) \eqdef ( \xi(\pos)\mu_k e^{-s_k \pos} )_{k=1}^K \in \Obs$.
A continuous measure $\d\mu(s)=h_\mu(s) \d s$ is a mathematical idealization, where a high value of $h_\mu(s)$ indicates that a high number of measurements have been taken for the  index $s$ (or equivalently that there is less noise for this measurement). On contrast, a value $\mu(s)=0$ indicates that this measurement is not available. 

In contrast, $\xi$ can be freely chosen but strongly impacts the BLASSO problem by weighting the contribution of each position. The design of such a spatially-varying weighting is crucial (and non trivial) here because the operator $\Phi$ is not translation-invariant.    
The most frequent normalization for LASSO-type problems is
\begin{equation}\label{eq-normalization}
	\xi(\pos)^2 = \frac{1}{ \int_{\RR_+} e^{-2 s\pos} \d\mu(s)  },
\end{equation}
which guarantees that $\normObs{\phi(\pos)}=1$ for all $\pos\in\Pos$. See Section~\ref{par:L2-norm-laplace} for more details for this normalization.

Note that both $\mu$ and $\xi$ can be independently chosen, since they operate separately on the input and output variables $\pos$ and $s$. 

\myparagraph{Correlation kernel} The properties of the BLASSO problem (and also the implementation of BLASSO solvers) only depend on the correlation operator $\Phi^*\Phi$ (rather than on the operator $\Phi$ itself). This operator reads $(\Phi^*\Phi m)(x) = \int_X \Co(x,x') \d m(x')$ where $\Co$ is a symmetric positive kernel. 
For Laplace-type operators, it reads
\eq{
	\forall\pos,\pos'\in\Pos,\quad\Co(\pos,\pos')  = \xi(\pos)\xi(\pos') \int_{\RR_+} e^{-(\pos+\pos')s} \d \mu(s).
}
The choice of normalization~\eqref{eq-normalization}  ensures that $\Co(\pos,\pos)=1$. 

\if 0 
{\color{blue}For instance, for a sampling density of the form $h_\mu(s) = e^{-\al s}$ for some $\al>0$, one obtains
\begin{equation}\label{eq-normalized-laplace}
	\xi(\pos) = \sqrt{2(x+\al)}
	\qandq
	\Co(x,x') = 2 \frac{\sqrt{ (x+\al)(x'+\al)}}{x+x'+2\al}.
\end{equation}
Note that the parameter $\al$ has only the effect of shifting all Dirac location, i.e. making the change of variable $\pos \mapsto x-\al$ is equivalent to replacing $\al$ by $0$.
}
\fi

We now detail in the following sections several particular cases covered by Equation~\ref{sec:basso-eq:phi-laplace} and study the associated $\etaW$.

\subsection{Preliminaries Results}

This section gathers preliminary results useful for the computation of $\etaW$.

One begins with two elementary lemmas. Their proofs are left to the reader.
The first one is a simple consequence of the Faa di Bruno formula.
\begin{lem}
\label{lem:faadibruno}
  Let $I$, $I'\subset \RR$ be open intervals, and $h:I'\rightarrow I$ be a smooth diffeomorphism.
  Let $\poscluster\in I$, $\tcluster:=h^{-1}(\poscluster)\in I'$, and let $\eta: I\rightarrow \RR$ be a smooth function. 
  Then $\eta$ satisfies
  \begin{equation}
    \eta(\poscluster)=1, \ \eta'(\poscluster)=0, \ldots, \eta^{(2N-1)}(\poscluster)=0,
  \end{equation}
  if and only if $\nu\eqdef \eta\circ h$ satisfies 
  \begin{equation}
    \nu(\tcluster)=1, \ \nu'(\tcluster)=0, \ldots, \nu^{(2N-1)}(\tcluster)=0.\label{eq:laplacefaadibruno}
  \end{equation}
  Moreover, in that case, $\nu^{(2N)}(\tcluster)=\eta^{(2N)}(\poscluster)(h'(\tcluster))^{2N}$.
\end{lem}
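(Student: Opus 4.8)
The plan is to invoke the Fa\`a di Bruno formula, which expresses the derivatives of a composition $\nu = \eta \circ h$ in terms of the derivatives of $\eta$ and $h$. Specifically, for each $k \geq 1$,
\begin{equation}
  \nu^{(k)}(\tcluster) = \sum_{\pi} \eta^{(|\pi|)}(\poscluster) \prod_{B \in \pi} h^{(|B|)}(\tcluster),
\end{equation}
where the sum runs over all partitions $\pi$ of $\{1,\ldots,k\}$ and $|\pi|$ denotes the number of blocks. The key structural observation is that the derivative $\nu^{(k)}(\tcluster)$ is a linear combination of $\eta^{(j)}(\poscluster)$ for $1 \leq j \leq k$, with coefficients depending only on the derivatives of $h$ at $\tcluster$. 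Crucially, the coefficient of the top-order term $\eta^{(k)}(\poscluster)$ is $(h'(\tcluster))^k$, coming from the unique partition of $\{1,\ldots,k\}$ into $k$ singletons, and since $h$ is a diffeomorphism we have $h'(\tcluster) \neq 0$, so this leading coefficient is nonzero.

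First I would establish the forward implication by induction on $k$. Suppose $\eta(\poscluster)=1$ and $\eta^{(j)}(\poscluster)=0$ for $1 \leq j \leq 2N-1$. The case $k=0$ is immediate since $\nu(\tcluster) = \eta(h(\tcluster)) = \eta(\poscluster) = 1$. For $1 \leq k \leq 2N-1$, every term in the Fa\`a di Bruno sum contains a factor $\eta^{(|\pi|)}(\poscluster)$ with $1 \leq |\pi| \leq k \leq 2N-1$, which vanishes by hypothesis; hence $\nu^{(k)}(\tcluster) = 0$. The reverse implication follows the same induction but now exploits the triangular (in fact invertible) structure: reading off $\nu^{(k)}(\tcluster)$, I can solve for $\eta^{(k)}(\poscluster)$ since its coefficient $(h'(\tcluster))^k$ is nonzero, and by the inductive hypothesis all lower-order $\eta^{(j)}(\poscluster)$ already vanish, forcing $\eta^{(k)}(\poscluster) = \nu^{(k)}(\tcluster)/(h'(\tcluster))^k = 0$. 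Equivalently, one observes that the linear map sending $(\eta(\poscluster), \eta'(\poscluster), \ldots, \eta^{(2N-1)}(\poscluster))$ to $(\nu(\tcluster), \nu'(\tcluster), \ldots, \nu^{(2N-1)}(\tcluster))$ is represented by a triangular matrix with nonzero diagonal entries $(h'(\tcluster))^k$, hence is invertible, so the two sets of conditions are equivalent.

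For the final assertion about the top derivative, I would apply the Fa\`a di Bruno formula at order $2N$. Isolating the singleton-partition term gives $\nu^{(2N)}(\tcluster) = \eta^{(2N)}(\poscluster)(h'(\tcluster))^{2N} + \sum_{|\pi| < 2N} \eta^{(|\pi|)}(\poscluster)\prod_{B \in \pi} h^{(|B|)}(\tcluster)$, and in the latter regime we are under the hypotheses $\eta^{(j)}(\poscluster)=0$ for $1 \leq j \leq 2N-1$ together with $\eta(\poscluster)=1$; however the partitions with $|\pi| < 2N$ but $|\pi| \geq 1$ all vanish, while the partition into a single block of size $2N$ would contribute $\eta^{(1)}(\poscluster) h^{(2N)}(\tcluster) = 0$. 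What remains is precisely $\nu^{(2N)}(\tcluster) = \eta^{(2N)}(\poscluster)(h'(\tcluster))^{2N}$, as claimed.

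I do not anticipate a genuine obstacle here, as the lemma is essentially a bookkeeping exercise: the authors themselves state that the proof is left to the reader. The only point requiring care is verifying that the coefficient of $\eta^{(k)}(\poscluster)$ in $\nu^{(k)}(\tcluster)$ is exactly $(h'(\tcluster))^k$ and that all other contributions to orders $\leq 2N-1$ involve strictly lower-order derivatives of $\eta$; this triangular structure, combined with $h'(\tcluster) \neq 0$, is what makes both implications and the final formula fall out cleanly.
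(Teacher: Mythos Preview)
Your proposal is correct and follows exactly the route the paper indicates: the authors leave the proof to the reader, stating only that it ``is a simple consequence of the Faa di Bruno formula,'' and you have supplied precisely that argument, exploiting the triangular structure with nonzero diagonal $(h'(\tcluster))^k$ to get the equivalence and then isolating the singleton-partition term at order $2N$ for the final identity.
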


The next one follows from the general Leibniz rule.
\begin{lem}
\label{lem:leibniz}
  Let $I$ be an open interval, $\tcluster\in I$ and let $g:I\rightarrow \RR$, $\eta: I\rightarrow \RR$ be two smooth functions.
 If $\eta$ satisfies:
  \begin{equation}
    \eta(\poscluster)=1, \ \eta'(\poscluster)=0, \ldots, \eta^{(2N-1)}(\poscluster)=0,
  \end{equation}
then $P\eqdef \eta \times g$ satisfies:
  \begin{equation}
    P(\poscluster)=g(\poscluster), \ P'(\poscluster)=g'(\poscluster), \ldots,\ P^{(2N-1)}(\poscluster)=g^{(2N-1)}(\poscluster).
  \end{equation}
  In particular, if $P\in \RR_{2N-1}[T]$, then $P$ is the Taylor expansion of $g$ at $\poscluster$ of order $2N-1$, and $\etaW^{(2N)}(\poscluster)=-g^{(2N)}(\poscluster)/g(\poscluster)$ provided that $g(\poscluster)\neq 0$.
\end{lem}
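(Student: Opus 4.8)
The plan is to apply the general Leibniz rule for the derivatives of a product, combined with the interpolation conditions satisfied by $\eta$. Recall that for two smooth functions $\eta$ and $g$, the $n$-th derivative of their product is
\begin{equation}
  (\eta g)^{(n)}(\poscluster) = \sum_{k=0}^{n} \binom{n}{k} \eta^{(k)}(\poscluster)\, g^{(n-k)}(\poscluster).
\end{equation}
The key observation is that the hypotheses on $\eta$ state precisely that $\eta^{(k)}(\poscluster)=\delta_0^k$ for all $0\le k\le 2N-1$, i.e. $\eta^{(0)}(\poscluster)=1$ and all higher derivatives up to order $2N-1$ vanish. Therefore, for every $n$ with $0\le n\le 2N-1$, every term in the Leibniz sum with $k\ge 1$ drops out, and only the $k=0$ term survives. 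This immediately yields $P^{(n)}(\poscluster) = \binom{n}{0}\eta(\poscluster)\, g^{(n)}(\poscluster) = g^{(n)}(\poscluster)$, which is the first assertion.

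For the second assertion, suppose additionally that $P\in\RR_{2N-1}[T]$, i.e. $P$ is a polynomial of degree at most $2N-1$. A polynomial of degree at most $2N-1$ equals its own Taylor expansion of order $2N-1$ at any point, and since the coefficients of that expansion are exactly $P^{(n)}(\poscluster)/n!$, the first part shows these coincide with $g^{(n)}(\poscluster)/n!$ for $0\le n\le 2N-1$. Hence $P$ is the order-$(2N-1)$ Taylor expansion of $g$ at $\poscluster$. To obtain the formula for $\etaW^{(2N)}(\poscluster)$, I would identify $P$ with $\etaW$ and note that, since $P$ has degree at most $2N-1$, its $2N$-th derivative vanishes identically. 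Applying the Leibniz rule once more at order $n=2N$ gives
\begin{equation}
  0 = P^{(2N)}(\poscluster) = \sum_{k=0}^{2N} \binom{2N}{k} \etaW^{(k)}(\poscluster)\, g^{(2N-k)}(\poscluster).
\end{equation}
In this sum the terms with $1\le k\le 2N-1$ vanish because $\etaW^{(k)}(\poscluster)=0$ there, leaving only the $k=0$ and $k=2N$ terms: $\etaW(\poscluster)g^{(2N)}(\poscluster) + \etaW^{(2N)}(\poscluster)g(\poscluster) = 0$. Using $\etaW(\poscluster)=1$ and solving for $\etaW^{(2N)}(\poscluster)$ under the assumption $g(\poscluster)\neq 0$ gives $\etaW^{(2N)}(\poscluster) = -g^{(2N)}(\poscluster)/g(\poscluster)$, as claimed.

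This is a purely mechanical computation and I do not anticipate any genuine obstacle; the only point requiring a modicum of care is the bookkeeping in the Leibniz sum, namely checking that the interpolation conditions kill exactly the intended terms at each order, and keeping track of the fact that the final formula uses the order-$2N$ Leibniz expansion (where the top term $k=2N$ re-enters) rather than the order-$(2N-1)$ one.
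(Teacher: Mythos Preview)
Your proof is correct and is exactly the approach the paper intends: the paper states that the lemma ``follows from the general Leibniz rule'' and leaves the details to the reader, and you have supplied precisely those details. One small wording slip: you say ``identify $P$ with $\etaW$,'' but of course $P=\eta\, g$ and it is $\eta$ that plays the role of $\etaW$; your subsequent computation uses this correctly, so the argument is fine.
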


\subsection{Explicit Formulas for $\etaW$ in Continuous Settings}\label{subsec:etaW-unnorm}

\subsubsection{Classical Laplace Operator}\label{par:laplace}

We suppose that $\mu=\Ll$, where $\Ll$ is the Lebesgue measure on $\RR_+$, and $\xi=1$.
%
Then one has
\begin{align}\label{sec:laplace-eq:correllapl1}
	\Co(\pos,\pos')=\frac{1}{\pos+\pos'}.
\end{align}

The following Proposition provides a formula for $\etaW$ in this unnormalized continuous setting and proves that it is nondegenerate.


\begin{prop}\label{prop:etaW-unnorm-Laplace}
  $\etaW$ is $(2N-1)$-nondegenerate. More precisely, we have 
  \begin{align}\label{etaW-expr}
\forall \pos\in\Pos, \quad \etaW(\pos)=1-\pa{\frac{\pos-\poscluster}{\pos+\poscluster}}^{2N}.
\end{align}
\end{prop}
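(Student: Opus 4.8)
The plan is to combine the structural description of $\etaW$ provided by the Remark following Proposition~\ref{sec:blasso-def:etaW} with the Taylor identification furnished by Lemma~\ref{lem:leibniz}. First I would check that $\etaW$ is well defined here: for $\phi(\pos)=(s\mapsto e^{-s\pos})$ one has $\phiD{k}=(s\mapsto(-s)^k e^{-s\poscluster})$, and these functions are linearly independent in $\Obs=\Ldeux(\RR_+)$ (a nontrivial combination is $Q(s)e^{-s\poscluster}$ for a nonzero polynomial $Q$, which does not vanish identically), so $\injdn$ holds at $\poscluster$ and Proposition~\ref{sec:blasso-def:etaW} applies. Differentiating the correlation kernel \eqref{sec:laplace-eq:correllapl1} gives $\partial^{(k)}_2\Co(\pos,\poscluster)=(-1)^k k!/(\pos+\poscluster)^{k+1}$, so substituting into the representation \eqref{sec:blasso-eq:etaWdefbis} exhibits $\etaW$ as a linear combination of $(\pos+\poscluster)^{-1},\ldots,(\pos+\poscluster)^{-2N}$. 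Reducing to the common denominator $(\pos+\poscluster)^{2N}$, this shows $\etaW(\pos)=P(\pos)/(\pos+\poscluster)^{2N}$ for some polynomial $P$ of degree at most $2N-1$.

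To pin down $P$ I would set $g(\pos)\eqdef(\pos+\poscluster)^{2N}$, so that $P=\etaW\times g$. Since $\etaW$ satisfies the interpolation conditions \eqref{sec:blasso-eq:etaWequations}, Lemma~\ref{lem:leibniz} applies and forces the polynomial $P$, being of degree at most $2N-1$, to equal the Taylor expansion of $g$ at $\poscluster$ of order $2N-1$. As $g$ is monic of degree $2N$ with $g^{(2N)}\equiv(2N)!$, this truncation only removes the top-order term, so $P(\pos)=(\pos+\poscluster)^{2N}-(\pos-\poscluster)^{2N}$, which gives exactly formula~\eqref{etaW-expr}.

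It then remains to verify the two nondegeneracy requirements of Definition~\ref{sec:blasso-def:etaWnondegen}. The last assertion of Lemma~\ref{lem:leibniz} yields directly $\etaW^{(2N)}(\poscluster)=-g^{(2N)}(\poscluster)/g(\poscluster)=-(2N)!/(2\poscluster)^{2N}\neq 0$. For the strict bound I would use that, for $\pos,\poscluster>0$ with $\pos\neq\poscluster$, the identity $(\pos+\poscluster)^2-(\pos-\poscluster)^2=4\pos\poscluster>0$ gives $|(\pos-\poscluster)/(\pos+\poscluster)|<1$; hence $0<\pa{(\pos-\poscluster)/(\pos+\poscluster)}^{2N}<1$ and therefore $0<\etaW(\pos)<1$, so in particular $|\etaW(\pos)|<1$ on $\Pos\setminus\{\poscluster\}$.

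The only genuinely delicate point is the very first step, namely making sure that $\etaW$ really lies in the $2N$-dimensional space spanned by the functions $\partial^{(k)}_2\Co(\cdot,\poscluster)$, $k=0,\ldots,2N-1$, equivalently that the minimal-norm vector $\pW$ belongs to the linear span of $\phiD{0},\ldots,\phiD{2N-1}$; this is precisely what Proposition~\ref{sec:blasso-def:etaW} guarantees once $\injdn$ has been checked. Everything downstream is an exact rational and polynomial identity, so I anticipate no analytic obstruction: the only arithmetic to carry out is the Taylor truncation of $(\pos+\poscluster)^{2N}$ and the elementary inequality $4\pos\poscluster>0$.
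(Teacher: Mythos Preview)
Your proposal is correct and complete. It differs from the paper's argument in the key identification step: the paper performs the change of variable $t=1/(\pos+\poscluster)$ and invokes Lemma~\ref{lem:faadibruno}, so that $\nu\eqdef\etaW\circ h$ becomes a polynomial of degree at most $2N$ in $t$ with $\nu(0)=0$; the vanishing conditions then force $\nu(t)=1+\beta_{2N}(t-\tcluster)^{2N}$, and the boundary condition $\nu(0)=0$ fixes $\beta_{2N}$. You instead clear the denominator directly, writing $\etaW(\pos)=P(\pos)/(\pos+\poscluster)^{2N}$ with $\deg P\leq 2N-1$, and apply Lemma~\ref{lem:leibniz} with $g(\pos)=(\pos+\poscluster)^{2N}$ to recognise $P$ as the order-$(2N-1)$ Taylor polynomial of $g$ at $\poscluster$. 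Since $g$ is itself a polynomial of degree $2N$, this truncation simply drops the top term $(\pos-\poscluster)^{2N}$, and the formula follows at once. Your route avoids the diffeomorphism entirely and reaches the $2N$-th derivative via the last clause of Lemma~\ref{lem:leibniz}; the paper's route makes the polynomial structure manifest after a substitution and uses only Lemma~\ref{lem:faadibruno}. Both are short, and your version has the minor bonus of making the verification of $\injdn$ and of the strict inequality $|\etaW|<1$ explicit.
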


\begin{figure}[!h]
\centering
\subfigure[$N=2$]{\includegraphics[width=0.48\linewidth]{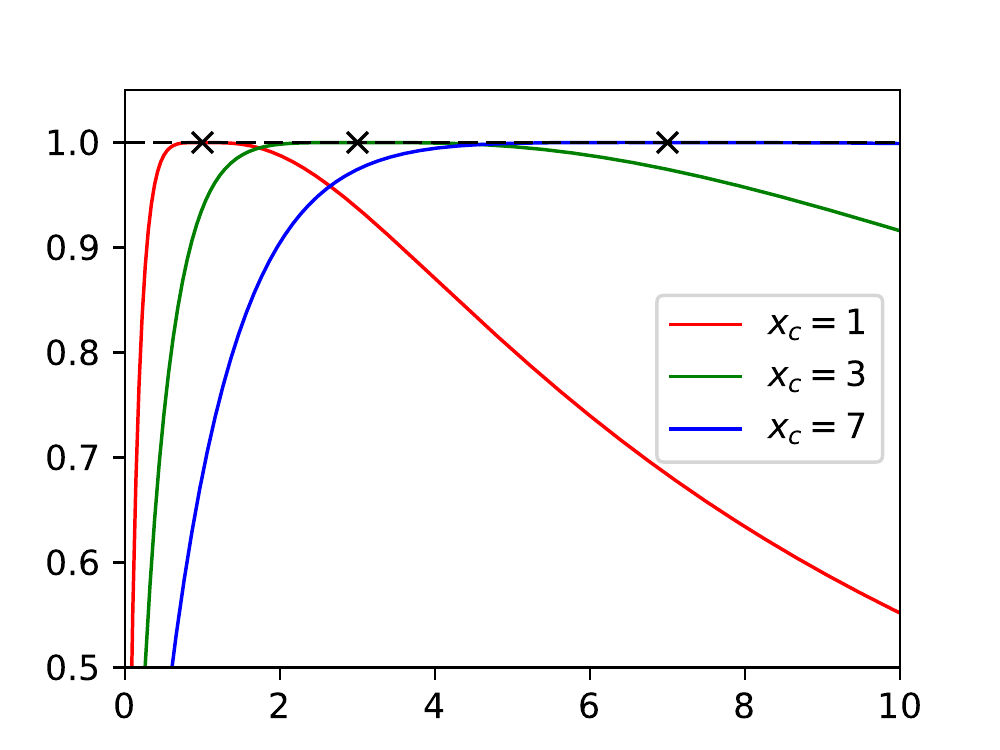}}
\subfigure[$\poscluster=1$]{\includegraphics[width=0.48\linewidth]{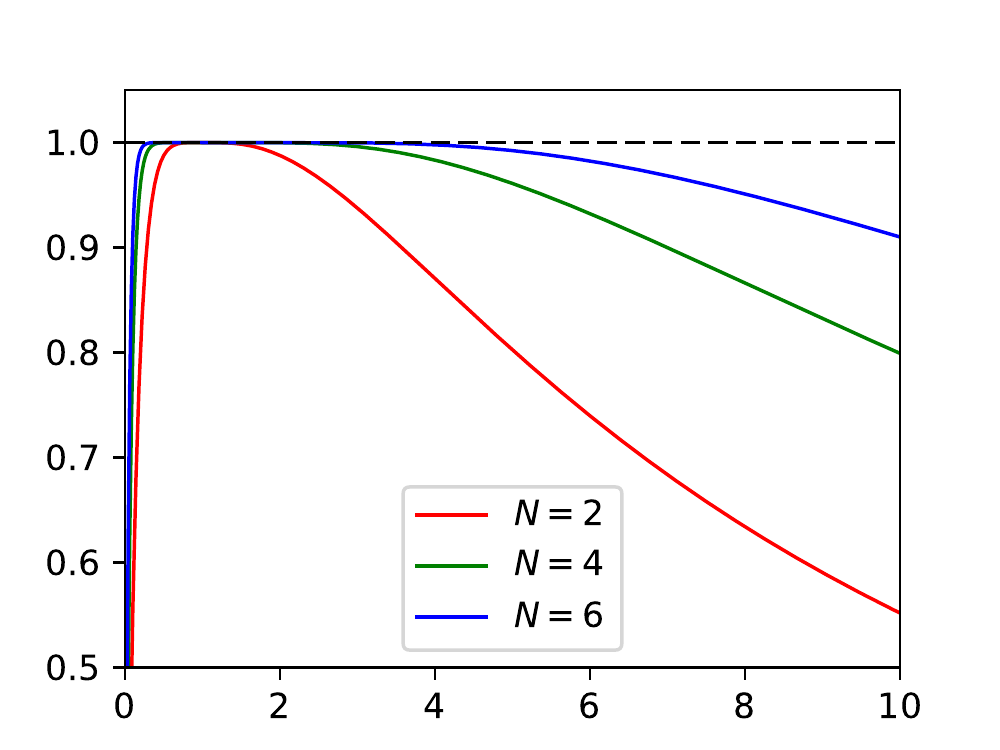}}
\caption{\label{sec:blasso-fig:etaW-laplace-unnorm}$\etaW$ for the unnormalized Laplace model for a varying $\poscluster$ with fixed $N=2$ and a fixed $\poscluster=1$ with varying $N\in\{2,4,6\}$.}
\end{figure}

In Figure~\ref{sec:blasso-fig:etaW-laplace-unnorm}, one sees that when the position $\poscluster$ where the spikes cluster increases, the curvature of $\etaW$ at $\poscluster$ decreases. This means that it is harder in this situation to perform the recovery. It reflects the exponential decay of the kernel $\phi$. 

\begin{proof}[Proof of Proposition~\ref{prop:etaW-unnorm-Laplace}]
  From Equations~\eqref{sec:blasso-eq:etaWdefbis} and~\eqref{sec:laplace-eq:correllapl1}, one sees that $\etaW$ has the form 
\begin{align*}
  \etaW(\pos)=\sum_{k=1}^{2N}\frac{\beta_k}{(x+\poscluster)^k}, \qwhereq \beta_k\in \RR.
\end{align*}
We set $h:t\mapsto (1/t-\poscluster)$, $\nu\eqdef \eta\circ h$ so that \begin{align*}
  \nu(t) =\sum_{k=1}^{2N}\beta_k t^k,
\end{align*}
is a polynomial with degree at most $2N$ with $\nu(0)=0$. By Lemma~\ref{lem:faadibruno}, $\nu$ satisfies~\eqref{eq:laplacefaadibruno} at $\tcluster\eqdef \frac{1}{2\poscluster}$.
As a result, $\nu(t)=1+\beta_{2N}(t-\tcluster)^{2N}$. The constant $\beta_{2N}$ is fixed by the condition $\nu(0)=0$, so that $\nu(t)=1-\left(\frac{t-\tcluster}{\tcluster}\right)^{2N}$, and $\etaW$ is given by~\eqref{etaW-expr}.

The $2N$ derivative is $\nu^{(2N)}(\tcluster)=-\frac{(2N)!}{(\tcluster)^{2N}}$, so that $\etaW(\poscluster)=-\frac{(2N)!}{(2\poscluster)^{2N}}<0$.
\end{proof}

\subsubsection{$L^2$-Normalized Laplace Operator}\label{par:L2-norm-laplace}

We choose $\mu=\Ll$, where $\Ll$ is the Lebesgue measure on $\RR_+$ , and
\eq{
	\forall \pos\in\Pos, \quad \xi(\pos) = \sqrt{\frac{1}{ \int_{\RR_+} e^{-2 s\pos} \d s  }}=\sqrt{2x},
}
so that for all $\pos\in\Pos$, $\varphi(\pos): s\mapsto \sqrt{2\pos}e^{-s\pos}$ and $\normObs{\phi(\pos)}=1$. One gets
\begin{equation}\label{eq-normalized-lapl-correl}
  \forall \pos,\pos'\in\Pos,\quad\Co(\pos,\pos')\eqdef\dotObs{\phi(\pos)}{\phi(\pos')}=\frac{2\sqrt{\pos\pos'}}{\pos+\pos'}.
\end{equation}

The following Proposition provides a formula for $\etaW$ in this normalized setting and proves that it is nondegenerate.


\begin{prop}\label{prop:etaW-normL2-Laplace}
  $\etaW$ is $(2N-1)$-nondegenerate. More precisely, we have the following formula:
  \begin{equation}
   \forall \pos\in\Pos,\quad \etaW(\pos)=\frac{2\sqrt{\pos\poscluster}}{\pos+\poscluster}\sum_{k=0}^{N-1}\frac{(2k)!}{2^{2k}(k!)^2}\left(\frac{\pos-\poscluster}{\pos+\poscluster}\right)^{2k}.
  \end{equation}
\end{prop}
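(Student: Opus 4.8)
The plan is to follow the template of the proof of Proposition~\ref{prop:etaW-unnorm-Laplace}, but with the change of variables tuned to the normalized kernel. The guiding observation is that under the Möbius substitution $u=\frac{\pos-\poscluster}{\pos+\poscluster}$ the correlation kernel collapses to $\Co(\pos,\poscluster)=\sqrt{1-u^2}$, while $\frac{(2k)!}{2^{2k}(k!)^2}$ are exactly the Taylor coefficients of $(1-u^2)^{-1/2}$; so the announced formula should emerge as a truncated binomial series. First I would pin down the shape of $\etaW$ as a function of $\pos$. Starting from the representation~\eqref{sec:blasso-eq:etaWdefbis} and the kernel~\eqref{eq-normalized-lapl-correl}, differentiating $\Co(\pos,\pos')=\frac{2\sqrt{\pos}\sqrt{\pos'}}{\pos+\pos'}$ up to $2N-1$ times in $\pos'$ and evaluating at $\pos'=\poscluster$ produces, after factoring out the $\pos'$-independent constant $2\sqrt{\pos}$, only terms $(\pos+\poscluster)^{-j}$ with $1\le j\le 2N$. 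Hence $\etaW(\pos)=2\sqrt{\pos}\sum_{j=1}^{2N}\frac{\gamma_j}{(\pos+\poscluster)^j}$ for some reals $\gamma_j$, exactly as in the unnormalized case but with the extra factor $2\sqrt{\pos}$.

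Next I would transport $\etaW$ through the diffeomorphism $h:u\mapsto\poscluster\frac{1+u}{1-u}$ of $(-1,1)$ onto $\RR_+^*$, with $h(0)=\poscluster$, and set $\nu\eqdef\etaW\circ h$. Using $\pos+\poscluster=\frac{2\poscluster}{1-u}$ and $\sqrt{\pos}=\sqrt{\poscluster}\sqrt{\tfrac{1+u}{1-u}}$, together with the identity $\sqrt{\tfrac{1+u}{1-u}}(1-u)^j=\sqrt{1-u^2}(1-u)^{j-1}$, a direct substitution gives $\nu(u)=\sqrt{1-u^2}\,\tilde P(u)$ where $\tilde P$ is a polynomial of degree at most $2N-1$; the same computation also yields $\Co(\pos,\poscluster)=\sqrt{1-u^2}$. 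By Lemma~\ref{lem:faadibruno} the conditions~\eqref{sec:blasso-eq:etaWequations} transfer to $\nu(0)=1,\ \nu'(0)=\cdots=\nu^{(2N-1)}(0)=0$. I would then apply Lemma~\ref{lem:leibniz} at the point $0$ with $g(u)=(1-u^2)^{-1/2}$ and $\eta=\nu$: the product $\nu\times g=\tilde P$ is a polynomial of degree at most $2N-1$, hence coincides with the order-$(2N-1)$ Taylor expansion of $g$ at $0$, namely $\tilde P(u)=\sum_{k=0}^{N-1}\frac{(2k)!}{2^{2k}(k!)^2}u^{2k}$. Undoing the substitution delivers the claimed closed form.

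For the $(2N-1)$-nondegeneracy (Definition~\ref{sec:blasso-def:etaWnondegen}), the concluding part of Lemma~\ref{lem:leibniz} gives $\nu^{(2N)}(0)=-g^{(2N)}(0)/g(0)=-\frac{((2N)!)^2}{2^{2N}(N!)^2}\ne 0$, which Lemma~\ref{lem:faadibruno} transports back to $\etaW^{(2N)}(\poscluster)=\nu^{(2N)}(0)/(h'(0))^{2N}\ne 0$ since $h'(0)=2\poscluster$. For the strict bound $|\etaW(\pos)|<1$ on $\Pos\setminus\{\poscluster\}$ I would argue in the $u$-variable: since $\etaW=\sqrt{1-u^2}\,Q(u)$ with $Q$ the truncation of the all-positive-coefficient series of $(1-u^2)^{-1/2}$, one has $0<Q(u)<(1-u^2)^{-1/2}$ for $u\in(-1,1)\setminus\{0\}$, whence $0<\etaW<1$ there, with value $1$ only at $u=0$, i.e.\ $\pos=\poscluster$.

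The step I expect to require the most care is the reduction in the second paragraph: the genuinely new feature compared with Proposition~\ref{prop:etaW-unnorm-Laplace} is the non-polynomial factor $2\sqrt{\pos}$, and the whole argument hinges on checking that it recombines with the kernel into the clean $\sqrt{1-u^2}$ under the Möbius map, leaving behind a bona fide polynomial $\tilde P$ of degree at most $2N-1$. Once that recombination is verified, the identification of $\tilde P$ and the nondegeneracy follow immediately from Lemmas~\ref{lem:faadibruno} and~\ref{lem:leibniz} and from the positivity of the binomial coefficients.
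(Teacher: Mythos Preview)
Your proof is correct and follows essentially the same approach as the paper: reduce to a polynomial via Lemmas~\ref{lem:faadibruno} and~\ref{lem:leibniz}, identify that polynomial as the truncated binomial series of $(1-u^2)^{-1/2}$, and conclude nondegeneracy from the positivity of the coefficients. The only difference is cosmetic: the paper uses the substitution $t=\tfrac{1}{\pos+\poscluster}$ (as in Proposition~\ref{prop:etaW-unnorm-Laplace}) and then recenters via $t=u+\tfrac{1}{2\poscluster}$, whereas you go directly to the M\"obius variable $u=\tfrac{\pos-\poscluster}{\pos+\poscluster}$; composed, the two changes of variable differ only by the scaling $z=2\poscluster(t-\tcluster)=-u$, which is immaterial since everything is even. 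Your single substitution makes the factor $\sqrt{1-u^2}$ appear immediately and streamlines the bookkeeping, while the paper's route has the minor advantage of literally reusing the change of variables from the unnormalized case.
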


\begin{figure}[!h]
\centering
\subfigure[$N=2$]{\includegraphics[width=0.48\linewidth]{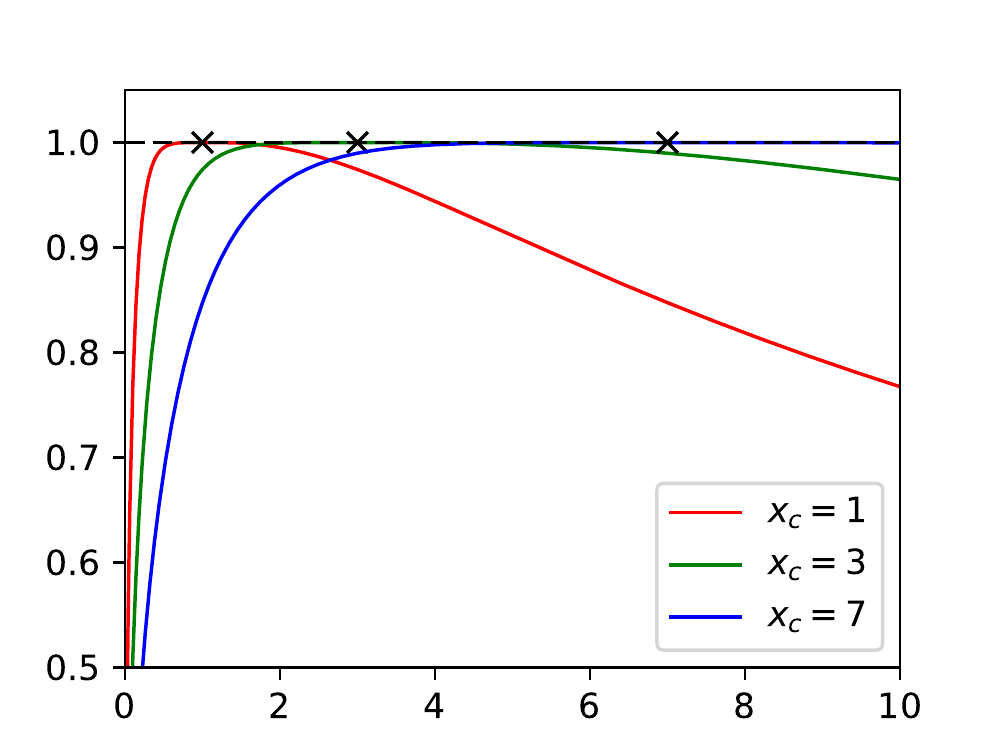}}
\subfigure[$\poscluster=1$]{\includegraphics[width=0.48\linewidth]{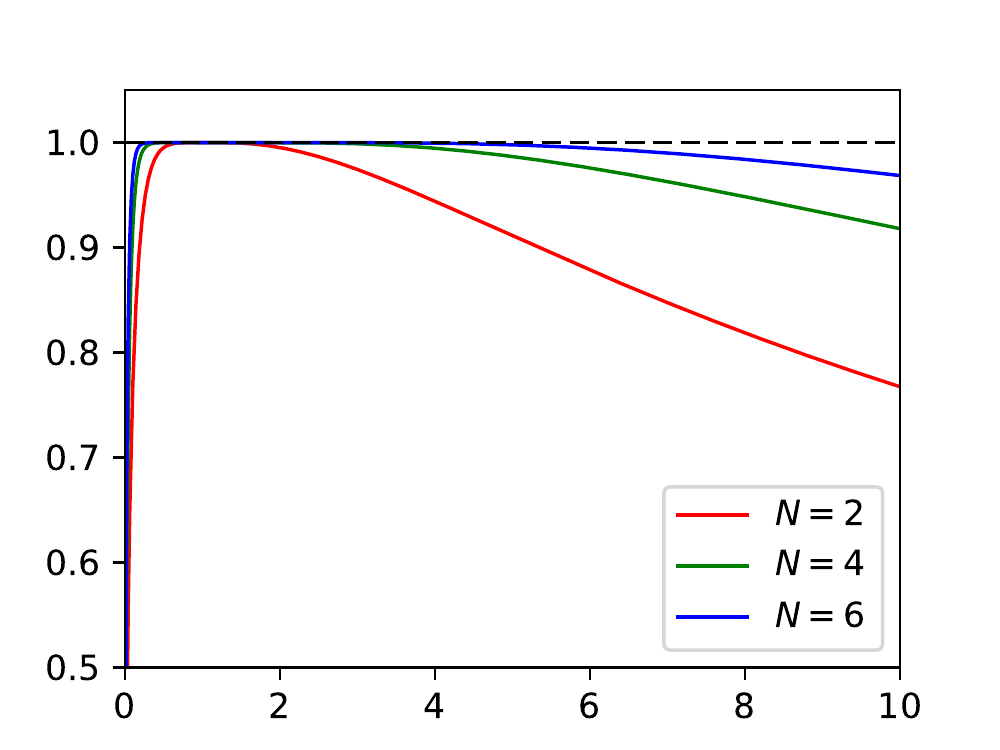}}
\caption{\label{sec:blasso-fig:etaW-laplace-norm}$\etaW$ for the normalized Laplace model for a varying $\poscluster$ with fixed $N=2$ and a fixed $\poscluster=1$ with varying $N\in\{2,4,6\}$.}
\end{figure}

In Figure~\ref{sec:blasso-fig:etaW-laplace-norm}, one sees that when the position $\poscluster$ where the spikes cluster increases then the curvature of $\etaW$ at $\poscluster$ decreases. The interpretation is the same as in the previous paragraph.

\begin{proof}[Proof of Proposition~\ref{prop:etaW-normL2-Laplace}]
From the general Leibniz rule, we have for all $n\in\{0,\ldots,2N-1\}$ and for all $\pos,\pos'\in\Pos$:
\begin{equation*}
	\frac{\d^{n}}{\d {\pos'}^{n}}\pa{\Co(\pos,\pos')}=2\sqrt{\pos}\sum_{k=0}^{n}\binom{n}{k}\frac{\d^{n-k}}{\d {\pos'}^{n-k}}\left(\sqrt{\pos'}\right)\frac{\d^k}{\d {\pos'}^k}\left(\frac{1}{\pos+\pos'}\right)
\end{equation*}
Evaluating this expression at $\pos'=\poscluster$, one gets that:
\eq{
	\partial_2^{n}\Co(\pos,\poscluster)=\sqrt{\pos}\sum_{k=0}^{n} \frac{\alpha_k}{(\pos+\poscluster)^{k+1}},
}
for some coefficients $\alpha_k\in \RR$. As a result, $\etaW$ is the unique function the form
\begin{equation*}
  \etaW(\pos)=\sqrt{\pos}\sum_{k=0}^{2N-1} \frac{\be_k}{(\pos+\poscluster)^{k+1}}
\end{equation*}
for some coefficients $\beta_k\in \RR$, which satisfies~\eqref{sec:blasso-eq:etaWequations}.
As before, we set $t=\frac{1}{\pos+\poscluster}$, that is $\pos=h(t)\eqdef \frac{1}{t}-\poscluster$, and $h$ is a diffeomorphism of $(0,1/\poscluster)$ onto $(0,+\infty)$.
Then:
\eq{ \etaW\circ h(t)=\sqrt{\frac{1}{t}-\poscluster}tP(t)=\sqrt{t-t^2\poscluster}P(t),}
where $P(T)=\sum_{k=0}^{2N-1}\beta_k T^k\in \RR_{2N-1}[T]$.

By Lemma~\ref{lem:faadibruno} and Lemma~\ref{lem:leibniz}, $P$ is the Taylor expansion of order $2N-1$ of $g:t\mapsto \frac{1}{\sqrt{t-t^2\poscluster}}$ at $\tcluster=h^{-1}(\poscluster)=\frac{1}{2\poscluster}$. Setting $t=u+\frac{1}{2\poscluster}$, we note that:
\begin{align*}
  \frac{1}{\sqrt{t-t^2\poscluster}}&=\frac{2\sqrt{\poscluster}}{\sqrt{1-(2u\poscluster)^2}}
  \qandq \frac{1}{\sqrt{1-z^2}} =\sum_{k=0}^{N-1}\frac{(2k)!}{2^{2k}(k!)^2}z^{2k} + o(z^{2N-1}).
\end{align*}
One deduces that 
\begin{align*}
\frac{1}{\sqrt{t-t^2\poscluster}}&= 2\sqrt{\poscluster}\sum_{k=0}^{N-1}\frac{(2k)!}{2^{2k}(k!)^2}\left[2\poscluster(t-\tcluster)\right]^{2k}+o(\pa{t-\tcluster}^{2N-1}).
\end{align*}

As a result, $P$ is given by $P(t)=2\sqrt{\poscluster}\sum_{k=0}^{N-1}\frac{(2k)!}{2^{2k}(k!)^2}\left[2\poscluster(t-\tcluster)\right]^{2k}$
and 
\begin{align}
  \etaW\circ h(t)&=\sqrt{t-t^2\poscluster}P(t)\\
                 &=1-\frac{\sum_{k=M}^{+\infty}\frac{(2k)!}{2^{2k}(k!)^2}\left[2\poscluster(t-\tcluster)\right]^{2k}}{\sum_{k=0}^{+\infty}\frac{(2k)!}{2^{2k}(k!)^2}\left[2\poscluster(t-\tcluster)\right]^{2k}}.
\end{align}
One sees that $\abs{\etaW\circ h(t)}<1$ for all $t\in (0,\frac{1}{\poscluster})\setminus\{\frac{1}{2\poscluster}\}$, and by Lemma~\ref{lem:leibniz},
  \begin{equation}
    (\etaW\circ h)^{(2N)}(\tcluster)=-g^{(2N)}(\tcluster)/g(\tcluster)=-\frac{((2N)!)^2}{(N!)^2}\poscluster^{2N}<0
  \end{equation}
 so that $\etaW\circ h$ (hence $\etaW$) is $(2N-1)$-nondegenerate. One recovers $\etaW$ by composing with $h^{-1}$, noting that $2\poscluster(t-\tcluster)=\frac{\poscluster-\pos}{\pos+\poscluster}$.
\end{proof}

\subsection{Sampled Approximations}\label{subsec-laplaceinversion}

The previous two cases (normalized and unnormalized versions of the Laplace transform) correspond to mathematical idealizations. In practice, one needs to restrict the sampling patterns by limiting their ranges and considering discrete samples. The following two setups are involved in the application of Section~\ref{sec:microscopy}.





\myparagraph{Discretized Unnormalized Laplace}~
We assume that $\mu=\sum_{k=0}^{K-1} \dirac{s_k}$ and $\xi=1$. Then $\phi(\pos)=(e^{-s_k\pos})_{k=0}^{K-1} \in\RR^K$ and:
\eq{
  \Co(\pos,\pos') = \sum_{k=0}^{K-1} e^{-s_k(\pos+\pos')}.
}

\if 0
\todo{Est-ce qu'on se sert de ca quelque part ou bien c'est juste utile pour la doc du code? Voulez-vous le garder?}{\color{blue}
If we make the assumption that we discretize uniformly the interval $[a,b]\subset\RR$ \ie $s_k=a+\frac{k}{K-1}(b-a)$ for $k\in\{0,\ldots,K-1\}$, then:
\eq{
	\Co(\pos,\pos') = e^{-m(\pos+\pos') }  \frac{\sinh\pa{\De\frac{K}{K-1}\frac{\pos+\pos'}{2} }}{\sinh\pa{\frac{\De}{K-1}\frac{\pos+\pos'}{2} }},
}
where $\De=b-a$ and $m=(a+b)/2$.
}
\fi

\myparagraph{Discretized $L^2$-normalized Laplace}~
We let $\mu=\sum_{k=0}^{K-1} \dirac{s_k}$ and $\xi(\pos)=\pa{\sum_{k=0}^{K-1} e^{-2s_k \pos}}^{-1/2}$. Then $\phi(\pos)=\xi(\pos)(e^{-s_k\pos})_{k=0}^{K-1} \in\RR^K$, $\normObs{\phi(\pos)}=1$ and:
\eq{
  \Co(\pos,\pos') = \xi(\pos)\xi(\pos')\sum_{k=0}^{K-1} e^{-s_k(\pos+\pos')}.
}

\if 0
\todo{Est-ce qu'on se sert de ca quelque part ou bien c'est juste utile pour la doc du code? Voulez-vous le garder?}{\color{blue}
If we make the assumption that we discretize uniformly the interval $[a,b]$ \ie $s_k=a+\frac{k}{K-1}(b-a)$ for $k\in\{0,\ldots,K-1\}$, then:
\eq{
	\Co(\pos,\pos') = \sqrt{\frac{\sinh\pa{\frac{\De}{K-1}\pos } \sinh\pa{\frac{\De}{K-1}\pos' } }{\sinh\pa{\De\frac{K}{K-1}\pos } \sinh\pa{\De\frac{K}{K-1}\pos' }  } }    \frac{\sinh\pa{\De\frac{K}{K-1}\frac{\pos+\pos'}{2} }}{\sinh\pa{\frac{\De}{K-1}\frac{\pos+\pos'}{2} }},
}
where $\De=b-a$ and $m=(a+b)/2$.
}
\fi 

In contrast to the continuous setups of Section~\ref{subsec:etaW-unnorm}, we do not have closed-form expressions for $\etaW$. However, if a sequence of measures, \eg{} $\mun=\sum_{k=0}^{K_{n}-1} \munk \dirac{s_{n,k}}$ converges in a suitable sense towards the Lebesgue measure $\mu=\Ll$, the following proposition shows that the corresponding $\etaW$ must be nondegenerate for $n$ large enough. We consider both the unnormalized and $L^2$-normalized setups, corresponding respectively to
\begin{align*}
  \Con(\pos,\pos')&=\int_{\RR_+}e^{-(x+x')s}\d\mun(s), \mbox{ and}\\
\Con(\pos,\pos')&=\xi_n(x)\xi_n(x')\int_{\RR_+}e^{-(x+x')s}\d\mun(s) \qwhereq \xi_n(x)=\int_{\RR_+}e^{-2xs}\d\mun(s),
\end{align*}
and similarly for $\Co$ and $\mu=\Ll$.

\begin{prop}
  Let $(\mu_n)_{n\in\NN}$ be a sequence of positive measures which converges towards the Lebesgue measure $\mu$ in the local weak-* topology, \ie{}
  \begin{align*}
\forall  \psi\in\Cder{}_c(\RR_+),\quad    \lim_{n\to +\infty}\int_{\RR_+}\psi(s)\d\mu_n(s) = \int_{\RR_+}\psi(s)\d s,
  \end{align*}
and such that 
  \begin{align}\label{eq:cvphimoment}
    \sup_{n\in\NN} \int_{\RR_+} (1+s^{4N-1})e^{-\xmin s}\d\mun(s)<+\infty.
  \end{align}
Then, both in the unnormalized and the $L^2$-normalized case, for $n$ large enough, the $2N-1$ vanishing derivatives precertificate  $\etaWk{n}$ is $(2N-1)$-nondegenerate. 
\end{prop}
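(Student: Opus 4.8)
The plan is to show that the entire construction of $\etaW$ is \emph{stable} under the convergence $\mun\to\mu=\Ll$, and then to argue that the two requirements for $(2N-1)$-nondegeneracy — namely $\etaW^{(2N)}(\poscluster)\neq0$ and $\abs{\etaW(\pos)}<1$ away from $\poscluster$ — which are already established for the Lebesgue limit in Propositions~\ref{prop:etaW-unnorm-Laplace} and~\ref{prop:etaW-normL2-Laplace}, survive a small perturbation of the measure. The first requirement is an open condition and transfers for free; the genuine difficulty is the global inequality, which is \emph{not} open.

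First I would establish convergence of the correlation kernels together with their derivatives: for all $j,k$ with $j+k\leq 4N-1$,
\[
	\partial_1^{j}\partial_2^{k}\Con(\pos,\pos') \longrightarrow \partial_1^{j}\partial_2^{k}\Co(\pos,\pos')
	\quad\text{uniformly on } \Pos\times\Pos.
\]
In the unnormalized case, differentiating under the integral sign (legitimate by the moment bound~\eqref{eq:cvphimoment}) gives $\partial_1^{j}\partial_2^{k}\Con(\pos,\pos')=\int_{\RR_+}(-s)^{j+k}e^{-(\pos+\pos')s}\d\mun(s)$, so the issue is to pass to the limit in $\int_{\RR_+}s^{m}e^{-(\pos+\pos')s}\d\mun(s)$ uniformly in $(\pos,\pos')$, for $m\leq 4N-1$. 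Since the integrand is not compactly supported, local weak-* convergence does not apply directly; I would split $\int_0^{+\infty}=\int_0^{T}+\int_T^{+\infty}$, bound the tail uniformly in $n$ using~\eqref{eq:cvphimoment} (as $e^{-(\pos+\pos')s}\leq e^{-\xmin s}$ on $\Pos$ and $s^{m}\leq 1+s^{4N-1}$), and treat the head against a compactly supported cutoff for which $\mun\to\Ll$ weakly-*. This tightness argument, needed to upgrade weak-* convergence to convergence of all relevant moments uniformly in $(\pos,\pos')$, is the main technical obstacle. The $L^2$-normalized case then reduces to the unnormalized one: writing $\Con=\xi_n(\pos)\xi_n(\pos')\int e^{-(\pos+\pos')s}\d\mun$, the factor $\xi_n^{-2}(\pos)=\int e^{-2\pos s}\d\mun(s)$ converges in $\Contk{2N}(\Pos,\RR)$ to $\xi^{-2}$, which is bounded below on the compact $\Pos$, so $\xi_n\to\xi$ in $\Contk{2N}(\Pos,\RR)$ by the chain rule, and a product of $\Contk{2N}$-convergent factors still converges with all derivatives.

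Next I would transfer this convergence to $\etaWk{n}$. Writing $\etaWk{n}(\pos)=\sum_{k=0}^{2N-1}\al_k^{(n)}\partial_2^{k}\Con(\pos,\poscluster)$ as in~\eqref{sec:blasso-eq:etaWdefbis}, the coefficient vector $\al^{(n)}$ solves the linear system $A_n\,\al^{(n)}=\dirac{2N}$, where $(A_n)_{jk}=\partial_1^{j}\partial_2^{k}\Con(\poscluster,\poscluster)$ encodes the interpolation conditions~\eqref{sec:blasso-eq:etaWequations}. The limiting matrix $A$ is invertible precisely because $\injdn$ holds at $\poscluster$ for the Lebesgue kernel, which is what makes $\etaW$ well-defined in Proposition~\ref{sec:blasso-def:etaW}. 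Since $A_n\to A$ by the first step and invertibility is an open condition, $A_n$ is invertible for $n$ large and $\al^{(n)}\to\al$. Combined with the uniform convergence of $\partial_2^{k}\Con(\cdot,\poscluster)$ and its derivatives up to order $2N$, this yields $\etaWk{n}\to\etaW$ in $\Contk{2N}(\Pos,\RR)$.

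Finally I would read off nondegeneracy from this $\Contk{2N}$-convergence. The condition $\etaWk{n}^{(2N)}(\poscluster)\neq0$ is immediate, being open, with limit value $\etaW^{(2N)}(\poscluster)$ strictly negative by Propositions~\ref{prop:etaW-unnorm-Laplace} and~\ref{prop:etaW-normL2-Laplace}. For the global bound $\abs{\etaWk{n}}<1$ on $\Pos\setminus\{\poscluster\}$ I would split $\Pos$ into $U=(\poscluster-\delta,\poscluster+\delta)$ and $K=\Pos\setminus U$. On the compact $K$, $\sup_K\abs{\etaW}=1-\varepsilon_0<1$, so uniform convergence gives $\sup_K\abs{\etaWk{n}}<1$ for $n$ large. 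On $U$, I would exploit that $\etaWk{n}(\poscluster)=1$ and $\etaWk{n}^{(k)}(\poscluster)=0$ for $1\leq k\leq 2N-1$ hold \emph{exactly} for every $n$ (these are the defining equations~\eqref{sec:blasso-eq:etaWequations}), while $\etaWk{n}^{(2N)}(\poscluster)$ is bounded away from $0$ with negative sign; choosing $\delta$ so that $1-\etaW(\pos)\geq c\,(\pos-\poscluster)^{2N}$ on $U$ and invoking $\Contk{2N}$-convergence yields $1-\etaWk{n}(\pos)\geq \tfrac{c}{2}(\pos-\poscluster)^{2N}>0$ on $U\setminus\{\poscluster\}$ for $n$ large, hence $\abs{\etaWk{n}}<1$ there too. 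Combining the two regions proves the proposition.
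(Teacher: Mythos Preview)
Your proposal is correct and follows essentially the same route as the paper: both arguments reduce the claim to the uniform convergence of $\partial_1^{j}\partial_2^{k}\Con$ on $\Pos\times\Pos$ for $j+k\leq 4N-1$, established via a cutoff/tightness splitting against the moment bound~\eqref{eq:cvphimoment}, and then push this through the linear system defining $\etaW$ to obtain $\etaWk{n}\to\etaW$ in $\Contk{2N}$. Your treatment is in fact more explicit than the paper's in two places: the paper simply asserts that uniform convergence of $\etaWk{n}^{(i)}$ up to order $2N$ ``implies $\abs{\etaWk{n}(\pos)}<1$ for $\pos\neq\poscluster$'', whereas you correctly supply the near/far decomposition (using the exact interpolation conditions at $\poscluster$ and the sign of the $2N$-th derivative near $\poscluster$, and the strict bound on the complementary compact set); and for the $L^2$-normalized case the paper invokes only an upper bound on $\xi_n$, while you spell out the $\Contk{2N}$-convergence of $\xi_n$ needed to propagate derivatives through the product.
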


\begin{proof}
  Let us denote by $\Fdn^{[n]}=(\phiD{0}, \ldots,\phiD{2N-1})$ (resp. $\Fdn$)  the impulse response derivatives corresponding to $\mun$ (resp. $\mu=\Ll$), and by $\etaW$ the $2N-1$ vanishing derivatives precertificate for $\mu=\Ll$.
  First, in view of Sections~\ref{par:laplace} and~\ref{par:L2-norm-laplace}, we observe that the result follows immediately if we prove that
\begin{align}
  \lim_{n\to +\infty}\Fdn^{[n]*}\Fdn^{[n]} &= \Fdn^*\Fdn, \label{eq:cvfdn}
\end{align}
(as it implies the linear independence of $(\phiD{0}, \ldots,\phiD{2N-1})$ for $n$ large enough), and that
\begin{align}
\forall i\in \{0,1,\ldots, 2N\},\quad   \lim_{n\to +\infty}	 \normLi{\etaWk{n}^{(i)}-\etaW^{(i)}}&=0,\label{eq:cvetawk}
\end{align}
(as it implies $\abs{\etaWk{n}(\pos)}<1$ for $\pos\neq \poscluster$ and $\etaWk{n}^{(2N)}(\poscluster)<0$ for $n$ large enough).

  We recall from~\eqref{sec:blasso-eq:etaWdefbis} that $\etaWk{n}$ is given by
    $\etaWk{n}(\pos)=\sum_{i=0}^{2N-1} \al_{i}^{[n]}\partial^{(i)}_2\Con(\pos,\poscluster)$
  where $\alpha^{[n]} =(\Fdn^{[n]*}\Fdn^{[n]})^{-1} \dirac{2N}$ (provided the matrix is invertible), and the $(i,j)$-entry of $(\Fdn^{[n]*}\Fdn^{[n]})$ is $\partial^{(i)}_1\partial^{(j)}_2\Con(\poscluster,\poscluster)$.
  As a consequence, both~\eqref{eq:cvfdn} and~\eqref{eq:cvetawk} are established if we can prove that
  \begin{align}\label{eq:cvuphi}
    \lim_{n\to+\infty} \sup_{x,x'\in [\xmin,\xmax]} \abs{\partial^{(i)}_1\partial^{(j)}_2\Con(x,x')-\partial^{(i)}_1\partial^{(j)}_2\Co(x,x')}=0,
  \end{align}
  for all $i\in \{0,\ldots, 2N\}$, $j\in \{0,\ldots, 2N-1\}$.

  First, we prove~\eqref{eq:cvuphi} in the unnormalized case, \ie{} $\Con(\pos,\pos')=\int_{\RR_+}e^{-(x+x')s}\d\mun(s)$. The dominated convergence theorem ensures that $\partial^{(i)}_1\partial^{(j)}_2\Con(x,x')=\int_{\RR_+}s^{i+j}e^{-(x+x')s}\d\mun(s)$ (and similarly for $\Co$ and $\mu$).

Let $(x,x')\in [\xmin,\xmax]^2$ and let $\psi\in \Cder{}_c(\RR_+)$ such that $\psi(s)=1$ for $s\in [0,1]$, $\psi(s)=0$ for $s\geq 2$, and $0\leq \psi\leq 1$ on $\RR_+$.
We denote by $C$ the supremum in~\eqref{eq:cvphimoment}.

  Let $\varepsilon>0$ and $A>0$.  Then,
\begin{align*}
  &\abs{\int_{\RR_+}s^{i+j} e^{-{(x+x')} s}\d\mun(s)- \int_{\RR_+} s^{i+j}e^{-{(x+x')} s}\d s}\\ 
  &\leq  \underbrace{\abs{\int_{\RR_+} s^{i+j}e^{-{(x+x')} s}\psi\left(\frac{s}{A}\right)\d\mun(s)- \int_{\RR_+}s^{i+j} e^{-{(x+x')} s}\psi\left(\frac{s}{A}\right)\d s}}_{\eqdef a}\\
  & + \underbrace{\abs{ \int_{\RR_+} s^{i+j} e^{-{(x+x')} s}(1-\psi\left(\frac{s}{A}\right))\d \mun(s)}}_{=b}+ \underbrace{\abs{ \int_{\RR_+} s^{i+j}e^{-{(x+x')} s}(1-\psi\left(\frac{s}{A}\right))\d s}}_{=c}.
\end{align*}
We have 
\begin{align*}
  c&\leq \int_A^{+\infty} (1+s^{4N-1})e^{-2\xmin s}\d s,\\ \qandq
  b &\leq e^{-\xmin A} \int_{\RR_+} (1+s^{4N-1}) e^{-{\xmin} s}\d\mun(s) \leq e^{-\xmin A} C.
\end{align*}
We choose $A>0$ sufficiently large so that $\int_A^{+\infty} (1+s^{4N-1})e^{-2{\xmin} s}\d s\leq\varepsilon$ and $ e^{-\xmin A} C\leq \varepsilon$, hence $\max(b,c)\leq \varepsilon$.

Now, to prove that $a$ is uniformly small for $(x,x')\in[\xmin,\xmax]^2$ as $n\to +\infty$, we apply Lemma~\ref{lem:uniformcv} to $((x,x'),s)\mapsto s^{i+j}e^{-(x+x')s}\psi\left(\frac{s}{A}\right)$ defined on $[\xmin,\xmax]^2\times [0,2A]$. This yields the desired result. 

The proof for the normalized case readily follows from the uniform convergence of the unnormalized case and the fact that the normalization factors $\xi_n(\pos)=\left(\int_{\RR_+}e^{-2s\pos}\d\mu_n(s)\right)^{-1/2}\leq \left(\int_{\RR_+}e^{-2s\xmax}\d\mu_n(s)\right)^{-1/2}$ are upper bounded by some positive constant independent of $n$.

\end{proof}

\begin{lem}\label{lem:uniformcv}
  Let $X$ and $S$ be two compact metric spaces, and $\psi\in \Cder{}(X\times S)$. If $\{\mun\}_{n\in\NN}$ and $\mu$ are Radon measures such that $\mun \stackrel{*}{\rightharpoonup} \mu$ in the weak-* convergence of $\Mm(S)$, then
  \begin{align*}
    \lim_{n\to +\infty} \int_{S}\psi(x,s)\d\mun(s) =  \int_{S}\psi(x,s)\d\mu(s),
  \end{align*}
  uniformly in $x\in X$.
\end{lem}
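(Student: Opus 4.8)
The plan is to reduce the statement to a standard equicontinuity argument on the compact space $X$. Set
\eq{
	F_n(x)\eqdef\int_S\psi(x,s)\d\mun(s)\qandq F(x)\eqdef\int_S\psi(x,s)\d\mu(s),
}
so that the goal is to prove $F_n\to F$ uniformly on $X$. The first step I would carry out is to record a uniform mass bound. Since $\mun\stackrel{*}{\rightharpoonup}\mu$, the sequence is weak-* convergent in the dual $\Mm(S)=\Cder{}(S)^*$ (Riesz--Markov, $S$ being compact metric); in particular $(\mun)_n$ is pointwise bounded on the Banach space $\Cder{}(S)$, so the Banach--Steinhaus theorem yields $M\eqdef\sup_n\normTVX{\mun}<+\infty$. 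In the application the $\mun$ are positive, so alternatively one may simply test against the constant function $1$ to get the bound.

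Next I would establish equicontinuity of the family $\{F_n\}_n\cup\{F\}$. Because $X\times S$ is a compact metric space, $\psi$ is \emph{uniformly} continuous there; hence for every $\varepsilon>0$ there is $\delta>0$ such that $\abs{\psi(x,s)-\psi(x',s)}<\varepsilon$ for all $s\in S$ whenever $d_X(x,x')<\delta$. Integrating against the total variation $\abs{\mun}$ then gives $\abs{F_n(x)-F_n(x')}\le\varepsilon M$, uniformly in $n$, and the same estimate holds for $F$. This is precisely the point where the uniform mass bound $M$ is needed, and I expect it to be the only genuinely substantive ingredient: without the uniform-in-$n$ bound on $\normTVX{\mun}$ one could not make the modulus of continuity of the $F_n$ independent of $n$, and the argument would collapse. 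Everything after this is bookkeeping.

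Finally I would combine pointwise convergence with equicontinuity on the compact set $X$. For each fixed $x$ the slice $\psi(x,\cdot)$ belongs to $\Cder{}(S)$, so the weak-* convergence gives $F_n(x)\to F(x)$. Covering $X$ by finitely many balls $B(x_1,\delta),\dots,B(x_m,\delta)$ (possible by compactness), I would choose $n$ large enough that $\abs{F_n(x_j)-F(x_j)}<\varepsilon$ for every $j$; then for an arbitrary $x\in X$ I pick $x_j$ with $d_X(x,x_j)<\delta$ and use the triangle inequality together with the Step~2 equicontinuity bound applied to both $F_n$ and $F$, obtaining $\abs{F_n(x)-F(x)}\le 3\varepsilon$ for all large $n$, uniformly in $x$. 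This proves the uniform convergence and closes the argument.
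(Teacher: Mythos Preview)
Your argument is correct. Both you and the paper invoke the Banach--Steinhaus theorem to obtain the uniform bound $\sup_n\normTVX{\mun}<+\infty$, but from there the two proofs diverge in style. The paper works more abstractly: it observes that $(x,\nu)\mapsto\int_S\psi(x,s)\d\nu(s)$ is continuous on $X\times\Mm(S)$, restricts to the weak-* compact and metrizable ball $\Bb_R=\{\nu:\abs{\nu}(S)\le R\}$, and concludes by uniform continuity on the compact product $X\times\Bb_R$. You instead stay on $X$ alone, using the uniform continuity of $\psi$ on $X\times S$ to get equicontinuity of $\{F_n\}\cup\{F\}$ directly, and then finish with a finite-cover/three-$\varepsilon$ argument. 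Your route is slightly more elementary (it does not invoke the metrizability of the weak-* ball), while the paper's route is more conceptual and would adapt immediately to other parameter spaces; both are equally short and rely on the same essential ingredients.
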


\begin{proof}
  
  We note that the mapping $(\eta,\nu)\mapsto \int_S \eta\d \nu$ is continuous on $\Cder{}(S)\times \Mm(S)$. Since $x\mapsto \psi(x,\cdot)$ is continuous from $X$ to $\Cder{}(S)$, the mapping 
\begin{align}
  F: (x,\nu)\longmapsto   \int_S \psi(x,s)\d \nu(s)
\end{align}
is continuous on $X\times \Mm(S)$. 
  
Now, since $S$ is compact, $\Mm(S)$ is the dual of the Banach space $\Cder{}(S)$, and the Banach-Steinhaus theorem implies that there exists $R>0$ such that $\sup_n |\mun|(S)\leq R$ (and $|\mu|(S)\leq R$).

The subspace $\Bb_R\eqdef\enscond{\nu\in\Mm(S)}{ |\nu|(S)\leq R}$ is metrizable for the weak-* topology and compact. As a result, the mapping $F$ is uniformly continuous on the compact $X\times \Bb_R$. In particular, as $\mun\to \mu$ in $\Bb_R$,
\begin{align*}
  \sup_{x\in X} \abs{\int_{S}\psi(x,s)\d\mun(s)-\int_{S}\psi(x,s)\d\mu(s)}\to 0. 
\end{align*}
\end{proof}

Figure~\ref{sec:laplace-fig:etaWapprox} illustrates this convergence between the precertificates in the unnormalized case.

\begin{figure}[!htb]
\centering
\subfigure[$K=10$]{\includegraphics[width=0.32\linewidth]{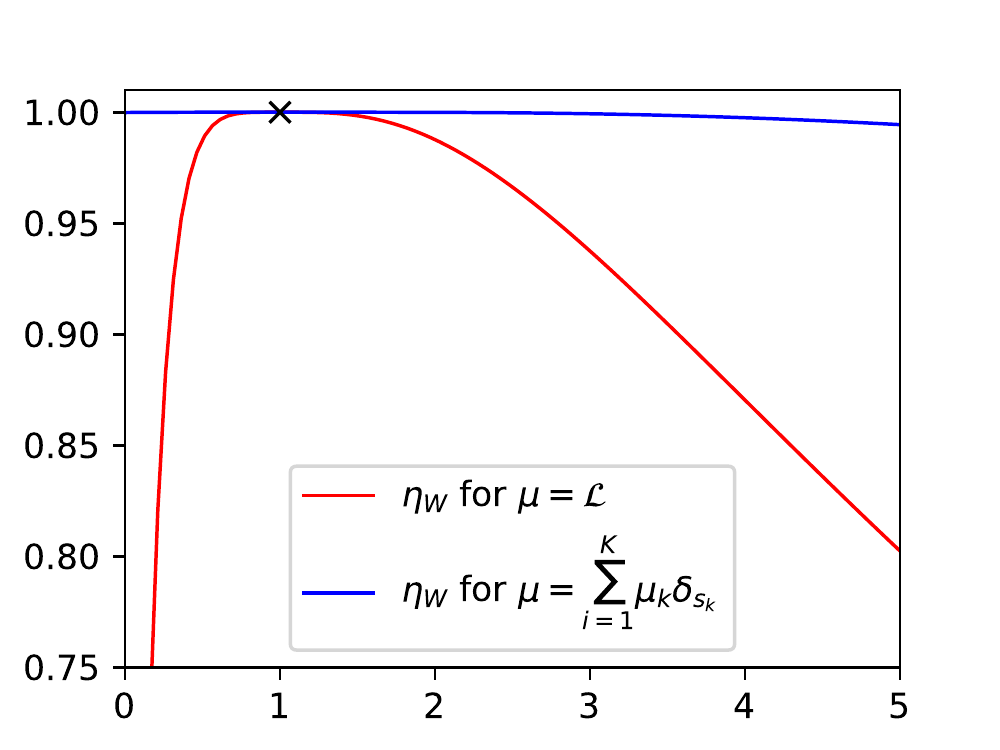}}
\subfigure[$K=120$]{\includegraphics[width=0.32\linewidth]{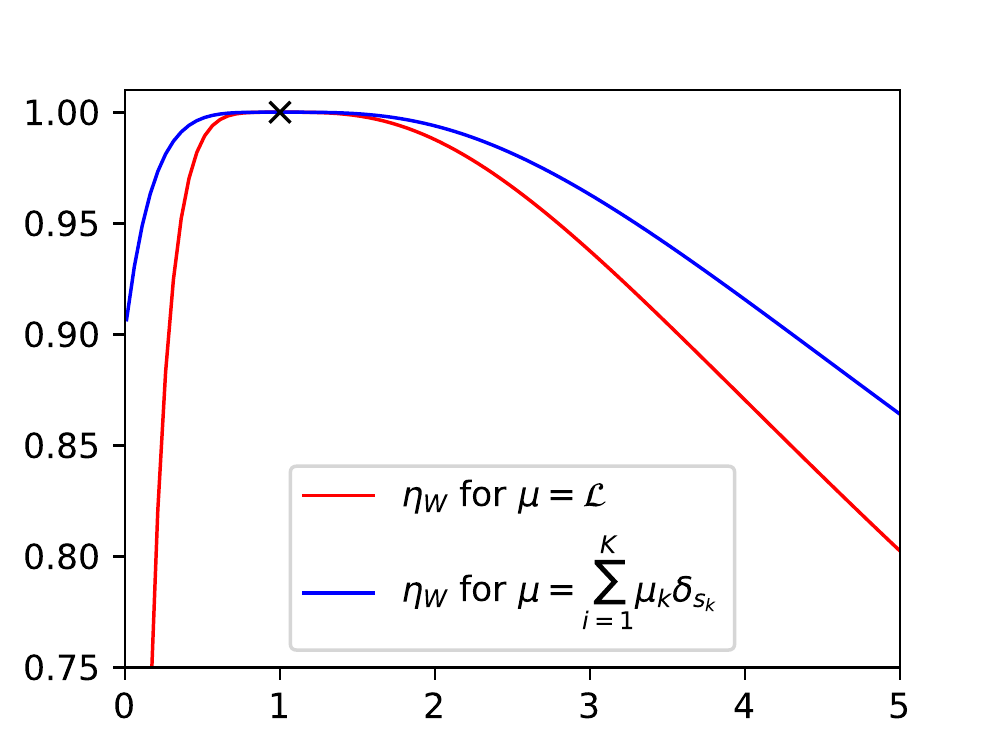}}
\subfigure[$K=800$]{\includegraphics[width=0.32\linewidth]{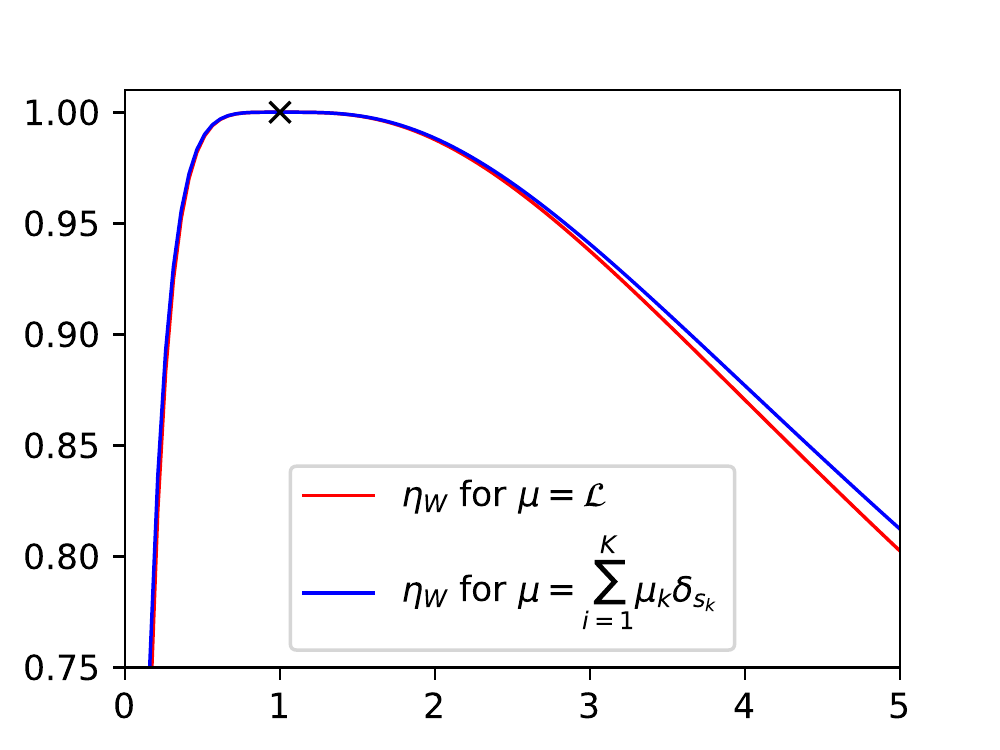}}
\caption{\label{sec:laplace-fig:etaWapprox}Approximation of $\etaW$ for the unnormalized continuous Laplace operator (see Proposition~\ref{prop:etaW-unnorm-Laplace}) by the $\etaW$ obtained for discretized unnormalized Laplace operators.}
\end{figure}

%
\section{The \adcgshort~Algorithm}\label{sec:sfw}

In this section, we present the \adcg~(see Algorithm~\ref{sec:sfw-alg:sfw}), a new version of the modified Frank-Wolfe algorithm introduced in~\cite{bredies-inverse2013}. Moreover, we prove in Theorem~\ref{sec:sfw-thm:cvksteps} that it converges in a finite number of steps under mild assumptions. The code can be found in~\url{https://github.com/qdenoyelle}.

We suppose in this section that $\Pos\subset\RR^d$ is compact, or $\Pos=\TT^d$ with $d\in\NN^*$ and $\phi\in\kernel{2}$ (see Definition~\ref{sec:intro-def:admkernel}).

\subsection{The Algorithm}\label{sec:sfw-subsec:greedy}

\myparagraph{Frank-Wolfe Algorithm}
The Frank-Wolfe (FW) algorithm~\cite{frank-fw1956}, also called the Conditional Gradient Method (CGM)~\cite{levitin-constrained1966} solves the following optimization problem
\begin{equation}\label{eq:minfw}
\min_{m \in C}\ f(m),
\end{equation}
where $C$ is a weakly compact convex set of a Banach space, and $f$ is a differentiable convex function. 
For instance, in the case of sparse recovery problems, $m$ is a measure and $C$ is a subset of $\Mm(X)$.
A chief advantage of FW with respect to most first order optimization scheme (such as gradient descent or proximal splitting method) is that it does not rely on any underlying Hilbertian structure and only makes use of directional derivatives. It is thus particularly well adapted to optimize over the space of Radon measures.
The algorithm is detailed in Algorithm~\ref{sec:sfw-alg:fw}.
\begin{algorithm}
    \caption{Frank-Wolfe Algorithm}
    \label{sec:sfw-alg:fw}
    \begin{algorithmic}[1]
    \For{$k=0,\ldots, n$}
    \State\label{fw-greedy-step} Minimize: $\iter{s}\ni\mbox{argmin}_{s\in C} f(\iter{m})+df(\iter{m})[s-\iter{m}]$.
    \If{$df(\iter{m})[\iter{s}-\iter{m}]=0$}\label{stopping-conditionFW}
       \State $\iter{m}$ solution of \eqref{eq:minfw}. Stop.
    \Else{}
    \State\label{fw-stepresearch} Step research: $\iter{\ga}\gets \frac{2}{k+2}$ or $\iter{\ga}\ni\mbox{argmin}_{\gamma\in [0,1]} f(\iter{m}+\gamma(\iter{s}-\iter{m}))$.
    \State\label{fw-tentativeupdate} Update: $\iterpo{m}\gets \iter{m}+\iter{\ga}(\iter{s}-\iter{m})$. 
    \EndIf
    \EndFor
    \end{algorithmic}
\end{algorithm}

Let us note that the FW algorithm is naturally endowed with a stopping criterion in Step~\ref{stopping-conditionFW} (see for instance~\cite[Ch. 3, Sec.1.2]{demyanov-1970approximate}) which is equivalent to the standard optimality condition for constrained convex problems
\begin{align}\label{eq:fwoptimality}
 \forall s\in C,\quad df(\iter{m})[s-\iter{m}]\geq 0. 
\end{align}

\myparagraph{Frank-Wolfe for the BLASSO}
The FW algorithm cannot be applied directly to the BLASSO because it is an optimization problem over $\radon$ which is not bounded and the objective function
\begin{align}\label{sec:sfw:eq-fobj}
	\forall m\in\radon, \quad \fobj(m)\eqdef\frac{1}{2}\normObs{\Phi m-\obsw}^2+\la\normTVX{m},
\end{align}
is not differentiable. Instead, we propose to consider an equivalent problem to the BLASSO, using an epigraphical lift (following an idea of~\cite{harchaoui-2015conditional}), which is presented in Lemma~\ref{sec:sfw-lem:blasso-eq}.

\begin{lem}\label{sec:sfw-lem:blasso-eq}
The BLASSO
\begin{align}\label{sec:sfw-def:blasso}
	\umin{m\in\radon} \fobj(m) \eqdef \frac{1}{2}\normObs{\Phi m-y}^2+\la \normTVX{m}.\tag{$\blasso$}
\end{align}
 is equivalent to
\begin{align}\label{sec:sfw:blassoeq}\tag{$\blassoeq$}
	\umin{(t,m)\in C} \tfobj(m,t) \eqdef \frac{1}{2}\normObs{\Phi m-y}^2+\la t, 
\end{align}
where we defined $C \eqdef \enscond{(t,m)\in\RR_+\times\radon}{ \normTVX{m}\leq t\leq M }$ and  $M\eqdef\frac{\normObs{\obsw}^2}{2\la}$. 
\end{lem}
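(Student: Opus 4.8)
The plan is to prove the two statements that make up the word \emph{equivalent}: that the two problems share the same optimal value, and that their minimizers are in one-to-one correspondence via $m\mapsto(\normTVX{m},m)$. The whole argument rests on the defining inequality $\normTVX{m}\le t$ of the set $C$ together with a single comparison with the trivial candidate $m=0$, which is where the precise value $M\eqdef\normObs{\obsw}^2/(2\la)$ enters.

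First I would record the elementary bound that drives everything. For any feasible pair $(t,m)\in C$ one has $t\ge\normTVX{m}$, hence
\[
\tfobj(m,t)=\frac12\normObs{\Phi m-y}^2+\la t\ge \frac12\normObs{\Phi m-y}^2+\la\normTVX{m}=\fobj(m),
\]
with equality if and only if $t=\normTVX{m}$. Taking the infimum over $C$ gives $\inf_{C}\tfobj\ge\inf_{\radon}\fobj$ at once.

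Second, I would show that the cap $t\le M$ is harmless, which yields the reverse inequality. Since $\fobj(m)\ge\la\normTVX{m}$ and $\inf_{\radon}\fobj\le\fobj(0)=\frac12\normObs{y}^2=\la M$, any measure with $\normTVX{m}>M$ satisfies $\fobj(m)>\la M\ge\inf_{\radon}\fobj$ and is therefore irrelevant to the infimum; thus $\inf_{\radon}\fobj=\inf_{\normTVX{m}\le M}\fobj$. For each such $m$ the pair $(\normTVX{m},m)$ lies in $C$ and satisfies $\tfobj(\normTVX{m},m)=\fobj(m)$, so $\inf_{C}\tfobj\le\inf_{\normTVX{m}\le M}\fobj=\inf_{\radon}\fobj$. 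Combining the two directions gives $\inf_{C}\tfobj=\inf_{\radon}\fobj$.

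Finally, I would transfer this to the minimizers. If $m^\star$ solves the BLASSO then $\normTVX{m^\star}\le M$ by the same comparison with $0$, so $(\normTVX{m^\star},m^\star)\in C$ attains the common value and solves the lifted problem. Conversely, if $(t^\star,m^\star)$ solves the lifted problem, the chain $\inf_{\radon}\fobj=\tfobj(m^\star,t^\star)\ge\fobj(m^\star)\ge\inf_{\radon}\fobj$ collapses to equalities, so $m^\star$ minimizes $\fobj$ and the equality case of the driving inequality forces $t^\star=\normTVX{m^\star}$. The only genuinely delicate step is the harmless-cap argument: one must check that the upper bound $t\le M$ never truncates an optimal measure, and this is exactly what the choice $M=\normObs{\obsw}^2/(2\la)$ guarantees through the comparison $\fobj(m)\ge\la\normTVX{m}$ against $\fobj(0)$.
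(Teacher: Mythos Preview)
Your proof is correct and follows essentially the same approach as the paper: bound the minimizers via the comparison $\fobj(m^\star)\le\fobj(0)=\la M$, then read off the epigraphical lift. Your write-up is simply more detailed, spelling out both inequalities between the infima and the bijection between minimizers, whereas the paper condenses this to a two-line sketch.
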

The equivalence stated in Lemma~\ref{sec:sfw-lem:blasso-eq} is to be understood in the following sense: $m$ is a solution to~\eqref{sec:sfw-def:blasso} if and only if $(t,m)$ is a solution to~\eqref{sec:sfw:blassoeq} for some $t\geq 0$. Moreover, in that case $t=\normTVX{m}$ and $\tfobj(m,t)= \fobj(m)$.
 As a result,  one can directly translate the FW algorithm (see Algorithm~\ref{sec:sfw-alg:fw}) to $\blasso$.

\begin{proof}
Let $\mlimit$ be a minimizer of $\fobj$ on $\radon$, then we have
\begin{equation}
	\fobj(\mlimit)\leq \fobj(0)=\la M.
\end{equation}
Hence, one can restrict the BLASSO to the set of measures $m\in\radon$ such that $\normTVX{m}\leq M$ and  $\blassoeq$ is obtained using an epigraphical representation.
\end{proof}

The next two remarks discuss the applicability of standard results on FW to the BLASSO.
\begin{rem}[Well-posedness]
The FW algorithm is well defined for $\blassoeq$. Indeed, $\tfobj$ is a differentiable functional on the Banach space $\RR\times\radon$, with differential 
\begin{equation}
\d\tfobj(t,m): (t',m')\longmapsto\int_\Pos \Phi^*(\Phi m-\obsw)\d m' + \la t'.
\end{equation}
Although $C$ is not weakly compact (otherwise, by the Eberlein-Shmulyan theorem, $\radon$ would be reflexive), it is compact for the weak-* topology: as $\d\tfobj(t,m)$ is represented by $(\la,\Phi^*(\Phi m-\obsw))\in \RR\times \ContX(\Pos)$, it does reach its minimum on $C$.
\end{rem}
\begin{rem}[Rate of convergence]
Let us note that $\d\tfobj$ is Lipschitz continuous (because $\phi\in\kernel{2}$), 
 hence by classical results for the study of the convergence of the FW algorithm, one obtains  the $O(1/k)$ rate of convergence in the objective function for any minimizing sequence for the BLASSO.
\end{rem}
\begin{lem}[\protect{\cite[Th. 3.1.7]{demyanov-1970approximate}}]\label{sec:sfw-lem:fw-blasso-rate}
	Let $(t_k,\iter{m})_{k\in\NN}$ be a sequence generated by Algorithm~\ref{sec:sfw-alg:fw} applied to $\blassoeq$. Then, there exists $C_1>0$ such that for any $\mlimit$ solution of $\blasso$ we have
	\begin{equation}
	\forall k\in\NN^*, \quad \fobj(\iter{m})-\fobj(\mlimit)\leq \frac{C_1}{k}.
	\end{equation}
\end{lem}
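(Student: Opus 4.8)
The plan is to run the classical analysis of the Frank--Wolfe method on the lifted problem $\blassoeq$ and then transfer the resulting bound back to $\blasso$ via the equivalence of Lemma~\ref{sec:sfw-lem:blasso-eq}. Write $z^{(k)}\eqdef(t_k,\iter{m})\in C$ for the iterates and fix a minimizer $z^\star\eqdef(\normTVX{\mlimit},\mlimit)$ of $\tfobj$ on $C$ (it exists because $C$ is weak-* compact and $\tfobj$ attains its minimum, as observed in the well-posedness remark). Set the optimality gap $h_k\eqdef\tfobj(z^{(k)})-\tfobj(z^\star)\geq 0$, which is well defined since every $z^{(k)}$, being a convex combination of points of $C$, stays in $C$. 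Because $\d\tfobj$ is Lipschitz on the bounded set $C$ (rate-of-convergence remark, using $\phi\in\kernel{2}$), the curvature constant
\[
	C_f\eqdef\sup_{\substack{z,s\in C,\ \gamma\in(0,1]\\ w=z+\gamma(s-z)}}\frac{2}{\gamma^2}\pa{\tfobj(w)-\tfobj(z)-\gamma\,\d\tfobj(z)[s-z]}
\]
is finite; concretely $C_f\leq L\,(\mathrm{diam}\,C)^2<+\infty$, where $L$ is the Lipschitz constant and $\mathrm{diam}\,C$ is finite because $\normTVX{m}\leq t\leq M$ on $C$.

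The heart of the argument is two inequalities per iteration. First, since $\iter{s}$ minimizes the linear form $s\mapsto\d\tfobj(z^{(k)})[s-z^{(k)}]$ over $C$ (Step~\ref{fw-greedy-step}) and $z^\star\in C$, convexity of $\tfobj$ gives
\[
	\d\tfobj(z^{(k)})[\iter{s}-z^{(k)}]\leq \d\tfobj(z^{(k)})[z^\star-z^{(k)}]\leq \tfobj(z^\star)-\tfobj(z^{(k)})=-h_k,
\]
so the Frank--Wolfe slope dominates the gap. Second, the definition of $C_f$ with $\gamma=\gamma_k$ and $s=\iter{s}$ yields the quadratic upper bound $\tfobj(z^{(k+1)})\leq\tfobj(z^{(k)})+\gamma_k\,\d\tfobj(z^{(k)})[\iter{s}-z^{(k)}]+\tfrac{\gamma_k^2}{2}C_f$; for the line-search variant this still holds because $\gamma_k=\tfrac{2}{k+2}$ is an admissible competitor. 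Subtracting $\tfobj(z^\star)$ and combining the two inequalities gives the recursion $h_{k+1}\leq(1-\gamma_k)h_k+\tfrac{\gamma_k^2}{2}C_f$.

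Plugging $\gamma_k=\tfrac{2}{k+2}$ into this recursion, a straightforward induction shows $h_k\leq\tfrac{2C_f}{k+2}$ for all $k\geq 1$: the base step $k=0$ uses $\gamma_0=1$, whence $h_1\leq\tfrac{C_f}{2}\leq\tfrac{2C_f}{3}$, and the inductive step reduces to the elementary inequality $(k+1)(k+3)\leq(k+2)^2$. It remains to transfer the bound to $\fobj$. By Lemma~\ref{sec:sfw-lem:blasso-eq}, any solution $\mlimit$ of $\blasso$ satisfies $\tfobj(z^\star)=\fobj(\mlimit)=\min\fobj$, while the feasibility constraint $\normTVX{\iter{m}}\leq t_k$ gives
\[
	\fobj(\iter{m})=\tfrac12\normObs{\Phi\iter{m}-\obsw}^2+\la\normTVX{\iter{m}}\leq\tfrac12\normObs{\Phi\iter{m}-\obsw}^2+\la t_k=\tfobj(z^{(k)}).
\]
Hence $\fobj(\iter{m})-\fobj(\mlimit)\leq h_k\leq\tfrac{2C_f}{k+2}\leq\tfrac{2C_f}{k}$, and the claim follows with $C_1\eqdef 2C_f$.

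The only genuinely delicate point is that we work in an infinite-dimensional, non-Hilbertian space, so one must guarantee finiteness of the curvature constant without invoking any inner product. This is exactly where the hypotheses enter: boundedness of $C$ comes from the epigraphical truncation $t\leq M$ in Lemma~\ref{sec:sfw-lem:blasso-eq}, and Lipschitz continuity of $\d\tfobj$ comes from $\phi\in\kernel{2}$. Everything else is standard Frank--Wolfe bookkeeping that uses only directional derivatives and convex combinations, and is therefore insensitive to the ambient geometry; this is precisely the content of~\cite[Th.~3.1.7]{demyanov-1970approximate}, which one may alternatively cite directly once its hypotheses (convexity, weak-* compactness, finite curvature) have been verified.
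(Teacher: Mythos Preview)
Your proof is correct. The paper itself does not prove this lemma; it simply invokes it as a classical result from~\cite[Th.~3.1.7]{demyanov-1970approximate}, after having verified in the preceding remarks that the hypotheses (weak-* compactness of $C$, Lipschitz differential from $\phi\in\kernel{2}$) are satisfied. You have supplied a self-contained reproduction of that classical argument: the curvature bound, the Frank--Wolfe recursion $h_{k+1}\leq(1-\gamma_k)h_k+\tfrac{\gamma_k^2}{2}C_f$, and the standard induction with $\gamma_k=\tfrac{2}{k+2}$. The transfer step from $\tfobj$ back to $\fobj$ via $\normTVX{\iter{m}}\leq t_k$ is exactly right and is the one place where the epigraphical lift of Lemma~\ref{sec:sfw-lem:blasso-eq} is actually used.

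One small remark: for this particular $\tfobj$, the second-order remainder is explicitly $\tfrac{\gamma^2}{2}\normObs{\Phi(m_s-m)}^2$, so one can compute $C_f\leq 4M^2\sup_{x\in\Pos}\normObs{\phi(x)}^2$ directly without invoking a general Lipschitz-to-curvature bound. This is a minor simplification but confirms that your finiteness claim is unproblematic in this infinite-dimensional, non-Hilbertian setting.
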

Next, we discuss how the minimization step yields a greedy approach and a natural stopping criterion. The following two remarks also crucially relate the algorithm to the dual certificate of~\eqref{eq:defblasso}.
\begin{rem}[Greedy approach]
Obviously, the FW algorithm is only interesting if, in step~\ref{fw-greedy-step} of Algorithm~\ref{sec:sfw-alg:fw}, one is able to minimize the linear form $s\mapsto \d\tfobj(\iter{t},\iter{m})[s]$ on $C$. That linear form reaches its minimum at least at one extreme point of $C$, \ie{} $s=(0,0)$ or points of the form  
$s=\pa{M,\pm M\dirac{\pos}}$ for $x\in \Pos$. 
Finding a minimizer among those points amounts to finding a point $x$ in
\begin{align*}
  &\argmin_{x\in \Pos} \left(\pm \frac{1}{\la}\left(\Phi^*(\obsw-\Phi \iter{m})\right)(x)+1\right) \la M,\\
  \mbox{or equivalently in}\quad
  &\argmax_{x\in\Pos}\left(\abs{\iter{\eta}(x)}-1\right) \qwhereq  \iter{\eta}\eqdef {\frac{1}{\la}\left(\Phi^*(\obsw-\Phi \iter{m})\right)}
\end{align*}
(note the similarity of $\iter{\eta}$ with the dual certificate defined in~\eqref{eq:certifdual}).

As a consequence, at each Step~\ref{fw-tentativeupdate} of Algorithm~\ref{sec:sfw-alg:fw}, a new spike is created at some point in $\argmax_\Pos\abs{\iter{\eta}}$ (unless $s=(0,0)$ is optimal, which means that $\normLi{\iter{\eta}}\leq 1$).
This spike creation step is at the core of the algorithms in \cite{bredies-inverse2013} and~\cite{boyd-adcg2015}.
\end{rem}

\begin{rem}[Stopping criterion]\label{rem:stopfw}
  It is interesting to relate the stopping criterion \eq{(\iter{t},\iter{m})\in \argmin_{s\in C}  \d\tfobj(\iter{t},\iter{m})[s],} with the dual certificates for~\eqref{eq:defblasso}. As noted above (see Equation~\ref{eq:fwoptimality}), the stopping criterion is equivalent to $(\iter{t},\iter{m})$ being a solution, hence $\iter{t}=\normTVX{\iter{m}}$. If $\iter{m}\neq 0$, without loss of generality we write $\iter{m}=\sum_{i=1}^{\iter{N}} \iter{a}_i\delta_{\iter{x}_i}$ where the $\iter{x}_i$'s are distinct, so that $\iter{t}=\normTVX{\iter{m}}=\sum_{i}\abs{\iter{a}_i}$. We also set $\iter{\varepsilon}_i\eqdef \sign(\iter{a}_i)$ and $L\eqdef \d\tfobj(\iter{t},\iter{m})$.
  
  Assume first that $\normTVX{\iter{m}}<M$, so that the smallest face of $C$ which contains $(\iter{t},\iter{m})$ is 
  \begin{align*}
   F\eqdef \mathrm{conv} \left\{(0,0),(M,M\iter{\varepsilon}_1\delta_{\iter{x}_1}),\ldots, (M,M\iter{\varepsilon}_{\iter{N}}\delta_{\iter{x}_{\iter{N}}})\right\}.
  \end{align*}
  Since $\argmin_{s\in C} L$ is a face of $C$ containing $(\iter{t},\iter{m})$ (see~\cite[Sec. 18]{rockafellar2015convex}), it must contain $F$. Hence
  \begin{align}\label{eq:sfwminL}
    L(0,0)=L(M,M\iter{\varepsilon}_1\delta_{\iter{x}_1})=\cdots= L(M,M\iter{\varepsilon}_{\iter{N}}\delta_{\iter{x}_{\iter{N}}})=\min_{C}L.
  \end{align}
  Now, if $\normTVX{\iter{m}}=M$, it means that $\tfobj(\iter{t},\iter{m})=\tfobj(0,0)$, so that by convexity of $\tfobj$ and optimality of $(\iter{t},\iter{m})$ one has $L(\iter{t},\iter{m})=\d\tfobj(\iter{t},\iter{m})[\iter{t},\iter{m}]=0=L(0,0)$. As the smallest face which contains $(\iter{t},\iter{m})$ is
  \eq{
  F'\eqdef \mathrm{conv} \left\{(M,M\iter{\varepsilon}_1\delta_{\iter{x}_1}),\ldots, (M,M\iter{\varepsilon}_{\iter{N}}\delta_{\iter{x}_{\iter{N}}})\right\},
  }
  we deduce as above that~\eqref{eq:sfwminL} holds.

  In particular, $L(0,0)\leq \inf_{x\in\Pos} L(M,\pm M\delta_{x})$ yields
  \begin{align}\label{eq:sfwzeroopt}
  0\leq \inf_{x\in\Pos} \left(-\abs{\iter{\eta}(x)}+1 \right),
  \end{align}
  that is $\normLi{\iter{\eta}}\leq 1$. Moreover $L(\iter{t},\iter{m})=\sum_{j=1}^{\iter{N}} \abs{\iter{a}_j} L\left(M,M\iter{\varepsilon}_j\delta_{\iter{x}_j}\right)\leq \sum_{j=1}^N\abs{\iter{a}_j} L(M,\pm M\delta_{\iter{x}_j})$ , yields
\begin{align*}
  -\sum_{j=1}^{\iter{N}} \iter{a}_j\iter{\eta}(\iter{x}_j) \leq -\sum_{j=1}^{\iter{N}} \abs{\iter{a}_j}\abs{\iter{\eta}(\iter{x}_j)},
\end{align*}
from which we deduce $\iter{\eta}(\iter{x}_j)=\sign(\iter{a}_j)$.

As a result, when the FW algorithm stops (if it does), we observe that \emph{the quantity $\iter{\eta}$ it has constructed is the dual certificate} for~\eqref{eq:defblasso}. If $\iter{m}=0$, the argument is similar (as~\eqref{eq:sfwzeroopt} must hold).
\end{rem}


\myparagraph{The \adcg~algorithm}
Applying directly Algorithm~\ref{sec:sfw-alg:fw} yields a sequence of measures $(\iter{m})_{k\in \NN}$ which weakly-* converges towards some solution $\mlimit$ in a greedy way. But the generated measures $\iter{m}$ are not very sparse compared to $\mlimit$, each Dirac mass of $\mlimit$ being approximated by a multitude of Dirac masses of $\iter{m}$ with inexact positions. 
It is therefore suggested in~\cite{bredies-inverse2013}, and strongly advocated in~\cite{boyd-adcg2015}, to  modify the Frank-Wolfe algorithm for the resolution of the BLASSO and to let the Dirac positions move. 

One important feature of the FW algorithm, as noted in~\cite{jaggi2013revisiting,boyd-adcg2015}, is that in the update step~\ref{fw-tentativeupdate}, \emph{the point $\iterpo{m}$ may be replaced with any point $m\in C$ which has lower energy}, without breaking the convergence property and the convergence rate. The Frank-Wolfe algorithm with our modified update step is described in Algorithm~\ref{sec:sfw-alg:sfw}, we call it the \adcg{} (\adcgshort)~algorithm. Since the $t$ variable is only auxiliary in~\eqref{sec:sfw:blassoeq}, we omit it and we formulate directly Algorithm~\ref{sec:sfw-alg:sfw} in terms of $m$ only.

\begin{algorithm}[t]
    \caption{\ADCG~Algorithm}
    \label{sec:sfw-alg:sfw}
    \begin{algorithmic}[1]
    \State Initialize with $\iterO{m}=0$ and $n=0$.
    \For{$k=0,\ldots,n$}
    \State\label{computeNextPos}$\iter{m}=\sum_{i=1}^{\iter{N}} \iter{\amp_i} \dirac{\iter{\pos_i}}$, $\iter{\amp_i}\in\RR$, $\iter{\pos_i}$ pairwise distincts, find $\iter{\pos_*}\in\Pos$ s.t.:
    \eq{%
    	\iter{\pos_*}\in \mathrm{arg} \,  \underset{\pos\in\Pos}{\mathrm{max}}\ |\iter{\eta}(x)| \qwhereq \iter{\eta}\eqdef\frac{1}{\la}\Phi^*(y-\Phi \iter{m}),
} 
    \If{$|\iter{\eta}(\iter{\pos_*})|\leq 1$}\label{stopping-condition}
    \State $\iter{m}$ is a solution of $\blasso$. Stop.
    \Else{}
    \State\label{computeNextAmp} Obtain $\iterph{m}=\sum_{i=1}^{\iter{N}} \iterph{\amp_i}\dirac{\iter{\pos_i}}+\iterph{\amp_{\iter{N}+1}}\dirac{\iter{\pos_*}}$, s.t.: 
    \begin{align*}
	\iterph{\amp}\in \mathrm{arg} \, \underset{{\amp} \in\RR^{\iter{N}+1}}{\mathrm{min}}\ \frac{1}{2}\normObs{\Phi_{\iterph{\pos}} {\amp}-y}^2+\la \normu{{\amp}}
	 \\
	\qwhereq \iterph{\pos}=(\iter{\pos_1},\ldots,\iter{\pos_{\iter{N}}},\iter{\pos_*})
    \end{align*}
    \State\label{BFGS} Obtain $\iterpo{m}=\sum_{i=1}^{\iter{N}+1} \iterpo{\amp_i}\dirac{\iterpo{\pos_i}}$, s.t.:
    \eq{%
	(\iterpo{\amp},\iterpo{\pos})\in\underset{({\amp},{\pos})\in\RR^{\iter{N}+1}\times\Pos^{\iter{N}+1}}{\mathrm{arg \, min}}\ \frac{1}{2}\normObs{\Phi_{{\pos}} {\amp}-y}^2+\la \normu{{\amp}},
    }
    using a non-convex solver initialized with $(\iterph{\amp},\iterph{\pos})$.
    \State Eventually remove zero amplitudes Dirac masses from $\iterpo{m}$.
    \EndIf
    \EndFor
    \end{algorithmic}
\end{algorithm}

As we detail below, the algorithm slightly (but crucially) differs from the one in~\cite{boyd-adcg2015}. The main ingredient is to replace the final update with the minimization of a non-convex minimization problem updating both the positions and the amplitudes of the spikes (whereas~\cite{boyd-adcg2015} update successively the amplitudes and the positions).

\begin{rem}[Links between FW applied to $\blassoeq$ and the~\adcgshort]\label{sec:sfw-rem:links}
  Algorithm~\ref{sec:sfw-alg:sfw} is a valid variant of FW, as the update step decreases more the energy than the standard convex combination using $\iter{\ga}$. Indeed,
\eq{%
	\fobj(\iterpo{m})\leq \fobj(\iterph{m})\leq \fobj(\iter{m}+\iter{\ga}(\sign(\iter{\eta}(\iter{\pos_*}))M\dirac{\iter{\pos_*}}-\iter{m})).
}

It is noteworthy that other forms were previously used in~\cite{bredies-inverse2013,boyd-adcg2015}, but, to our knowledge, the update procedure (Steps~\ref{computeNextAmp} and~\ref{BFGS}) described in the present paper is new. As we show in Theorem~\ref{sec:sfw-thm:cvksteps}, optimizing over \emph{both the amplitudes and the positions} is essential to prove the convergence of the algorithm in a finite number of iterations.
\end{rem}

\begin{rem}[Stopping criterion of the~\adcgshort]
  One may observe that the condition $df(\iter{m})[\iter{s}-\iter{m}]=0$ of Algorithm~\ref{sec:sfw-alg:fw} (or equivalently $\iter{m}\in \argmin_{s\in C} \d f(\iter{m})[s]$) has been replaced with $|\iter{\eta}(\iter{\pos_*})|\leq 1$. In fact the optimality conditions for the non-convex local descent (Step~\ref{BFGS}) at iteration $k-1$ imply 
\eq{
  \forall i\in\{1,\ldots,\iter{N}\}, \quad \iter{\eta}(\iter{\pos}_i)=\sign(\iter{\amp}_i), 
}
whereas $|\iter{\eta}(\iter{\pos_*})|\leq 1$ implies $\normLi{\iter{\eta}}\leq 1$, hence $\iter{\eta}$ is a valid dual certificate. 

With the words of Remark~\ref{rem:stopfw}, Step~\ref{BFGS} implies that $L\left(M,M\iter{\varepsilon}_j\delta_{\iter{x}_j}\right)=0$ for $1\leq j\leq \iter{N}$, whereas the condition $|\iter{\eta}(\iter{\pos_*})|\leq 1$ means $0=L(0,0)=\min_{C}L$. As $m$ is a convex combination of those points, we deduce that $(\normTVX{\iter{m}},\iter{m})\in\argmin_C L$, that is the optimality condition~\eqref{eq:fwoptimality}.
\end{rem}

\begin{rem}[Adaptation for the positive BLASSO]
In many applications, one is often interested in recovering positive spikes (see for example in Section~\ref{sec:microscopy}). As a result, in these cases it is better to add a positivity constraint $m\geq 0$ to the BLASSO. This leads to several changes in Algorithm~\ref{sec:sfw-alg:sfw}
\begin{itemize}
\item the stopping condition $|\iter{\eta}(\iter{\pos_*})|\leq 1$ becomes $\iter{\eta}(\iter{\pos_*})\leq 1$,
\item the LASSO is solved on $\RR_+^{\iter{N}+1}$,
\item the optimization problem of Step~\ref{BFGS} is solved on $\RR_+^{\iter{N}+1}\times\Pos^{\iter{N}+1}$.
\end{itemize}
\end{rem}
%

\myparagraph{Implementation details}
\begin{itemize}
\item A Newton method, initialized by a grid search, is used to to find the maximum of $|\iter{\eta}|$ over the compact domain $\Pos$ in step~\ref{computeNextPos}. The size of the grid depends on the operator~$\Phi$. For example, when $\Phi$ is the convolution by the Dirichlet kernel with cutoff frequency $f_c$, we choose a number of points proportional to $f_c$.
\item The LASSO problem at step~\ref{computeNextAmp} is solved using the fast iterative shrinkage thresholding algorithm (FISTA)~\cite{beck-fista2009}.
\item To solve the non-convex optimization problem at step~\ref{BFGS}, we deploy a bounded BFGS. It allows to enforce the positions $x_i$ to be in the compact domain $\Pos$ and to preserve the sign of the amplitudes $a_i$. These constraints ensure the differentiability of the objective function which is required by BFGS.
\end{itemize}

\subsection{Study of the Convergence of the \adcgshort~Algorithm}\label{sec:sfw-subsec:conv-sfw}
We now study the convergence properties of the \adcg~algorithm presented last section (see Algorithm~\ref{sec:sfw-alg:sfw}). Our main result is Theorem~\ref{sec:sfw-thm:cvksteps} where one shows that if $\etaLL=\frac{1}{\lambda}\Phi^*(y-\Phi \meas)$, where $\meas=\sum_{i=1}^N \amp_i \dirac{\pos_i}$, is the unique solution of $\blasso$ and is nondegenerate (see Equation~\eqref{sec:sfw-eq:etaLnondegen}), then Algorithm~\ref{sec:sfw-alg:sfw} recovers $\meas$ in a finite number of iterations. But, first, one shows that our algorithm produces a sequence of measures $(\iter{m})_{k\in\NN}$ that converges towards $\mlimit$ (if $\mlimit\in\radon$ is the unique solution of the BLASSO) for the weak-* topology on $\radon$.

\begin{prop}\label{sec:sfw-prop:cvfaible}
  Let $(\iter{m})_{k\in\NN}$ be the sequence obtained from the \adcg~algorithm. Then it has an accumulation point for the weak-* topology on $\radon$, and that point is a solution to~\eqref{sec:sfw-def:blasso}.
\end{prop}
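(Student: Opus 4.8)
The plan is to combine weak-* compactness of a sublevel set with the weak-* lower semicontinuity of $\fobj$, after checking that the \adcgshort{} produces a minimizing sequence. First I would note that the energy is non-increasing along the iterations: the descent inequality of Remark~\ref{sec:sfw-rem:links} gives $\fobj(\iterpo{m}) \leq \fobj(\iter{m})$, so that $\fobj(\iter{m}) \leq \fobj(\iterO{m}) = \fobj(0) = \la M$ for every $k$. Since $\la \normTVX{\iter{m}} \leq \fobj(\iter{m})$, this forces $\normTVX{\iter{m}} \leq M$. Thus all iterates lie in the closed ball $\enscond{m\in\radon}{\normTVX{m}\leq M}$, which is weak-* compact by the Banach--Alaoglu theorem. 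As $\ContX(\Pos,\RR)$ is separable, this ball is moreover weak-* metrizable, hence sequentially compact, so $(\iter{m})_{k}$ admits a weak-* convergent subsequence $\iter{m}_{k_j} \stackrel{*}{\rightharpoonup} m^\star$. This already produces the desired accumulation point.

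It remains to identify $m^\star$ as a minimizer, which I would do in two steps. \emph{(Minimizing sequence.)} Although Lemma~\ref{sec:sfw-lem:fw-blasso-rate} is stated for the plain Frank--Wolfe scheme, its proof uses only the descent inequality $\fobj(\iterpo{m}) \leq \fobj(\iter{m}+\iter{\ga}(\iter{s}-\iter{m}))$ with $\iter{\ga}=2/(k+2)$, which is precisely the valid-variant property recorded in Remark~\ref{sec:sfw-rem:links}. Hence the $O(1/k)$ rate transfers verbatim, and $\fobj(\iter{m}) \to \inf_{\radon} \fobj$. \emph{(Lower semicontinuity.)} The total variation term, being a supremum of weak-* continuous linear forms (see~\eqref{sec:intro-eq:TV}), is weak-* lsc. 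For the data term I would use admissibility of $\phi$ (Definition~\ref{sec:intro-def:admkernel}): for every fixed $p\in\Obs$ the map $\pos\mapsto\dotObs{\phi(\pos)}{p}$ lies in $\ContX(\Pos,\RR)$, so $\dotObs{\Phi \iter{m}_{k_j}}{p}\to\dotObs{\Phi m^\star}{p}$; thus $\Phi \iter{m}_{k_j}\rightharpoonup\Phi m^\star$ weakly in $\Obs$, and weak lower semicontinuity of the Hilbert norm yields $\frac12\normObs{\Phi m^\star-\obsw}^2\leq\liminf_j\frac12\normObs{\Phi \iter{m}_{k_j}-\obsw}^2$.

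Combining the two steps along the convergent subsequence gives
\eq{
  \fobj(m^\star) \;\leq\; \liminf_{j\to\infty}\fobj(\iter{m}_{k_j}) \;=\; \inf_{\radon}\fobj \;\leq\; \fobj(m^\star),
}
so that $\fobj(m^\star)=\inf_{\radon}\fobj$ and $m^\star$ solves~\eqref{sec:sfw-def:blasso}.

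The step I expect to be the main obstacle is the data-term lower semicontinuity, \ie justifying that $\Phi$ maps weak-* convergent sequences in $\radon$ to weakly convergent sequences in $\Obs$; this is exactly where the admissible-kernel hypotheses (continuity, boundedness, and the vanishing-at-infinity of $\pos\mapsto\dotObs{\phi(\pos)}{p}$) are genuinely needed. A secondary point requiring care is the metrizability of the weak-* topology on the ball, so that the sequential accumulation point extracted above is a legitimate accumulation point for the weak-* topology.
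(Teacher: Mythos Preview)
Your proof is correct and follows essentially the same route as the paper: invoke Remark~\ref{sec:sfw-rem:links} and Lemma~\ref{sec:sfw-lem:fw-blasso-rate} to obtain a bounded minimizing sequence, extract a weak-* accumulation point by Banach--Alaoglu, and conclude by weak-* lower semicontinuity of $\fobj$. The paper is terser (it simply asserts that $\fobj$, being convex and l.s.c., is weak-* l.s.c.), whereas you spell out the l.s.c.\ of each term and the metrizability needed for sequential compactness---these are welcome clarifications but not a different argument.
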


\begin{proof}
By Remark~\ref{sec:sfw-rem:links}, we know that $(\iter{m})_{k\in\NN}$ is a sequence obtained by applying Algorithm~\ref{sec:sfw-alg:fw} to $\blassoeq$ where the final update is step~\ref{computeNextAmp} and~\ref{BFGS} of the \adcgshort. As a result, using Lemma~\ref{sec:sfw-lem:fw-blasso-rate}, one gets that for any $\mlimit$ solution of $\blasso$,
\eq{%
	\forall k\in\NN, \quad \fobj(\iter{m})-\fobj(\mlimit)\leq\frac{C_1}{k}.
}
Hence $(\iter{m})$ is a bounded minimizing sequence. One can extract from it a subsequence that converges towards some $m\in \radon$ (with $\normTVX{m}\leq M$) for the weak-* topology. Since $\fobj$ is convex and l.s.c., it is also weak-* l.s.c.\ so that one obtains:
\eq{%
	\fobj(m)=\fobj(\mlimit).
}
Hence $m$ is a solution of $\blasso$.
\end{proof}

From this Proposition, one easily deduces the following Corollary.
\begin{cor}\label{sec:sfw-cor:weakstarcv}
 If $\mlimit\in\radon$ is the unique solution of $\blasso$ then $(\iter{m})_{k\in\NN}$ weak-* converges towards $\mlimit$.
\end{cor}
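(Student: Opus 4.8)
The plan is to upgrade the subsequential convergence provided by Proposition~\ref{sec:sfw-prop:cvfaible} into convergence of the \emph{entire} sequence, by combining compactness with the assumed uniqueness of the minimizer. The mechanism is the elementary topological fact that, in a compact metric space, a sequence whose set of accumulation points reduces to a single element must converge to that element.

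First I would record that the iterates stay in a fixed ball: as already observed in the proof of Proposition~\ref{sec:sfw-prop:cvfaible}, every $\iter{m}$ satisfies $\normTVX{\iter{m}}\leq M$, so the whole sequence lies in $B_M \eqdef \enscond{m\in\radon}{\normTVX{m}\leq M}$. Since $\radon$ is the topological dual of the separable space $\ContX(\Pos,\RR)$, the Banach-Alaoglu theorem ensures that $B_M$ is compact for the weak-* topology, and separability of the predual further ensures that this topology is \emph{metrizable} on $B_M$; fix a compatible metric $d$. This metrizability is the key enabling ingredient, as it lets me reason with sequences rather than with nets.

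Next I would identify the accumulation points and conclude. Proposition~\ref{sec:sfw-prop:cvfaible} shows that any weak-* accumulation point of $(\iter{m})$ is a solution of $\blasso$; since by hypothesis $\mlimit$ is the \emph{unique} solution, $\mlimit$ is the only possible accumulation point. Arguing by contradiction, suppose $(\iter{m})$ does not converge weak-* to $\mlimit$. Then there exist $\varepsilon>0$ and an infinite set $K\subset\NN$ such that $d(\iter{m},\mlimit)\geq\varepsilon$ for all $k\in K$. By compactness of $B_M$, the subsequence $(\iter{m})_{k\in K}$ admits a weak-* convergent further subsequence with limit $\tilde m\in B_M$, which then satisfies $d(\tilde m,\mlimit)\geq\varepsilon$, hence $\tilde m\neq\mlimit$. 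But $\tilde m$ is a weak-* accumulation point of $(\iter{m})$, so by the previous step $\tilde m=\mlimit$, a contradiction. Therefore $(\iter{m})$ converges weak-* to $\mlimit$.

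I do not expect any genuine obstacle here, since the statement is a soft consequence of the Proposition. The only points requiring care are invoking metrizability of the weak-* topology on the bounded set $B_M$ (which relies on separability of $\ContX(\Pos,\RR)$, itself guaranteed by $\Pos$ being a subset of $\RR^d$ or $\TT^d$), and checking that the extracted limit $\tilde m$ genuinely qualifies as an accumulation point of the full sequence, so that Proposition~\ref{sec:sfw-prop:cvfaible} applies to it.
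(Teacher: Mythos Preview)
Your argument is correct and is precisely the standard compactness-plus-uniqueness mechanism the paper has in mind when it says the corollary is ``easily deduced'' from Proposition~\ref{sec:sfw-prop:cvfaible} (the paper gives no further proof). The only cosmetic point is that the \emph{statement} of Proposition~\ref{sec:sfw-prop:cvfaible} asserts the existence of one accumulation point which is a solution, whereas you need that \emph{every} accumulation point is a solution; this follows immediately from its proof (the whole sequence is minimizing and $\fobj$ is weak-* l.s.c.), so you may want to cite the proof rather than the statement at that step.
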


In fact, under mild assumptions, our algorithm even converges towards the solution of the BLASSO in a finite number of iterations, thanks to the displacement of the spikes over the continuous domain $\Pos$. For the sake of clarity, we state and prove this Theorem in the case of $d=1$ but the changes for $d\in\NN^*$ can be easily done.

\begin{thm}\label{sec:sfw-thm:cvksteps}
 Suppose that $\phi\in\kernel{2}$, that $\meas=\sum_{i=1}^N\amp_i\dirac{\pos_i}$ is the unique solution of $\blasso$, and that $\etaLL=\frac{1}{\la}\Phi^*(y-\Phi \meas)$ is nondegenerate, \ie
	\begin{align}\label{sec:sfw-eq:etaLnondegen}
		\forall \pos\in\Pos\setminus\bigcup_{i=1}^N\{\pos_i\}, \quad |\etaLL(\pos)|<1 \qandq \forall i\in\{1,\ldots,N\}, \quad \etaLL''(\pos_i)\neq 0.
	\end{align}
	Then Algorithm~\ref{sec:sfw-alg:sfw} recovers $\meas$ after a finite number of steps (\ie there exists $k\in\NN$ such that $\iter{m}=\meas$).
\end{thm}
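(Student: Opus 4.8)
The plan is to leverage the weak-* convergence $\iter{m}\stackrel{*}{\rightharpoonup}\meas$ granted by Corollary~\ref{sec:sfw-cor:weakstarcv} (valid since $\meas$ is the unique solution) to show that the certificates $\iter{\eta}=\frac{1}{\la}\Phi^*(y-\Phi\iter{m})$ converge to $\etaLL$ strongly enough to locate all the spikes, and then to use the nondegeneracy of $\etaLL$ to force the non-convex step to land \emph{exactly} on $\meas$. The argument is by contradiction: if the algorithm never stops, a new spike is added at every iteration, and I will show that for $k$ large the joint optimization nevertheless reconstructs $\meas$, which triggers the stopping criterion.

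First I would prove that $\iter{\eta}\to\etaLL$ uniformly on $\Pos$ together with its first two derivatives. Writing $\iter{\eta}(\pos)=\frac{1}{\la}\dotObs{\phi(\pos)}{y}-\frac{1}{\la}\int_\Pos\Co(\pos,\pos')\d\iter{m}(\pos')$ with $\Co(\pos,\pos')=\dotObs{\phi(\pos)}{\phi(\pos')}$, each spatial derivative $\partial_1^i\Co(\cdot,\pos')=\dotObs{\phi^{(i)}(\cdot)}{\phi(\pos')}$ (for $i\le 2$) belongs to $\ContX$ for fixed $\pos'$, so weak-* convergence yields pointwise convergence of $\iter{\eta}^{(i)}$; since $\phi\in\kernel{2}$ and the masses $\normTVX{\iter{m}}\le M$ are uniformly bounded, the families $\iter{\eta}^{(i)}$ are equicontinuous and bounded on the compact $\Pos$, so Arzel\`a--Ascoli upgrades pointwise to uniform convergence.

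Next comes the localization step. By nondegeneracy, $|\etaLL|$ attains the value $1$ exactly at the interior points $\pos_1,\ldots,\pos_N$, with $\etaLL'(\pos_i)=0$ and $\etaLL''(\pos_i)\neq 0$. I would fix pairwise disjoint neighbourhoods $V_i\subset\Poso$ of the $\pos_i$ on which $\etaLL''$ keeps a constant nonzero sign, and some $\rho<1$ with $|\etaLL|\le\rho$ on $\Pos\setminus\bigcup_iV_i$. The uniform convergence of $\iter{\eta},\iter{\eta}',\iter{\eta}''$ then gives, for $k$ large: (i) the maximiser $\iter{\pos_*}$ lies in $\bigcup_iV_i$; (ii) every current spike $\iter{\pos_i}$ lies in $\bigcup_jV_j$, since the optimality of the previous Step~\ref{BFGS} forces $|\iter{\eta}(\iter{\pos_i})|=1$, incompatible with the bound $|\iter{\eta}|\le\rho+o(1)$ outside $\bigcup_jV_j$; (iii) $\iter{\eta}''$ keeps a constant sign on each $V_j$, so $\iter{\eta}$ has at most one critical point there, hence at most one active spike per $V_j$. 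Testing $\iter{m}$ against bumps supported in each $V_j$ and using weak-* convergence shows conversely that each $V_j$ carries at least one spike for $k$ large. Thus for $k\ge k_0$ the iterate has exactly one spike per $V_j$, so $\iter{N}=N$, with $\iter{\pos_j}\to\pos_j$ and $\iter{\amp_j}\to\amp_j$.

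Finally, the crux. At such an iteration the algorithm adds $\iter{\pos_*}\in V_{j_0}$, producing $N+1$ spikes with two in $V_{j_0}$, and runs the joint (amplitude and position) descent on $F(\amp,\pos)=\frac{1}{2}\normObs{\Phi_\pos\amp-y}^2+\la\normu{\amp}$ from an initialisation that, as $k\to\infty$, converges to $\meas$ (with one duplicated location). I would show that the only stationary point of $F$ near this configuration, after merging coincident spikes and discarding zero amplitudes, is $\meas$: any nearby stationary point has certificate uniformly $\Contk{2}$-close to $\etaLL$, so the one-critical-point-per-$V_j$ property reduces it to at most one spike per $V_j$, and the nondegeneracy of $\etaLL$ --- which renders the Hessian of $F$ at $(\amp,\pos)$ invertible, exactly as in the analysis underlying Theorem~\ref{sec:blasso-thm:supportrecovery} --- makes $(\amp,\pos)$ the unique such stationary point in a neighbourhood and a strict local minimum. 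Since the initialisation is arbitrarily close to $\meas$, the descent of Step~\ref{BFGS} lands on $\meas$, and after pruning $\iterpo{m}=\meas$; then $\iterpo{\eta}=\etaLL$, $\normLi{\iterpo{\eta}}=1$, and the stopping criterion fires. The main obstacle is precisely this last paragraph: controlling the non-convex step, i.e. proving that the extra/duplicated spike is collapsed and that the joint descent cannot stall at a spurious stationary point. This is where optimising over positions \emph{and} amplitudes simultaneously (as stressed in Remark~\ref{sec:sfw-rem:links}) is indispensable, and where the invertibility of the Hessian granted by nondegeneracy does the essential work.
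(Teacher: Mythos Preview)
Your first three paragraphs---weak-* convergence, the upgrade to uniform $\Cder{2}$ convergence of $\iter{\eta}$ to $\etaLL$ via equicontinuity and Ascoli, and the localization showing that for large $k$ the iterate $\iter{m}$ carries exactly one spike in each neighbourhood $V_j$ and none outside---are correct and match the paper's argument almost line for line. The problem is that you stop one sentence short of the conclusion and then take a long, unnecessary detour.

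Here is the step you are missing. After your localization you already know that, for $k$ large, (a) $\iter{\eta}(\iter{\pos_j})=\sign(\iter{\amp_j})$ and $\iter{\eta}{}'(\iter{\pos_j})=0$ from the stationarity conditions of the previous Step~\ref{BFGS}, and (b) $|\iter{\eta}(\pos)|<1$ for every $\pos\in\Pos\setminus\{\iter{\pos_1},\ldots,\iter{\pos_N}\}$ (this follows from your (ii) outside $\bigcup_jV_j$ and from your (iii) inside each $V_j$, since strict concavity/convexity around the unique critical point forces $|\iter{\eta}|<1$ away from it). Conditions (a) and (b) together say precisely that $\iter{\eta}\in\partial\normTVX{\iter{m}}$ with $\la\iter{\eta}=\Phi^*(y-\Phi\iter{m})$: the iterate $\iter{m}$ itself satisfies the optimality condition of $\blasso$. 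Hence $\iter{m}$ is a solution, and by uniqueness $\iter{m}=\meas$. The stopping criterion at iteration $k$ then fires because $\iter{\eta}=\etaLL$ and $\normLi{\etaLL}=1$. That is the paper's proof, and it ends right there.

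Your ``crux'' paragraph is therefore superfluous, and it also carries a genuine gap. You assert that because the initialization of Step~\ref{BFGS} is close to a degenerate $(N{+}1)$-spike parametrization of $\meas$, the descent must land on $\meas$. But the algorithm only promises that Step~\ref{BFGS} reaches \emph{some} stationary point; turning ``close initialization'' into ``reaches this specific minimizer'' would require a quantitative basin-of-attraction argument for a non-smooth, non-convex objective at a degenerate parametrization (two coincident positions or a zero amplitude), which is exactly where your appeal to Hessian invertibility breaks down. None of this is needed once you observe that the stationarity conditions of Step~\ref{BFGS} alone already certify optimality of $\iter{m}$.
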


\begin{proof}
Since $\meas$ is the unique solution of $\blasso$, one knows by Corollary~\ref{sec:sfw-cor:weakstarcv} that the sequence $(\iter{m})_{k\in\NN}$ produced by Algorithm~\ref{sec:sfw-alg:sfw} converges for the weak-* topology towards $\meas$.

As $\Phi$ is weak-* to weak continuous and by defining $\iter{p}\eqdef\frac{1}{\la}(y-\Phi \iter{m})$, one gets that $(\iter{p})_{k\in\NN}$ converges towards $\pLL$  in the weak topology of $\Obs$ and that $\iter{\eta}\eqdef\Phi^*\iter{p}$ converges pointwise towards $\etaLL$. Then one can show that $\Phi^*$ is a compact operator. Indeed, for any bounded subset $A\in\Obs$,  one can check easily that $\Phi^*A$ is equicontinuous and pointwise relatively compact so that by Ascoli theorem $\Phi^*A$ is relatively compact for the strong topology of $\ContX(\Pos,\RR)$. As a result one can extract a subsequence of $(\iter{\eta})_{k\in\NN}$ that converges towards $\etaLL$ in uniform norm. $\etaLL$ is then the unique accumulation point in the uniform norm of the bounded sequence $(\iter{\eta})_{k\in\NN}$ hence its convergence towards $\etaLL$ in uniform norm. One can repeat this argument for $(\iter{\eta}{}')_{k\in\NN}$ and $(\iter{\eta}{}'')_{k\in\NN}$ (since $\phi\in\kernel{2}$), obtaining for all $j\in\{0,1,2\}$
\begin{align}\label{sec:sfw-eq:cvetak}
	(\iter{\eta})^{(j)}\overset{\normLi{\cdot}}{\underset{k\to+\infty}{\longrightarrow}}\etaLL^{(j)}.
\end{align}

Because $\etaLL$ is nondegenerate, there exists a small neighborhood around each $\pos_i$ on which  $\etaLL''\neq0$. Hence, we deduce from Equation~\eqref{sec:sfw-eq:cvetak} that there exist $\epsilon>0$ and $k_1\in\NN$ such that:
\eq{
	\forall k\geq k_1,\forall i\in\{1,\ldots,N\},\forall\pos\in]\pos_i-\epsilon,\pos_i+\epsilon[, \quad \iter{\eta}{}''(\pos)\neq 0.
}
We denote in the following
\eq{
	I_{\pos_i,\varepsilon}\eqdef]\pos_i-\epsilon,\pos_i+\epsilon[, \quad \forall i\in\{1,\ldots,N\}.
}

Since $\iter{m}$ converges towards $\meas$ in the weak-* topology and $\abs{\meas}$ does not charge the boundary of $I_{\pos_i,\varepsilon}$, we have
\eq{\forall i\in\{1,\ldots,N\},\quad \iter{m}(I_{\pos_i,\varepsilon})\to \meas(I_{\pos_i,\varepsilon})=\amp_i\neq 0,}
so that there exists $k_2\in\NN$ such that for all $k\geq k_2$, $\iter{m}$ has at least one spike in each $I_{\pos_i,\varepsilon}$. In particular $\iter{m}$ has at least $N$ spikes.

Again, from Equation~\eqref{sec:sfw-eq:cvetak}, since $(\iter{\eta})_{k\in\NN}$ converges uniformly towards $\etaLL$, one deduces that there exists $k_3\in\NN$ such that for all $k\geq k_3$:
\eq{
	\sat{\iter{\eta}}\subset \pa{\sat{\etaLL}}\oplus\pa{]-\epsilon,\epsilon[\times\{0\}},
}
where the set of saturation points of a given $\eta\in\ContX(\Pos,\RR)$ is defined as:
\begin{align*}
	\sat{\eta}\eqdef\left\{(\pos,v)\in\Pos\times\{-1,1\}; \ \eta(\pos)=v\right\}.
\end{align*}
Moreover,
\eq{
	\forall \pos\in\Pos\setminus\bigcup_{i=1}^N I_{\pos_i,\varepsilon}, \quad |\iter{\eta}(\pos)|<1.
}
In particular for $k\geq k_3$, $\iter{m}$ has no spikes in $\Pos\setminus\bigcup_{i=1}^N I_{\pos_i,\varepsilon}$ because it would contradict the optimality conditions of Step~\ref{BFGS} of Algorithm~\ref{sec:sfw-alg:sfw}: for all $i\in\{1,\ldots,\iter{N}\}$, $\iter{\eta}(\iter{\pos_i})=\sign(\iter{\amp_i})$.

Suppose now that $k\geq \max(k_1,k_2,k_3)$. Then $\iter{m}$ has at least one spike in each neighborhood of $\pos_i$ and no spikes outside. Moreover $|\iter{\eta}|<1$ outside the neighborhoods and $\iter{\eta}{}''\neq 0$ inside. Let $i\in\{1,\ldots,N\}$ and denote $\iter{\pos_j}\in I_{\pos_i,\varepsilon}$ a position of a spike of $\iter{m}$. From the optimality conditions of Step~\ref{BFGS}, one has also that $\iter{\eta}{}'(\iter{\pos_j})=0$. This combined with  $\iter{\eta}{}''\neq 0$ in $I_{\pos_i,\varepsilon}$ implies that $|\iter{\eta}|<1$, except at $\iter{\pos_j}$. Hence,  $\iter{m}$ has exactly one spike in this neighborhood. As a consequence, we proved that $\iter{m}$ has exactly $N$ spikes (one inside each neighborhood) and:
\eq{
	\forall \pos\in\Pos\setminus\bigcup_{i=1}^N\{\iter{\pos_i}\}, \quad |\iter{\eta}(\pos)|<1.
}
Hence $\iter{m}$, composed of $N$ spikes, is a solution of $\blasso$. Since $\meas$ is supposed to be the unique solution of $\blasso$, one concludes that:
\eq{
	\iter{m}=\meas,
}
\ie the algorithm recovers $\meas$ in a finite number of iterations.
\end{proof}

Note that one proved the convergence in a \emph{finite} number of iterations but not exactly $N$ iterations if $\meas$ is composed of $N$ spikes. However in practice this is exactly what we observe.

\subsection{Illustration of the $N$-Steps Convergence of the \adcgshort}\label{sec:sfw-subsec:algonumexp}

We now illustrate how the algorithm works and we shows that it converges in exactly $N$ iterations in practice (when the noise level and the regularization parameter are appropriate, \ie $\max(\la,\normObs{w}/\la)$ is low enough).

We consider $\Pos=[0,1]$ and a convolution operator with a sampled Gaussian kernel for $\Phi$
\eq{
  \Phi: m\in\Mm(\Pos)\mapsto \int_{[0,1]} \phi\d m\in\RR^K \qwhereq \phi(\pos)=\pa{\frac{1}{\sqrt{2\pi\sigma^2}}e^{-\frac{(\frac{i-1}{K-1}-\pos)^2}{2\sigma^2}}}_{1\leq i\leq K}.
}
We set $\sigma=0.05$ and $K=100$. The initial measure used is $\measO=1.3\dirac{0.3}+0.8\dirac{0.37}+1.4\dirac{0.7}$ and the noise is small ($w=10^{-4}w_0$ where $w_0=\mbox{randn}(K)$).

\begin{figure}[!htb]
\centering
\includegraphics[width=.5\linewidth]{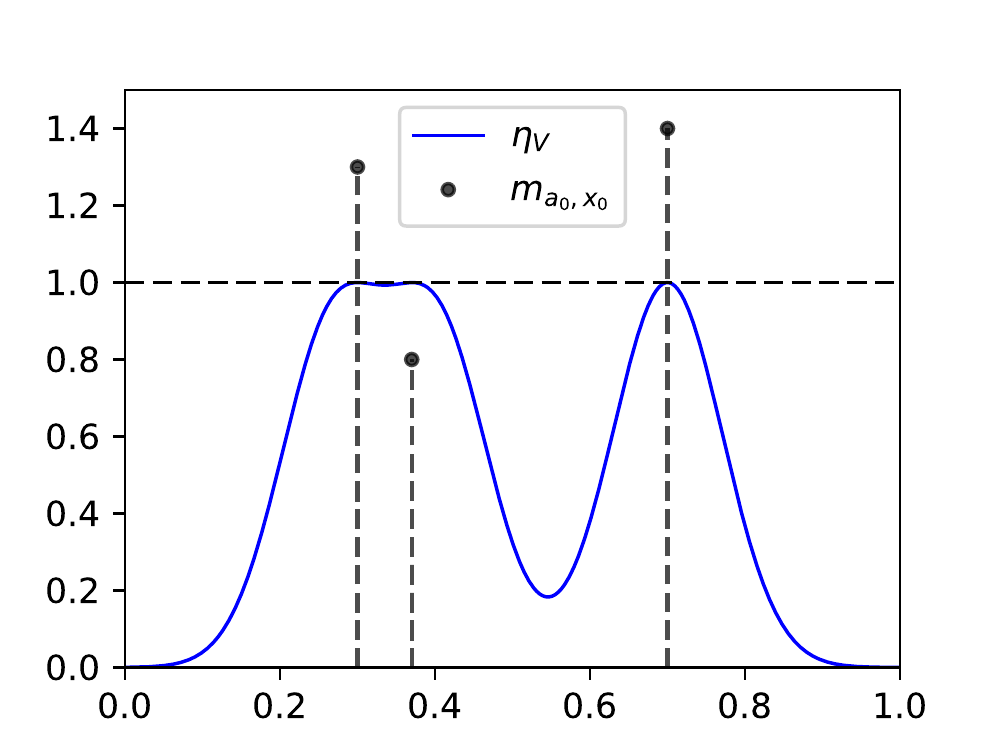}
\vspace{-0.3cm}
\caption{$\etaVV$ for $\measO=1.3\dirac{0.3}+0.8\dirac{0.37}+1.4\dirac{0.7}$.}\label{sec:sfw-fig:Nstep-etaV}
\end{figure}

Figure~\ref{sec:sfw-fig:Nstep-etaV} shows $\etaVV$ for this configuration. One can see that it is nondegenerate. Hence, in a small noise now regime, with the appropriate choice of $\lambda$, there is a unique measure solution of $\blassoplus$ which is composed of the same number of spikes as $\measO$. Moreover,  by Theorem~\ref{sec:sfw-thm:cvksteps}, the \adcgshort~algorithm recovers it in a finite number of iterations.

\begin{figure}[!htb]
\centering
\includegraphics[width=.5\linewidth]{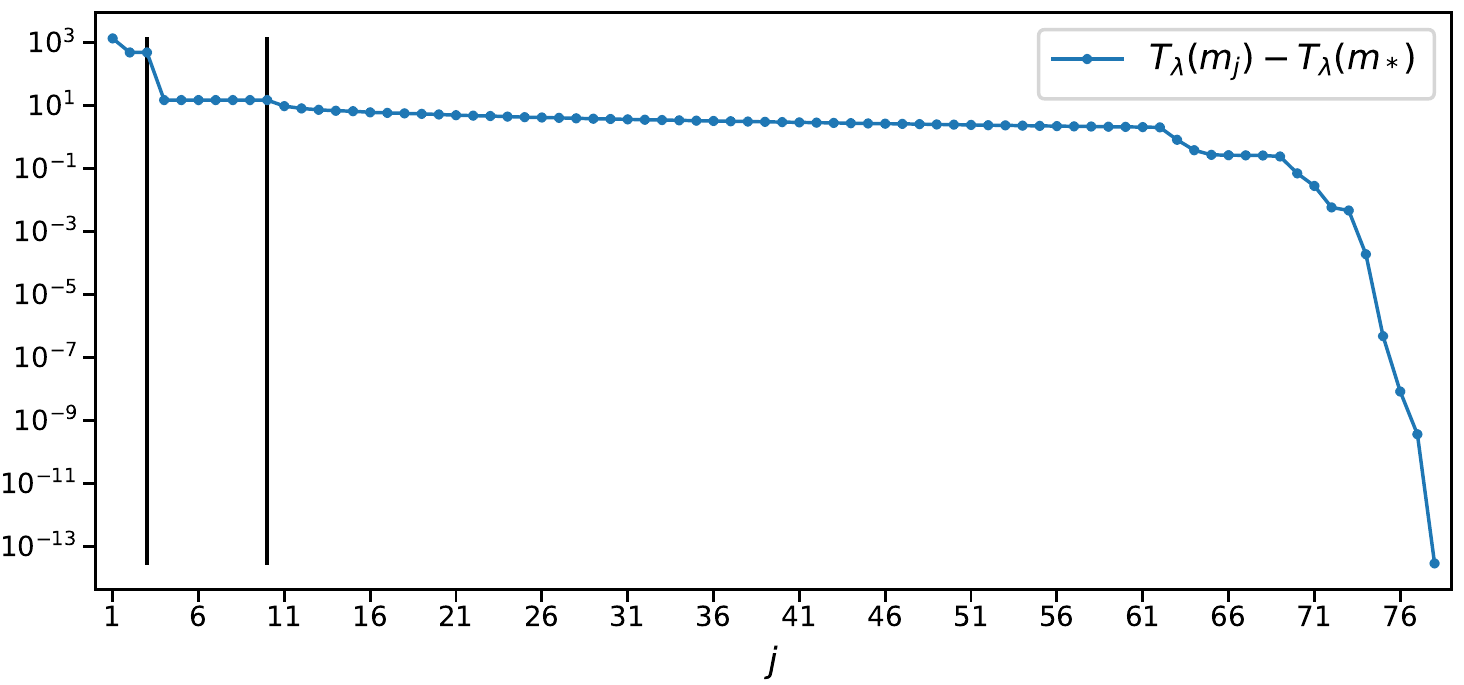}
\vspace{-0.3cm}
\caption{Values of the objective function throughout the \adcgshort~algorithm (cumulative iterations of the BFGS). The vertical black lines separate the main outer iterations of the algorithm.}\label{sec:sfw-fig:Nstep-fobj}
\end{figure}
The decrease of the objective function throughout the algorithm iterations  (cumulative iterations of  BFGS) is presented  in Figure~\ref{sec:sfw-fig:Nstep-fobj}.
As indicated by the two vertical black lines, which show the intermediate iterations, the algorithm converges in exactly $3$ iterations. One can observe an important decrease of the objective function  each time a spike is added.  Also, it is noteworthy that BFGS converges with very few iterations when $k=0$ and $k=1$ (first two spikes added) and that the main computational load for the non-convex step occurs for $k=2$ (more iterations of  BFGS).

Figure~\ref{sec:sfw-fig:Nstep-cv} shows $\iter{m}$ and $\iter{\eta}$ at different times of the algorithm. More precisely, for $k\in\{0,1,2\}$,
we display  the initial measure $\measO$, the recovered measure, and the associated $\eta$. Moreover, we present them after the LASSO step (\ie $\iterph{m}$ and $\iterph{\eta}$) as well as after the BFGS step (\ie $\iterpo{m}$ and $\iterpo{\eta}$) .

One remarks, as expected, that for all $i$, $\iterph{\eta}(\pos_i)=1$, $\iterpo{\eta}(\pos_i)=1$ and $\iterpo{\eta}{}'(\pos_i)=0$. In the first two main iterations, the spikes are almost not moved by the BFGS. However, at the last iteration, the displacement of the positions and amplitudes of the spikes  is crucial to obtain $\iterpo{\eta}\in\partial \normTVX{\iterpo{m}}$, and thus recover the solution of $\blassoplus$ in three steps.

\begin{figure}
\centering
\begin{tikzpicture}
\node at (0,0) {\includegraphics[width=.4\linewidth]{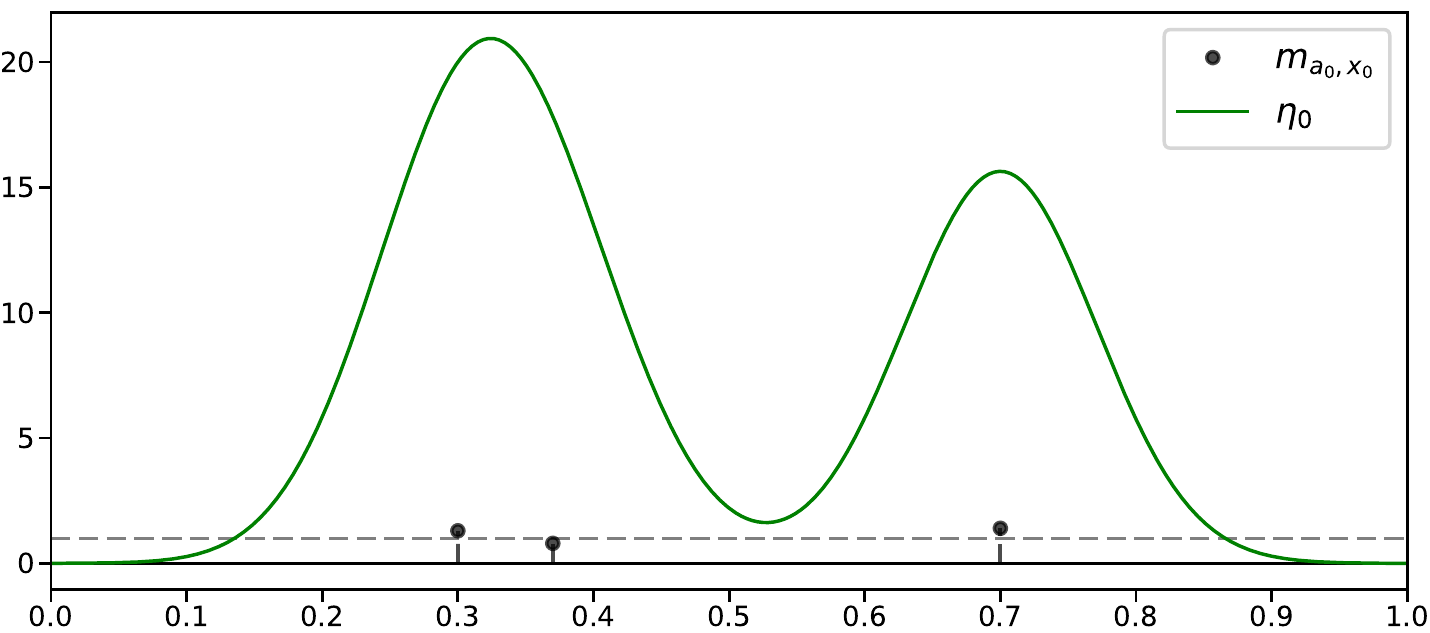}};
\node[fill=white] at (0,-1.8) {$k=0$. Start of the loop.};
\node at (7,0) {\includegraphics[width=.4\linewidth]{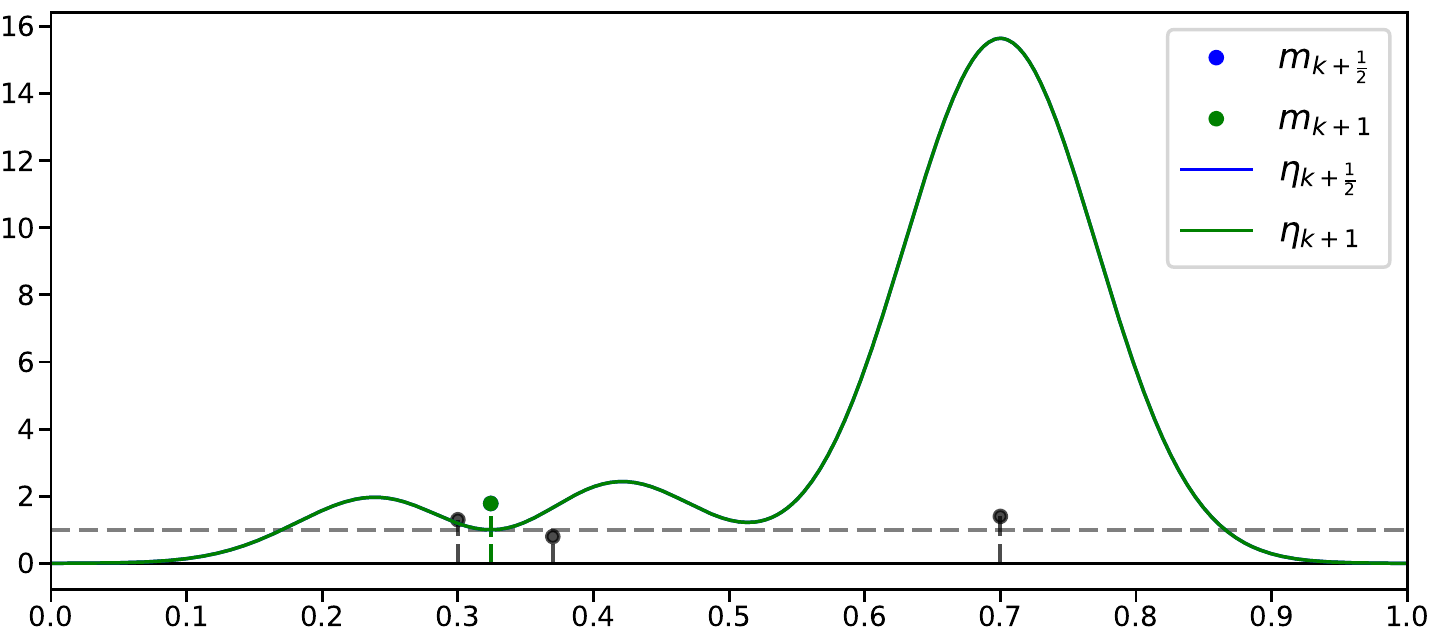}};
\node[fill=white] at (7,-1.8) {$k=0$. End of the loop.};
\node at (0,-3.7) {\includegraphics[width=.4\linewidth]{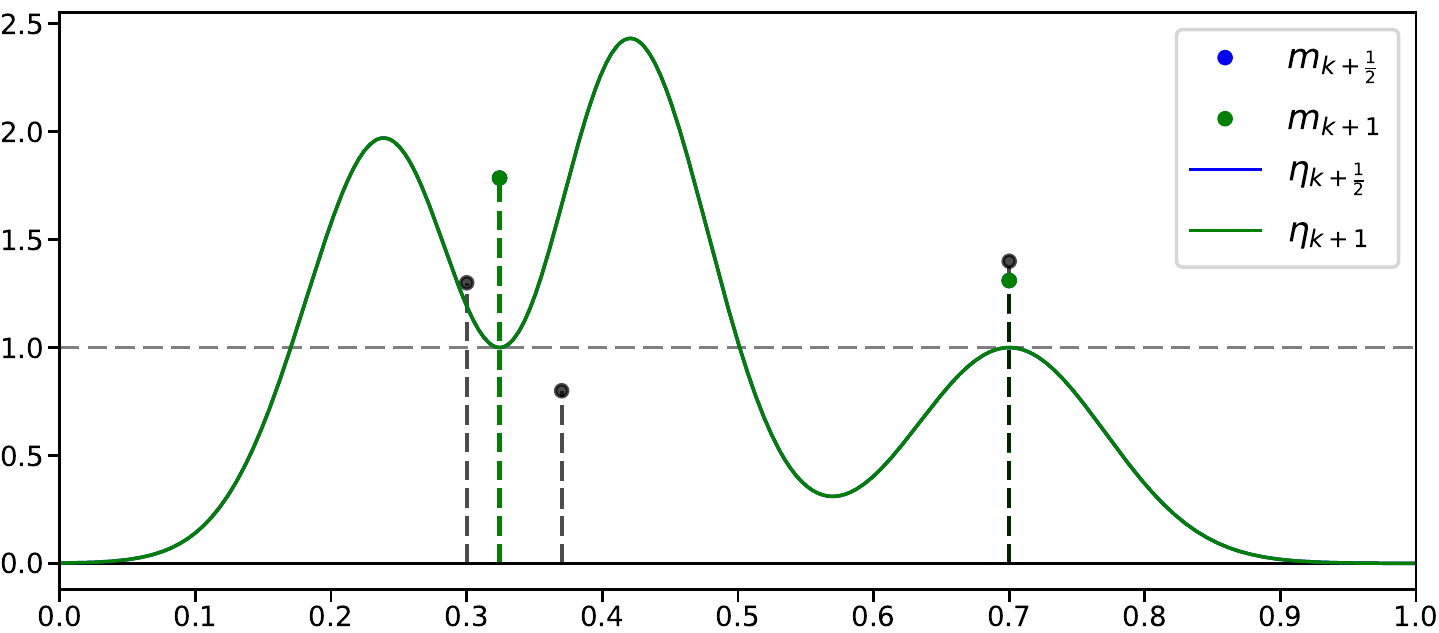}};
\node[fill=white] at (0,-5.5) {$k=1$. End of the loop.};
\node at (7,-3.8) {\includegraphics[width=.4\linewidth]{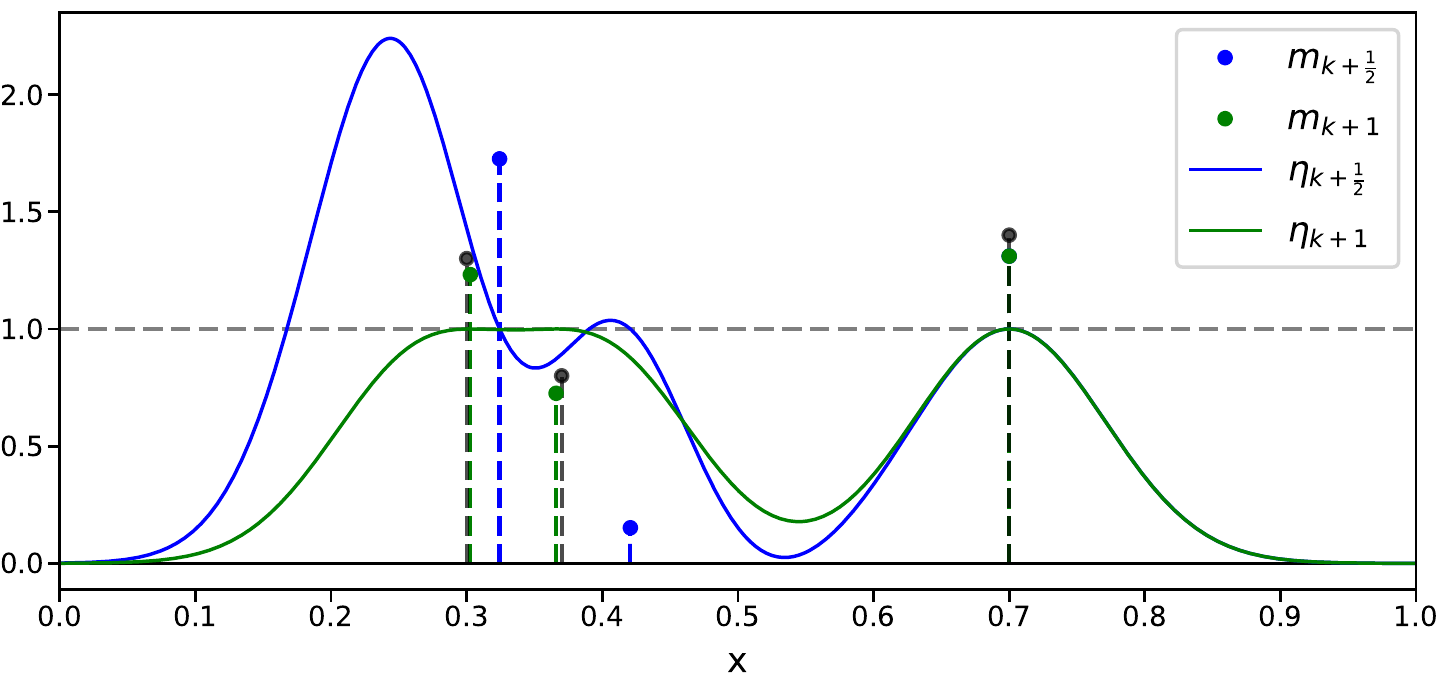}};
\node[fill=white] at (7,-5.5) {$k=2$. End of the loop.};
\end{tikzpicture}
\caption{Main steps of the \adcgshort~algorithm.}\label{sec:sfw-fig:Nstep-cv}
\end{figure}



%


\section{Single Molecule Localization Microscopy}\label{sec:microscopy}

The field of fluorescent microscopy has experienced an important revolution during the past two decades with the emergence of  super-resolution techniques. These modalities, such as structured illumination microscopy (SIM) \cite{gustafsson-surpassing2000}, stimulated emission depletion (STED) \cite{Hell:94}, or single molecule localization microscopy (SMLM)---which includes photoactivated localization microscopy (PALM)  \cite{Betzig1642,hess-ultra2007} and stochastic optical reconstruction microscopy (STORM) \cite{rust2006sub}---bypass the diffraction limit so as to reach unprecedented  nanoscale resolution.  The main principle behind these methods relies on a combined use of optics and numerical processing, which is commonly  called computational imaging. The resolution improvement is thus directly related to the performance of the reconstruction algorithms employed to process the acquired data. 

SMLM techniques use photoactivables fluorescent probes to sequentially image a subset of activated molecules. Then, dedicated algorithms are deployed to  precisely extract the position of these molecules. While the difficulty of the localization problem increases with the density of activated molecules per acquisitions, low density activations drastically reduce the temporal resolution of the system which makes the method limited for live imaging. Hence, current trends in SMLM concern the development of efficient algorithms dealing with high density data for which classical point-spread function (PSF) fitting or centroid localization methods \cite{henriques2010quickpalm} fail. In particular, off-the-grid sparse regularized methods have shown their efficiency for high density settings \cite{huang2017super,boyd-adcg2015}.   For a  complete review and comparisons of existing methods, we refer the reader to the two recent SMLM challenges \cite{sage-quantitative2015,Sage362517}.

 Initially introduced for two-dimensional imaging, SMLM has been extended to 3D thanks to PSF engineering.  The principle relies on the design of PSFs which vary in the axial direction  (\ie  z) in order to encode an information about the depth of molecules. Conventional PSF models include  astigmatism  \cite{huang-three2008} and  double-helix  \cite{rama-three2009}. An alternative to PSF engineering is to record  simultaneously multiple focal planes, as in the biplane modality  \cite{juette-three2008}. It is noteworthy that these two approaches can also be combined  as in \cite{huang-3dastmultifp} where the authors use both an astigmatism PSF and multi-focal acquisitions.  \\
 
 In this section, we study the performance of the SFW algorithm on both astigmatism and double-helix modalities with various number of focal planes (typically from $1$ to $4$). We emphasize that conventional astigmatism and double-helix SMLM devices---in particular commercial ones---use a single focal plane. As opposed to single-focal acquisitions, multi-focal acquisitions require to mount and synchronize several cameras in parallel. To the best of our knowledge, such a setting has only been reported by Huang et al~\cite{huang-3dastmultifp} for the astigmatism SMLM. 
Moreover, we propose to compare these two modalities to an alternative approach where depth information is extracted from multi-angle total internal reflection fluorescence (MA-TIRF) microscopy acquisitions. Such an approach has never been reported yet and we expect our numerical simulations to serve as a proof of concept for further developments. One of the main interest in combining SMLM with MA-TIRF is that classical PSFs, which are better localized laterally than astigmatism or double-helix, can be used. This would reduce the difficulty of lateral molecule localization for high density settings while recovering the depth  through the MA-TIRF acquisitions. 

\subsection{Forward Operators} \label{sec:FwdModels}
 
In this section, we define the forward operator $\Phi$ for the three modalities considered in this paper. The first two correspond to conventional three-dimensional SMLM with astigmatism or double-helix PSFs. The third one, on the contrary, uses a MA-TIRF excitation in order to get an information about the depth of molecules. The operator  $\Phi : \radon \rightarrow \RR^{N_1N_2K}$ maps the  Radon measures $m\in\radon$ to the discrete noiseless measurements $\Phi m \in \RR^{N_1N_2K}$,
\begin{equation}\label{eq:Fwd}
  \Phi m=\int_X \phi(x)\d m(x).
\end{equation}
It is fully characterized by the function $\phi : X \rightarrow \RR^{N_1N_2K}$. Hence, for each modality, we only have to define $\phi$. In the following, $X \eqdef [0,b_1] \times [0,b_2] \times [0,b_3]$ is a subset of $\RR^3$, and we write $x=(x_1,x_2,x_3)\in X$. Then, we consider a camera containing $N_1 \times N_2 $ pixels and we denote the center of the ith pixel by $(c_{i,1},c_{i,2})$.
 Finally,  we provide expressions of $\phi$ which enclose the integration over camera pixels 
$$
\Omega_i \eqdef (c_{i,1},c_{i,2})+\left[-\frac{b_1}{2N_1},\frac{b_1}{2 N_1}\right]\times \left[-\frac{b_2}{2 N_2},\frac{b_2}{2 N_2}\right] \subset \Omega \eqdef [0,b_1] \times [0,b_2].
$$

\paragraph{Astigmatism model.} 

This modality provides depth information using an astigmatism deformation of the PSF with respect to the axial direction $z$. It is customary to model the latter with a Gaussian function whose variances $\sigma_1$  and $\sigma_2 $ vary with $z$ according to \cite{huang2017super,kirshner2013}
\begin{equation}\label{eq:astig_var}
	\sigma_1(z) \eqdef \sigma_0\sqrt{1+\pa{\frac{\alpha z-\beta}{d}}^2} \qandq
  	\sigma_2(z) \eqdef \sigma_1 (-z).
\end{equation} 
The constants involved in~\eqref{eq:astig_var} can be calibrated from real data~\cite{huang-three2008,kirshner2013}.  Then, integrating this Gaussian model over camera pixels,  we have for all $i\in \{1,\ldots, N_1 N_2\}$ and $k \in \{1,\ldots,K\}$
$$
	[\phi(x)]_{i,k} \eqdef\frac{1}{2\pi\sigma_{1}(x_3-z_k)\sigma_{2}(x_3-z_k)}  \int_{\Omega_i} e^{-\left(\frac{(x_1-s_1)^2}{2\sigma^2_{1}(x_3-z_k)}+\frac{(x_2-s_2)^2}{2\sigma^2_{2}(x_3-z_k)}\right)} \d s_1 \d s_2,
$$
where $(z_k)_{k=1}^K$ are the positions of the considered focal planes.



\paragraph{Double-helix model.}
 
 Here, depth information is obtained by using a PSF formed out of two lobes which coil around each other along $z$  to form a double-helix shape. In this paper, we model these lobes by two Gaussian functions with fixed variances $\sigma_1= \sigma_2$, and with a center whose lateral position $(r_1,r_2) $ (respectively, $(-r_1,-r_2)$) varies with $z$ according to
 \begin{equation}
	r_1(z) \eqdef \frac{\om}{2}\cos(\theta(z)) \; \text{ and } \;
	r_2(z) \eqdef -\frac{\om}{2}\sin(\theta(z)) \; \text{ where } \; 
\theta(z)=\theta_{\mathrm{speed}} z \label{sec:microscopy-eq:theta}.
\end{equation}
Parameters $\omega >0$ and $\theta_{\mathrm{speed}}>0$ correspond to the distance between the two Gaussian and the rotation speed of the double-helix (rad/nm), respectively. Then, integrating this model over camera pixels,  we have for all $i\in \{1,\ldots, N_1 N_2\}$ and $k \in \{1,\ldots,K\}$
$$
 	[\phi(x)]_{i,k} \eqdef  \frac{1}{{2\pi}\sigma_{1}\sigma_{2}} \sum_{u\in\{-1,1\}}   \int_{\Omega_i}  e^{-\left( \frac{(x_1+u   r_1(x_3-z_k)-s_1)^2}{2\sigma^2_{1}} +\frac{(x_2+u   r_2(x_3-z_k)-s_2)^2}{2\sigma^2_{2}}\right)  } \d s_1 \d s_2,
$$
where $(z_k)_{k=1}^K$ are the positions of the considered focal planes.

\paragraph{MA-TIRF model.} 

With this modality, each activated set of molecules is imaged using $K\in \NN$ TIRF illuminations with incident angles $(\al_k)_{k =1}^K$.  Let $\nii>0$ and $n_t>0$ be the refractive indices of the incident (\ie glass coverslip) and the transmitted (\ie sample) medium, respectively. A TIRF excitation is obtained when the incident angle $\al$ is  greater than the critical angle $\alc = \arcsin (n_t/\nii)$  for which we have total internal reflection of the light within the incident medium. This phenomenon produces an evanescent wave which decays in the transmitted medium as $ \exp (-s x_3) $, where $s = ({4\pi\nii})/{\lamb}\pa{\sin^2(\al)-\sin^2(\alc)}$ is the penetration depth and $\lamb$ is the wavelength of the incident laser beam~\cite{axelrod1981cell,axelrod-total2008}. Because the decay of this evanescent excitation vary with the incident angle, the depth of biological structures can be recovered with a nanometric precision from multi-angle acquisitions~\cite{boulanger2014fast,dos2016,Zheng2018}. 
Combining  this principle with SMLM techniques lead to a forward model $\Phi$ defined, for all  $i \in  \{1,\ldots , N_1 N_2\}$ and $k \in \{1,\ldots,K\}$, by

 \begin{equation}\label{eq:MATIRF_Fwd}
 	[\phi(x)]_{i,k} \eqdef \frac{\xi(x_3) e^{-s_k x_3}}{{2\pi}\sigma_{1}\sigma_{2}} \int_{\Omega_i}   e^{-\left( \frac{(x_1 - s_1)^2}{2\sigma^2_{1}} +\frac{(x_2 -s_2)^2}{2\sigma^2_{2}}\right)  } \d s_1 \d s_2,  
 \end{equation}
where $ \xi(z)=\pa{\sum_{k=1}^\K e^{-2s_k z}}^{-1/2}.$
This model comes from the combination of a lateral convolution with the axial  TIRF excitation. Here the PSF of the system  is assumed to be a Gaussian with variances $\sigma_1=\sigma_2$, and to be constant along $x_3$ (because only a thin layer of few hundred nanometers is excited by the evanescent wave). The  values  $(s_k)_{k=1}^K$ correspond to the penetration depths associated to the incident angles $(\alpha_k)_{k=1}^K$.

\begin{rem}\label{sec:microscopy-rem:separable-lap}
One particularity of the MA-TIRF modality is that  the kernel $\phi$  in~\eqref{eq:MATIRF_Fwd} is separable. This can be exploited numerically to reduce the overall algorithm complexity.
\end{rem}

\paragraph{Illustrations and numerical computation of $\etaVV$.} 

Examples of noiseless measurements $\obsO=\Phi\measO$  with
\begin{align}\label{sec:microscopy-eq:measO}
	\measO=\dirac{(1.5,2.5,0.1)}+\dirac{(1.5,3,0.5)}+\dirac{(2,5,0.7)}+\dirac{(4.5,3.5,0.4)}+\dirac{(5,1,0.2)}.
\end{align}
are presented in Figure~\ref{fig:NoiselessEx}  for the three modalities. The parameters used for these simulations are provided in Table~\ref{Table:rparametersSImu}. One can observe the effect of the three modalities on molecules at different depths. For the astigmatism modality,  the orientation along which the PSF is defocuced  indicates the position of the molecule with respect to the focal plane (above/below). Moreover, the  larger is this defocucing, the deeper is the molecule. In the case of the double-helix modality, we can clearly see the rotation of the PSF with depth. Finally, for the MA-TIRF modality, we can observe that the recorded intensities for deep molecules decrease, with the incident angle, faster than the intensity for molecules which are close to the glass coverslip (\ie $x_3=0$).

\begin{figure}[t]
		\centering
				\begin{tikzpicture}
		\begin{groupplot}[group style={group size= 4 by 3,                      
    					  horizontal sep=0.3cm, vertical sep=0.3cm},          
						  xmin=0,xmax=6.4,
					   	  ymin=0,ymax=6.4,
						  title style={yshift=-0.10cm},
						  grid=both,
						  xtick={0,2,4,6},
						  xticklabels={0,2,4,6},
						  ytick={0,2,4,6},
						  yticklabels={0,2,4,6},
						  axis equal image,
						  grid style={black},
    					  width=0.35\textwidth]
    	\nextgroupplot[enlargelimits=false,xticklabels={,,},ylabel style={align=center},ylabel={{Astigmatism \\[0.2cm] $x_2$ ($\micron$)}}] 	
    	    \addplot[] graphics[xmin=0,ymin=0,xmax=6.4,ymax=6.4] {microscopy/models/psf_ast_1-up};
    	    \addplot[blue!12.5!red,mark=*,mark size=1] (1.5,2.5); \addplot[blue!62.5!red,mark=*,mark size=1] (1.5,3);
    	    \addplot[blue!87.5!red,mark=*,mark size=1] (2,5); \addplot[blue!50!red,mark=*,mark size=1] (4.5,3.5);
    	    \addplot[blue!25!red,mark=*,mark size=1] (5,1);
    	    \node[white,anchor=west] at (axis cs:0.1,5.9) {{$z_1=0.16\micron$}};
    	\nextgroupplot[enlargelimits=false,yticklabels={,,},xticklabels={,,}] 	
    	    \addplot[] graphics[xmin=0,ymin=0,xmax=6.4,ymax=6.4] {microscopy/models/psf_ast_2-up};
    	    \addplot[blue!12.5!red,mark=*,mark size=1] (1.5,2.5); \addplot[blue!62.5!red,mark=*,mark size=1] (1.5,3);
    	    \addplot[blue!87.5!red,mark=*,mark size=1] (2,5); \addplot[blue!50!red,mark=*,mark size=1] (4.5,3.5);
    	    \addplot[blue!25!red,mark=*,mark size=1] (5,1);
    	    \node[white,anchor=west] at (axis cs:0.1,5.9) {{$z_2=0.32\micron$}};
    	\nextgroupplot[enlargelimits=false,yticklabels={,,},xticklabels={,,}] 			  
    		\addplot[] graphics[xmin=0,ymin=0,xmax=6.4,ymax=6.4] {microscopy/models/psf_ast_3-up};
    	   \addplot[blue!12.5!red,mark=*,mark size=1] (1.5,2.5); \addplot[blue!62.5!red,mark=*,mark size=1] (1.5,3);
    	    \addplot[blue!87.5!red,mark=*,mark size=1] (2,5); \addplot[blue!50!red,mark=*,mark size=1] (4.5,3.5);
    	    \addplot[blue!25!red,mark=*,mark size=1] (5,1);
    		 \node[white,anchor=west] at (axis cs:0.1,5.9) {{$z_3=0.48\micron$}};
    	\nextgroupplot[enlargelimits=false,yticklabels={,,},xticklabels={,,}] 			  
    		\addplot[] graphics[xmin=0,ymin=0,xmax=6.4,ymax=6.4] {microscopy/models/psf_ast_4-up};			  
    		 \addplot[blue!12.5!red,mark=*,mark size=1] (1.5,2.5); \addplot[blue!62.5!red,mark=*,mark size=1] (1.5,3);
    	    \addplot[blue!87.5!red,mark=*,mark size=1] (2,5); \addplot[blue!50!red,mark=*,mark size=1] (4.5,3.5);
    	    \addplot[blue!25!red,mark=*,mark size=1] (5,1);
    	     \node[white,anchor=west] at (axis cs:0.1,5.9) {{$z_4=0.64\micron$}};    		  
    	\nextgroupplot[enlargelimits=false,xticklabels={,,},ylabel style={align=center},ylabel={{Double-helix \\[0.2cm] $x_2$ ($\micron$)}}] 	
    	    \addplot[] graphics[xmin=0,ymin=0,xmax=6.4,ymax=6.4] {microscopy/models/psf_dh_1-up};
    	    \addplot[blue!12.5!red,mark=*,mark size=1] (1.5,2.5); \addplot[blue!62.5!red,mark=*,mark size=1] (1.5,3);
    	    \addplot[blue!87.5!red,mark=*,mark size=1] (2,5); \addplot[blue!50!red,mark=*,mark size=1] (4.5,3.5);
    	    \addplot[blue!25!red,mark=*,mark size=1] (5,1);
    	    \node[white,anchor=west] at (axis cs:0.1,5.9) {{$z_1=0.16\micron$}};
    	\nextgroupplot[enlargelimits=false,yticklabels={,,},xticklabels={,,}] 	
    	    \addplot[] graphics[xmin=0,ymin=0,xmax=6.4,ymax=6.4] {microscopy/models/psf_dh_2-up};
    	    \addplot[blue!12.5!red,mark=*,mark size=1] (1.5,2.5); \addplot[blue!62.5!red,mark=*,mark size=1] (1.5,3);
    	    \addplot[blue!87.5!red,mark=*,mark size=1] (2,5); \addplot[blue!50!red,mark=*,mark size=1] (4.5,3.5);
    	    \addplot[blue!25!red,mark=*,mark size=1] (5,1);
    	     \node[white,anchor=west] at (axis cs:0.1,5.9) {{$z_2=0.32\micron$}};
    	\nextgroupplot[enlargelimits=false,yticklabels={,,},xticklabels={,,}] 			  
    		\addplot[] graphics[xmin=0,ymin=0,xmax=6.4,ymax=6.4] {microscopy/models/psf_dh_3-up};
    	    \addplot[blue!12.5!red,mark=*,mark size=1] (1.5,2.5); \addplot[blue!62.5!red,mark=*,mark size=1] (1.5,3);
    	    \addplot[blue!87.5!red,mark=*,mark size=1] (2,5); \addplot[blue!50!red,mark=*,mark size=1] (4.5,3.5);
    	    \addplot[blue!25!red,mark=*,mark size=1] (5,1);
    		  \node[white,anchor=west] at (axis cs:0.1,5.9) {{$z_3=0.48\micron$}};
    	\nextgroupplot[enlargelimits=false,yticklabels={,,},xticklabels={,,}] 			  
    		\addplot[] graphics[xmin=0,ymin=0,xmax=6.4,ymax=6.4] {microscopy/models/psf_dh_4-up};		
    	    \addplot[blue!12.5!red,mark=*,mark size=1] (1.5,2.5); \addplot[blue!62.5!red,mark=*,mark size=1] (1.5,3);
    	    \addplot[blue!87.5!red,mark=*,mark size=1] (2,5); \addplot[blue!50!red,mark=*,mark size=1] (4.5,3.5);
    	    \addplot[blue!25!red,mark=*,mark size=1] (5,1);	  
    		  \node[white,anchor=west] at (axis cs:0.1,5.9) {{$z_4=0.64\micron$}};   		  
    	\nextgroupplot[enlargelimits=false,ylabel style={align=center},ylabel={{MA-TIRF \\[0.2cm] $x_2$ ($\micron$)}},xlabel=$x_1$ ($\micron$)]     		  
    	    \addplot[] graphics[xmin=0,ymin=0,xmax=6.4,ymax=6.4] {microscopy/models/psf_laplace_1-up};
    	    \addplot[blue!12.5!red,mark=*,mark size=1] (1.5,2.5); \addplot[blue!62.5!red,mark=*,mark size=1] (1.5,3);
    	    \addplot[blue!87.5!red,mark=*,mark size=1] (2,5); \addplot[blue!50!red,mark=*,mark size=1] (4.5,3.5);
    	    \addplot[blue!25!red,mark=*,mark size=1] (5,1);
    	    \node[white,anchor=west] at (axis cs:0.1,5.9) {{$\alpha_1=61.63^o$}};
    	\nextgroupplot[enlargelimits=false,yticklabels={,,},xlabel=$x_1$ ($\micron$)] 	
    	    \addplot[] graphics[xmin=0,ymin=0,xmax=6.4,ymax=6.4] {microscopy/models/psf_laplace_2-up};
    	    \addplot[blue!12.5!red,mark=*,mark size=1] (1.5,2.5); \addplot[blue!62.5!red,mark=*,mark size=1] (1.5,3);
    	    \addplot[blue!87.5!red,mark=*,mark size=1] (2,5); \addplot[blue!50!red,mark=*,mark size=1] (4.5,3.5);
    	    \addplot[blue!25!red,mark=*,mark size=1] (5,1);
    	     \node[white,anchor=west] at (axis cs:0.1,5.9) {{$\alpha_2=67.61^o$}};
    	\nextgroupplot[enlargelimits=false,yticklabels={,,},xlabel=$x_1$ ($\micron$)] 			  
    		  \addplot[] graphics[xmin=0,ymin=0,xmax=6.4,ymax=6.4] {microscopy/models/psf_laplace_3-up};
    		  \addplot[blue!12.5!red,mark=*,mark size=1] (1.5,2.5); \addplot[blue!62.5!red,mark=*,mark size=1] (1.5,3);
    	     \addplot[blue!87.5!red,mark=*,mark size=1] (2,5); \addplot[blue!50!red,mark=*,mark size=1] (4.5,3.5);
    	     \addplot[blue!25!red,mark=*,mark size=1] (5,1);
    		  \node[white,anchor=west] at (axis cs:0.1,5.9) {{$\alpha_3=73.6^o$}};
    	\nextgroupplot[enlargelimits=false,yticklabels={,,},xlabel=$x_1$ ($\micron$)] 			  
    		  \addplot[] graphics[xmin=0,ymin=0,xmax=6.4,ymax=6.4] {microscopy/models/psf_laplace_4-up};
    		  \addplot[blue!12.5!red,mark=*,mark size=1] (1.5,2.5); \addplot[blue!62.5!red,mark=*,mark size=1] (1.5,3);
    	     \addplot[blue!87.5!red,mark=*,mark size=1] (2,5); \addplot[blue!50!red,mark=*,mark size=1] (4.5,3.5);
    	     \addplot[blue!25!red,mark=*,mark size=1] (5,1);
    		  \node[white,anchor=west] at (axis cs:0.1,5.9) {{$\alpha_4=79.58^o$}};
    \end{groupplot}
	\end{tikzpicture}
		\caption{\label{fig:NoiselessEx} Noiseless acquisitions $\obsO$ for the measure $\measO$ given in~\eqref{sec:microscopy-eq:measO} and $K=4$. The parameters used for these simulations  are given in Table~\ref{Table:rparametersSImu}. The color of the molecules represent their depths: 0 (red) -- $0.8\micron$ (blue). }
	\end{figure}
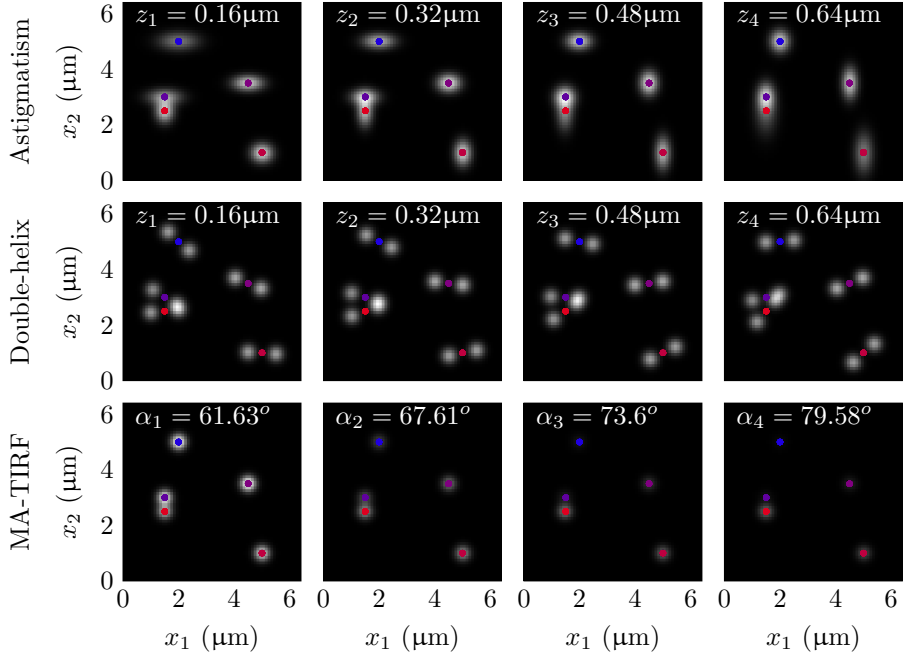
		
	 	
Although, for these three-dimensional models, an explicit expression of $\etaVV$ seems challenging to come by, the latter can be computed numerically for specific points $x \in X$.	A representation of $\etaVV$ for the measure given in~\eqref{sec:microscopy-eq:measO} at $x_3=0.1$ and $x_3=0.5$ is depicted in Figure~\ref{fig:EtaV_microscopy}. For the three modalities, we have that $\etaVV(1.5,2.5,0.1)=\etaVV(1.5,3,0.5)=1$ and otherwise $\etaVV$ is smaller than $1$. Hence, $\etaVV$ seems nondegenerate and a measure composed of the same number of Dirac masses as $\measO$ can be recovered by the SFW algorithm.
	 
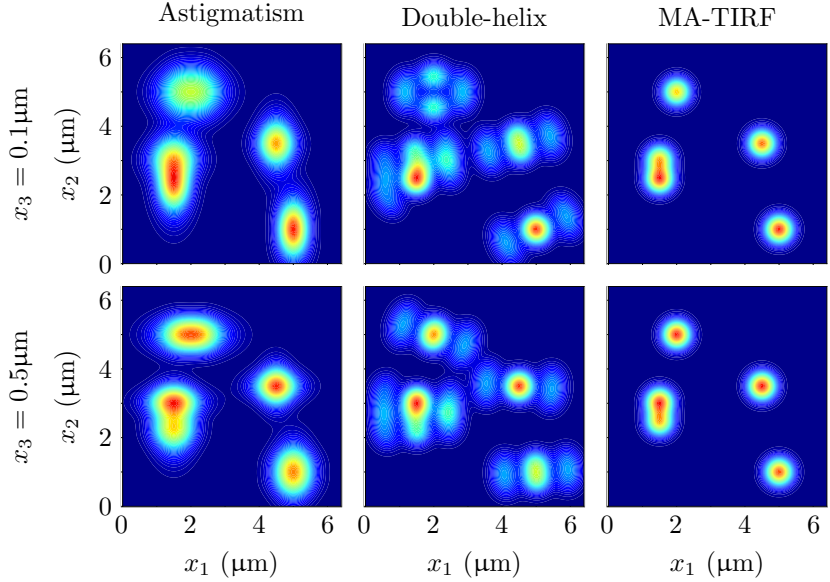
\begin{figure}[t]
		\centering
		\begin{tikzpicture}
		\begin{groupplot}[group style={group size= 3 by 2,                      
    					  horizontal sep=0.3cm, vertical sep=0.3cm},          
						  xmin=0,xmax=64,
					   	  ymin=0,ymax=64,
						  title style={yshift=-0.10cm},
						  grid=both,
						  xtick={0,20,40,60},
						  xticklabels={0,2,4,6},
						  ytick={0,20,40,60},
						  yticklabels={0,2,4,6},
						  axis equal image,
						  grid style={black},
    					  width=0.4\textwidth]
    	\nextgroupplot[enlargelimits=false,xticklabels={,,},ylabel style={align=center},ylabel={{$x_3=0.1\micron$ \\[0.2cm] $x_2$ ($\micron$)}},title=Astigmatism] 	
    	    \addplot[] graphics[xmin=-11,ymin=-9,xmax=80,ymax=74] {microscopy/models/etaV_ast_1};
    	\nextgroupplot[enlargelimits=false,yticklabels={,,},xticklabels={,,},title=Double-helix] 	
    	   \addplot[] graphics[xmin=-11,ymin=-9,xmax=80,ymax=74]  {microscopy/models/etaV_dh_1};
    	\nextgroupplot[enlargelimits=false,yticklabels={,,},xticklabels={,,},title=MA-TIRF] 			  
    		\addplot[] graphics[xmin=-11,ymin=-9,xmax=80,ymax=74]  {microscopy/models/etaV_laplace_1};    		  
    	\nextgroupplot[enlargelimits=false,ylabel style={align=center},ylabel={{$x_3=0.5\micron$ \\[0.2cm] $x_2$ ($\micron$)}},xlabel=$x_1$ ($\micron$)] 	
    	    \addplot[] graphics[xmin=-11,ymin=-9,xmax=80,ymax=74]  {microscopy/models/etaV_ast_2};
    	\nextgroupplot[enlargelimits=false,yticklabels={,,},xlabel=$x_1$ ($\micron$)] 	
    	    \addplot[] graphics[xmin=-11,ymin=-9,xmax=80,ymax=74]  {microscopy/models/etaV_dh_2};
    	\nextgroupplot[enlargelimits=false,yticklabels={,,},xlabel=$x_1$ ($\micron$)] 			  
    		\addplot[] graphics[xmin=-11,ymin=-9,xmax=80,ymax=74]  {microscopy/models/etaV_laplace_2};
    \end{groupplot}
	\end{tikzpicture}
		\caption{\label{fig:EtaV_microscopy} Numerical computation of $\etaVV$ at $x_3=0.1$ (top) and $x_3=0.5$ (bottom) for the three models and the measure $\measO$ given in~\eqref{sec:microscopy-eq:measO}. The colormap ranges from 0 (blue) to 1 (red).}
	\end{figure}

\subsection{Simulation setting}\label{subsec:simul-setting}
 
\paragraph{Imaged Structure.} 

Simulations were performed using the microtubules-like structure depicted in Figure~\ref{fig:sec-models-tubs}. It has been generated within the volume
 \begin{equation}
 	\Pos=[0,b_1]\times[0,b_2]\times[0,b_3] \subset\RR^3 \qwhereq b_1=b_2=6.4~\micron \text{ and } b_3=0.8~\micron.
\end{equation}
The filaments were obtained by randomly sampling many points along four curves defined by polynomial equations. To ensure a uniform distribution of the points along the curves, we first parametrized each curve by a piecewise linear function (with very small steps). Then, in order to give a width to the filaments, each point $x  \in \Pos$ randomly chosen on one of the curves is replaced by a point randomly chosen in a ball  centered at $x$ with  radius  $10$ nm. Thus, simulated filaments have a diameter of $20$ nm.

\begin{figure}[t]
\centering
\includegraphics[trim=3cm 3cm 3cm 3cm,width=0.5\linewidth,clip=true]{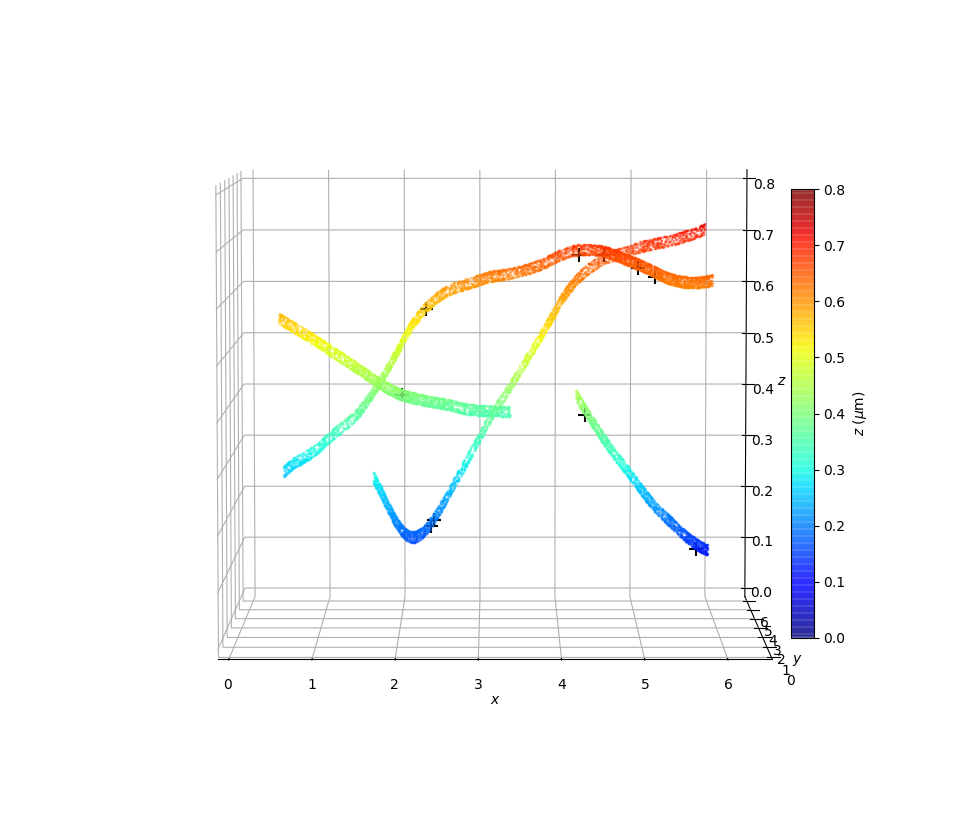}
\caption{Microtubules structure used for the simulations. The diameter of the filaments is $20$ nm. The color encodes the depth of molecules within the range $0-0.8~\micron$. Black crosses represent a subset of activated molecules (\ie a measure  $\measO$). }\label{fig:sec-models-tubs}
\end{figure}

\paragraph{Simulation of noiseless acquisitions.} 

The $\nMol_{\text{tot}}\in\NN^*$ molecules of the simulated  structure are divided into $\nGT\in\NN^*$ sparse set of  $\nMol\in\NN^*$ molecules using a random permutation (\ie $\nMol_{\text{tot}}=\nGT \times \nMol$). This models the sequential stochastic activation of fluorophores used in SMLM. For each of the $\nGT$ subsets of  molecules, we define a Radon measure composed of a sum of Dirac masses---located at the position of the molecules---with positive amplitudes
\eq{ \measO=\sum_{i=1}^\nMol \ampOi\dirac{\posOi} \qwhereq \ampOi>0 \qandq \posOi \in\Pos.}
The amplitudes are randomly generated within $[1,1.5]$.
An example of a set of activated molecules is shown in Figure~\ref{fig:sec-models-tubs} (black crosses). Now let ($N_1  \times N_2$) be the size of the grid of pixels on the detector plane, and $K$ be the number of focal planes (or the number of TIRF ``angles'', see Section~\ref{sec:FwdModels}) which are recorded. Then, the noiseless measurements $\obsO$  for an activated measure $\measO$ follow the model
\begin{equation}
\obsO = \Phi \measO,
\end{equation}
where $\Phi$ is defined in~\eqref{eq:Fwd}.

   Finally, it is noteworthy that in practice the number of activated molecules varies from one activation to another around an average value (which depends on the power of the excitation laser beam). However, fixing this number to $\nMol$ for each activated set of molecules allows us to better control the density of spikes in order to study the behaviour of the algorithm when the latter increases.

\paragraph{Noise Model.} 
 
There are two predominant sources of noise  in microscopy data. 
\begin{itemize}
 \item The shot noise which  is inherent to the quantum nature of  light (random emissions of photons). It is well modeled by a Poisson distribution whose intensity is the number of photon collected at each pixel. Given the noiseless acquisition $\obsO$, we normalize it such that 
\begin{equation}\label{eq:photNoise}
\max_{i \in \{1,\ldots,N_1N_2\}} \left(  \sum_{k=1}^{K} [\obsO]_{i,k} \right)  = \nPhoton,
\end{equation}
where $\nPhoton >0 $ denotes the maximal photon budget per pixel and controls the noise level. Then, each entry of  $\obsO$ is replaced by a realization of a Poisson distribution $\Pp$ with parameter $[\obsO]_{i,k}$.  It is noteworthy from \eqref{eq:photNoise} that the level of noise not only increases as $\nPhoton$ decreases, but it also increases with $K$. 
\item The readout noise $w_G$ of the camera. It is usually modeled by a Gaussian distribution with variance~$\sigma^2$.
\end{itemize}
Finally the noisy data are given by
\begin{equation}
   \obsw = \Pp(\obsO)+w_G.
\end{equation}

\subsection{Results}
 
For each of the three modalities presented in Section \ref{sec:FwdModels} (Double-Helix, Astigmatism, MA-TIRF), acquisitions where simulated using the optical parameters gathered in Table~\ref{Table:rparametersSImu}. These parameters have been tuned according to the experimental PSF used in the SMLM challenge \cite{Sage362517}.  Finally, we generated different experiments by varying the density of molecules $ \nMol \in \{ 5,10,15\}$ as well as the number of focal planes (or angles for the TIRF model) $K \in \{1,2,3,4\}$.


\begin{table}[t]
		\centering
	   \caption{\label{Table:rparametersSImu} Parameters used for data simulation. }
		\begin{tabular}{l|ccl} 
			\toprule
			\toprule
			  &   Parameter  &  Value  &   Description   \\  \midrule
			\multirow{9}{*}{\rotatebox{90}{All modalities}}  
						&$b_1=b_2$ & $6.4\micron$ & Region of interest \\
			& $b_3$  & $0.8\micron$ & Maximal depth of molecules \\
			& $N_1 = N_2$  & $64$ & Detector grid size \\		
			&$\NA$ & $1.49$ & Objective numerical aperture \\
			&$\nii$ & $1.515$ & Refractive index incident medium \\
			&$\nt$ & $1.333$ & Refractive index transmitted medium \\
			&$\lamb$ & $0.66\micron$ & Excitation wavelength \\
			&$\nPhoton$ & $1000$ & Photon budget\\
			&$\sigma$ & $10^{-4}$ & Variance of Gaussian noise \\
			 \midrule
			\multirow{5}{*}{\rotatebox{90}{Astigmatism}} &$\sigma_0$ & ${0.42\lamb}/{\NA}$ & PSF variance at focus\\
			&$\beta$ & $0.2\micron$ & Depth for which the variance is minimal \\
			&$d$ & ${\lamb\nii}/({2\NA^2})$ & Parameter related to the depth-of-field\\
			&$\alpha$ &$-0.79$ & Scaling constant \\
			& $(z_k)_{k=1}^K$ & $k b_3 /(K+1)$ & Focal planes \\
			 \midrule
			\multirow{4}{*}{\rotatebox{90}{Double-Helix}} & $\sigma_1=\sigma_2$ & ${0.42\lamb}/{\NA}$ & PSF variance \\
			& $\om$ & $1\micron$ & Distance between the two PSF lobes\\
			& $\theta_{\mathrm{speed}}$ & $ 0.3846 \pi$ rad/$\micron$ & Rotation speed of the PSF\\
			& $(z_k)_{k=1}^K$ & $k b_3 /(K+1)$ & Focal planes \\
			 \midrule
			\multirow{3}{*}{\rotatebox{90}{MA-TIRF}} & $\sigma_1=\sigma_2$ & ${0.42\lamb}/{\NA}$ & PSF variance \\
			& $(\alpha_k)_{k=1}^K$ & $\alc + \frac{\alm-\alc}{K-1}(k-1)$  & Incident angles  \\
			& $\alm$  & $\sin^{-1}(\NA / \nii)$  & Maximal incident angle \\
			\bottomrule
			\bottomrule
		\end{tabular}
	\end{table} 
	
\subsubsection{Metrics for evaluation}

   In order to assess the quality of the reconstructed volumes, we consider standard metrics which reflect both the detection rate and the localization error \cite{Sage362517,sage-quantitative2015}. Given a recovered frame and a tolerance radius $r >0$, we pair estimated molecules and ground truth (GT) molecules when the distance between them is lower than~$r$. Paired estimated molecules are then referred as true positive (TP) while unpaired ones as false positive (FP). Finally, the unpaired GT molecules are identified as false negative (FN). These quantities being determined for each frame, we can compute the Jaccard index (Jac), the Recall (Rec) and the Precision (Pre) metrics,
   \begin{equation}
      \mathrm{Jac} = \frac{\#\mathrm{TP}}{\#\mathrm{TP}+ \#\mathrm{FP}+\# \mathrm{FN}} \quad \mathrm{Rec}=\frac{\#\mathrm{TP}}{\#\mathrm{TP}+\# \mathrm{FN}} \quad \mathrm{Pre}= \frac{\#\mathrm{TP}}{\#\mathrm{TP}+ \#\mathrm{FP}}.
   \end{equation}
The Jaccard index measures the overall performance of detection by giving a measure of similarity between the two sets of points. The Recall and Precision metrics can then be used to measure the ability of an algorithm to minimize FN  and FP detection, respectively. Finally, the TP molecules are used to compute the root mean squared error (RMSE) along each dimension 
\begin{equation}
	\mathrm{RMSE}_{x_1} = \sqrt{\frac{1}{\# \mathrm{TP}} \sum_{i \in \mathrm{TP}} ([x_i]_1 - [x_{0,i}]_1)^2},
\end{equation}
and similarly for $\mathrm{RMSE}_{x_2}$ and $\mathrm{RMSE}_{x_3}$. Note that, by construction, the RMSE is bounded by the radius $r$. Hence, in the following, we use different values for $r$ depending on the metric of interest.

\subsubsection{Choice of the regularization parameter $\lambda$}

For each experiment (\ie  $\nMol\in \{5,10,15\}$ and   $\K \in \{1,2,3,4\}$), we choose the value of the regularization parameter $\lambda$ which maximizes the Jaccard index for a radius of $r=0.02$ (\ie $20$ nm). This training step was performed over a small subset of initial measures $\measO$ (\ie frames). Then the recovery was done on the complete dataset using the optimal $\lambda$ found. 


\subsubsection{Discussion}

The evolution of Jaccard,  Recall, and  Precision metrics with respect to  $K$ are depicted in Figure~\ref{sec:microscopy-fig:indices}. As expected, they all increase with  $K$. However, although the improvement is significant from $K=1$ to $K=2$, higher values  only provide marginal gains. This can be explained by the fact that the photon budget $\nPhoton$ is distributed over the  $K$ acquisitions (see equation \eqref{eq:photNoise}). Hence, the additional axial information brought by increasing the number of acquisitions per activation should be balanced by the higher noise corrupting  the data. Another observation from these plots concerns the degradation of the performance as the density (\ie the number of molecules $\nMol$) increases.



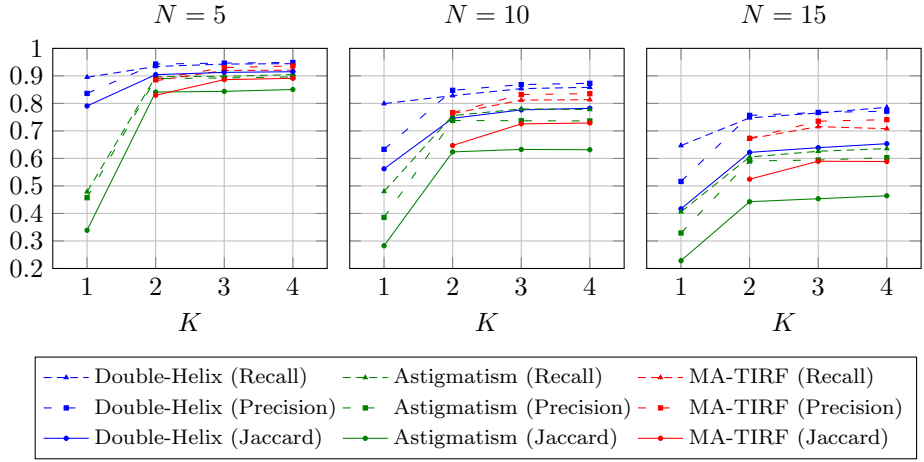
\begin{figure}[t]
		\centering
				\begin{tikzpicture}
		\begin{groupplot}[group style={group size= 3 by 1,                      
    					  horizontal sep=0.3cm, vertical sep=0.3cm},          
						  xmin=0.5,xmax=4.5,
					   	  ymin=0.2,ymax=1,
						  grid=both,
						  xtick={1,2,3,4},
						  ytick={0.2,0.3,0.4,0.5,0.6,0.7,0.8,0.9,1},
						  xlabel={$K$},
						  legend columns=3,
					      legend style={legend cell align=left,at={(3.15,-0.4)},font=\footnotesize},
    					  width=0.4\textwidth]
    	\nextgroupplot[title={$N=5$}] 	  
    	    	  \addplot[blue,densely dashed,mark=triangle*,every mark/.append style={solid},mark size=0.9] table{figures/microscopy/results/DHRec_N5.dat};	
    	    	   \addplot[darkgreen,densely dashed,mark=triangle*,every mark/.append style={solid},mark size=0.9] table{figures/microscopy/results/AstRec_N5.dat};	
    	    	   \addplot[red,densely dashed,mark=triangle*,every mark/.append style={solid},mark size=0.9] table{figures/microscopy/results/LapRec_N5.dat};	
    	    	  \addplot[blue,loosely dashed,mark=square*,every mark/.append style={solid},mark size=0.8] table{figures/microscopy/results/DHPre_N5.dat};	
    	    	  \addplot[darkgreen,loosely dashed,mark=square*,every mark/.append style={solid},mark size=0.8] table{figures/microscopy/results/AstPre_N5.dat};		
    	    	  \addplot[red,loosely dashed,mark=square*,every mark/.append style={solid},mark size=0.8] table{figures/microscopy/results/LapPre_N5.dat};		 	
    	    	   \addplot[blue,mark=*,mark size=0.8] table{figures/microscopy/results/DHJac_N5.dat};
    	    	  \addplot[darkgreen,mark=*,mark size=0.8] table{figures/microscopy/results/AstJac_N5.dat};		  
    	    	   \addplot[red,mark=*,mark size=0.8] table{figures/microscopy/results/LapJac_N5.dat};		 		
    	    	  \legend{Double-Helix (Recall),Astigmatism (Recall),MA-TIRF (Recall),Double-Helix (Precision),Astigmatism (Precision),MA-TIRF (Precision),Double-Helix (Jaccard),Astigmatism (Jaccard),MA-TIRF (Jaccard)}  
     	\nextgroupplot[title={$N=10$},yticklabels={,,}] 	
     	    	  \addplot[blue,mark=*,mark size=0.8] table{figures/microscopy/results/DHJac_N10.dat};		  
    	    	  \addplot[blue,densely dashed,mark=triangle*,every mark/.append style={solid},mark size=0.9] table{figures/microscopy/results/DHRec_N10.dat};	
    	    	  \addplot[blue,loosely dashed,mark=square*,every mark/.append style={solid},mark size=0.8] table{figures/microscopy/results/DHPre_N10.dat};	
    	    	  \addplot[darkgreen,mark=*,mark size=0.8] table{figures/microscopy/results/AstJac_N10.dat};		  
    	    	  \addplot[darkgreen,densely dashed,mark=triangle*,every mark/.append style={solid},mark size=0.9] table{figures/microscopy/results/AstRec_N10.dat};	
    	    	  \addplot[darkgreen,loosely dashed,mark=square*,every mark/.append style={solid},mark size=0.8] table{figures/microscopy/results/AstPre_N10.dat};		    	    	  
    	    	  \addplot[red,mark=*,mark size=0.8] table{figures/microscopy/results/LapJac_N10.dat};		  
    	    	  \addplot[red,densely dashed,mark=triangle*,every mark/.append style={solid},mark size=0.9] table{figures/microscopy/results/LapRec_N10.dat};	
    	    	  \addplot[red,loosely dashed,mark=square*,every mark/.append style={solid},mark size=0.8] table{figures/microscopy/results/LapPre_N10.dat};		 	 
      	\nextgroupplot[title={$N=15$},yticklabels={,,}] 	
      	     	  \addplot[blue,mark=*,mark size=0.8] table{figures/microscopy/results/DHJac_N15.dat};		  
    	    	  \addplot[blue,densely dashed,mark=triangle*,every mark/.append style={solid},mark size=0.9] table{figures/microscopy/results/DHRec_N15.dat};	
    	    	  \addplot[blue,loosely dashed,mark=square*,every mark/.append style={solid},mark size=0.8] table{figures/microscopy/results/DHPre_N15.dat};	
    	    	  \addplot[darkgreen,mark=*,mark size=0.8] table{figures/microscopy/results/AstJac_N15.dat};		  
    	    	  \addplot[darkgreen,densely dashed,mark=triangle*,every mark/.append style={solid},mark size=0.9] table{figures/microscopy/results/AstRec_N15.dat};	
    	    	  \addplot[darkgreen,loosely dashed,mark=square*,every mark/.append style={solid},mark size=0.8] table{figures/microscopy/results/AstPre_N15.dat};		    	    	  
    	    	  \addplot[red,mark=*,mark size=0.8] table{figures/microscopy/results/LapJac_N15.dat};		  
    	    	  \addplot[red,densely dashed,mark=triangle*,every mark/.append style={solid},mark size=0.9] table{figures/microscopy/results/LapRec_N15.dat};	
    	    	  \addplot[red,loosely dashed,mark=square*,every mark/.append style={solid},mark size=0.8] table{figures/microscopy/results/LapPre_N15.dat};		 	 
    				\end{groupplot}
	\end{tikzpicture}
		\caption{\label{sec:microscopy-fig:indices} Evolution of Jaccard, Recall and Precision metrics with respect to $K$, for a radius of detection $r=0.02$ ($20$nm).}
	\end{figure}
	
   These results also bring useful information in order to improve existing systems. Let us recall that current commercial systems includes Astigmatism and Double-Helix modalities with one focal plane (\ie $K=1$). Hence, it can be inferred from our simulations that recording an image at two focal planes for each activation of molecules would not only  improve significantly the reconstruction quality  but make the reconstructions more robust when the density of molecules  increases. These observations corroborate the study in \cite{huang-3dastmultifp} where the authors use a multi-focus astigmatism system. However, to preserve a reasonable temporal resolution, multi-focal acquisitions require to synchronize several cameras \cite{huang-3dastmultifp} which can be expensive and lead to delicate calibration procedures (\textit{e.g.} alignment and PSF aberrations for each camera). In that respect, the proposed combination of SMLM with MA-TIRF offers an interesting alternative to improve existing systems. First, it has the potential to provide reconstructions whose quality compares favorably with the Double-Helix model while improving over the Astigmatism modality.  Second, it only requires the use of galvanometric mirrors  to control the incident angle \cite{boulanger2014fast}. It is noteworthy that commercial SMLM systems generally use a single TIRF illumination to limit the illumination depth. Finally, as for the multi-focus strategy, MA-TIRF requires some calibrations (\textit{e.g.}  incident angles) for which there exist dedicated procedures \cite{boulanger2014fast,Soubies2016}.
   
\begin{rem}
Although the PSFs used for these simulations have been adjusted using experimental PSFs, they remain idealistic. This is particularly the case for the Double-Helix which in practice deviates from two Gaussian lobes that coil around each other along $z$ \cite{Sage362517}. In contrast, the Gaussian model yields a precise approximation of the MA-TIRF (\ie widefield) PSF \cite{Zhang2007}. The main simplification for the latter lies in the fact that each molecule is activated only during one set of multi-angle acquisitions. This would not be the case with a real implementation of the system and the model should be improved by considering the temporal aspect of the acquisition. However, the present study constitutes a first proof-of-concept and future developments will consider a more sophisticated model.
\end{rem}


\begin{figure}[!h]
		\centering
				\begin{tikzpicture}
		\begin{groupplot}[group style={group size= 3 by 1,                      
    					  horizontal sep=0.3cm, vertical sep=0.3cm},          
						  xmin=0.5,xmax=4.5,
					   	  ymin=0,ymax=35,
						  grid=both,
						  xtick={1,2,3,4},						  
						  ytick={5,10,15,20,25,30,35},
						  xlabel={$K$},
						  legend columns=3,
					      legend style={legend cell align=left,at={(3.15,-0.4)},font=\footnotesize},
    					  width=0.4\textwidth]
    	\nextgroupplot[title={$N=5$}] 			  
    	    	  \addplot[blue,densely dashed,mark=triangle*,every mark/.append style={solid},mark size=0.9] table{figures/microscopy/results/DH_RMSEx_N5.dat};	
    	    	   \addplot[darkgreen,densely dashed,mark=triangle*,every mark/.append style={solid},mark size=0.9] table{figures/microscopy/results/Ast_RMSEx_N5.dat};	
    	    	   \addplot[red,densely dashed,mark=triangle*,every mark/.append style={solid},mark size=0.9] table{figures/microscopy/results/Lap_RMSEx_N5.dat};	
    	    	  \addplot[blue,loosely dashed,mark=square*,every mark/.append style={solid},mark size=0.8] table{figures/microscopy/results/DH_RMSEy_N5.dat};	
    	    	  \addplot[darkgreen,loosely dashed,mark=square*,every mark/.append style={solid},mark size=0.8] table{figures/microscopy/results/Ast_RMSEy_N5.dat};		
    	    	  \addplot[red,loosely dashed,mark=square*,every mark/.append style={solid},mark size=0.8] table{figures/microscopy/results/Lap_RMSEy_N5.dat};	
    	    	   \addplot[blue,mark=*,mark size=0.8] table{figures/microscopy/results/DH_RMSEz_N5.dat};
    	    	  \addplot[darkgreen,mark=*,mark size=0.8] table{figures/microscopy/results/Ast_RMSEz_N5.dat};		  
    	    	   \addplot[red,mark=*,mark size=0.8] table{figures/microscopy/results/Lap_RMSEz_N5.dat};		 	 	
    	    	  \legend{Double-Helix ($\mathrm{RMSE}_{x_1}$),Astigmatism ($\mathrm{RMSE}_{x_1}$),MA-TIRF ($\mathrm{RMSE}_{x_1}$),Double-Helix ($\mathrm{RMSE}_{x_2}$),Astigmatism ($\mathrm{RMSE}_{x_2}$),MA-TIRF ($\mathrm{RMSE}_{x_2}$),Double-Helix ($\mathrm{RMSE}_{x_3}$),Astigmatism ($\mathrm{RMSE}_{x_3}$),MA-TIRF ($\mathrm{RMSE}_{x_3}$)}  
     	\nextgroupplot[title={$N=10$},yticklabels={,,}] 	
     	    	  \addplot[blue,mark=*,mark size=0.8] table{figures/microscopy/results/DH_RMSEz_N10.dat};		  
    	    	  \addplot[blue,densely dashed,mark=triangle*,every mark/.append style={solid},mark size=0.9] table{figures/microscopy/results/DH_RMSEx_N10.dat};	
    	    	  \addplot[blue,loosely dashed,mark=square*,every mark/.append style={solid},mark size=0.8] table{figures/microscopy/results/DH_RMSEy_N10.dat};	
    	    	  \addplot[darkgreen,mark=*,mark size=0.8] table{figures/microscopy/results/Ast_RMSEz_N10.dat};		  
    	    	  \addplot[darkgreen,densely dashed,mark=triangle*,every mark/.append style={solid},mark size=0.9] table{figures/microscopy/results/Ast_RMSEx_N10.dat};	
    	    	  \addplot[darkgreen,loosely dashed,mark=square*,every mark/.append style={solid},mark size=0.8] table{figures/microscopy/results/Ast_RMSEy_N10.dat};		    	    	  
    	    	  \addplot[red,mark=*,mark size=0.8] table{figures/microscopy/results/Lap_RMSEz_N10.dat};		  
    	    	  \addplot[red,densely dashed,mark=triangle*,every mark/.append style={solid},mark size=0.9] table{figures/microscopy/results/Lap_RMSEx_N10.dat};	
    	    	  \addplot[red,loosely dashed,mark=square*,every mark/.append style={solid},mark size=0.8] table{figures/microscopy/results/Lap_RMSEy_N10.dat};		 	 
      	\nextgroupplot[title={$N=15$},yticklabels={,,}] 	
      	     	  \addplot[blue,mark=*,mark size=0.8] table{figures/microscopy/results/DH_RMSEz_N15.dat};		  
    	    	  \addplot[blue,densely dashed,mark=triangle*,every mark/.append style={solid},mark size=0.9] table{figures/microscopy/results/DH_RMSEx_N15.dat};	
    	    	  \addplot[blue,loosely dashed,mark=square*,every mark/.append style={solid},mark size=0.8] table{figures/microscopy/results/DH_RMSEy_N15.dat};	
    	    	  \addplot[darkgreen,mark=*,mark size=0.8] table{figures/microscopy/results/Ast_RMSEz_N15.dat};		  
    	    	  \addplot[darkgreen,densely dashed,mark=triangle*,every mark/.append style={solid},mark size=0.9] table{figures/microscopy/results/Ast_RMSEx_N15.dat};	
    	    	  \addplot[darkgreen,loosely dashed,mark=square*,every mark/.append style={solid},mark size=0.8] table{figures/microscopy/results/Ast_RMSEy_N15.dat};		    	    	  
    	    	  \addplot[red,mark=*,mark size=0.8] table{figures/microscopy/results/Lap_RMSEz_N15.dat};		  
    	    	  \addplot[red,densely dashed,mark=triangle*,every mark/.append style={solid},mark size=0.9] table{figures/microscopy/results/Lap_RMSEx_N15.dat};	
    	    	  \addplot[red,loosely dashed,mark=square*,every mark/.append style={solid},mark size=0.8] table{figures/microscopy/results/Lap_RMSEy_N15.dat};		 	 
    				\end{groupplot}
	\end{tikzpicture}
		\caption{\label{sec:microscopy-fig:rmse} Evolution of the RMSE (nm) with respect to  $K$, for a radius of detection $r=0.1$ ($100$nm).}
	\end{figure}
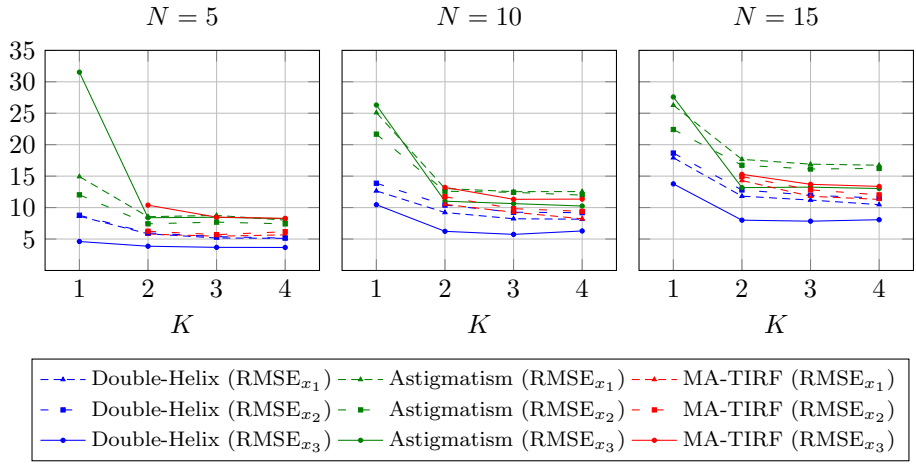

The results in terms of RMSE presented in Figure~\ref{sec:microscopy-fig:rmse} lead to similar interpretations. First, the detection accuracy is increasing with  $K$ while decreasing with $\nMol$. Second, we can observe that the differences between the Double-Helix and the MA-TIRF models mainly come from the precision in $x_3$. Indeed, they both lead to the same lateral RMSE (around $5$nm when $\nMol=5$ and $12$nm at the highest density $\nMol=15$), but the Double-Helix enjoys a better axial RMSE. This reflects the challenging problem that constitues the inversion  of the Laplace transform, which is related to the MA-TIRF model.  Nevertheless, the SWF algorithm performs quite well at this task (see also Figures~\ref{sec:microscopy-fig:tub-Kfixe} and~\ref{sec:microscopy-fig:tub-Nfixe}). Another observation concerns the fact that the Double-Helix can reach a better axial than lateral RMSE. This fact, which was also observed in the recent SMLM challenge \cite{Sage362517}, can be explained by the large lateral support of the Double-Helix PSF as well as its good axial discrimination.


Finally, three-dimensional representations of the recovered structures are presented in Figures~\ref{sec:microscopy-fig:tub-Kfixe} and~\ref{sec:microscopy-fig:tub-Nfixe} for a fixed  $K=4$ and $\nMol=10$, respectively. These figures complete and illustrate the observations made with the computed metrics.


\begin{figure}
\centering
\begin{tikzpicture}
\node at (0,0) {\includegraphics[width=0.33\linewidth]{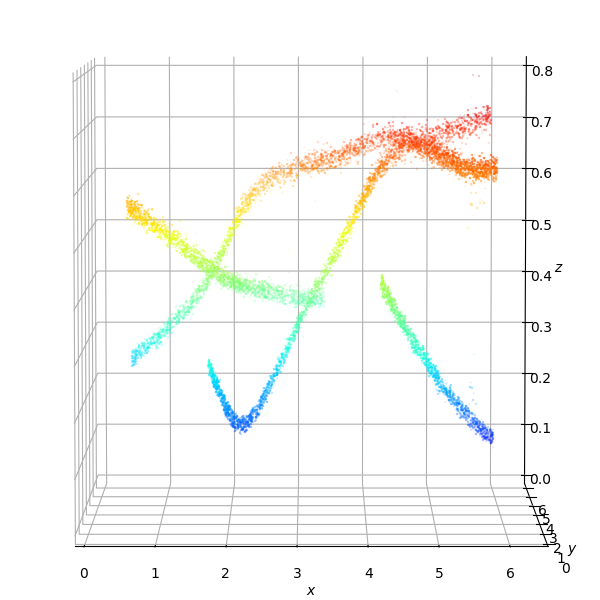}};
\node at (4,0) {\includegraphics[width=0.33\linewidth]{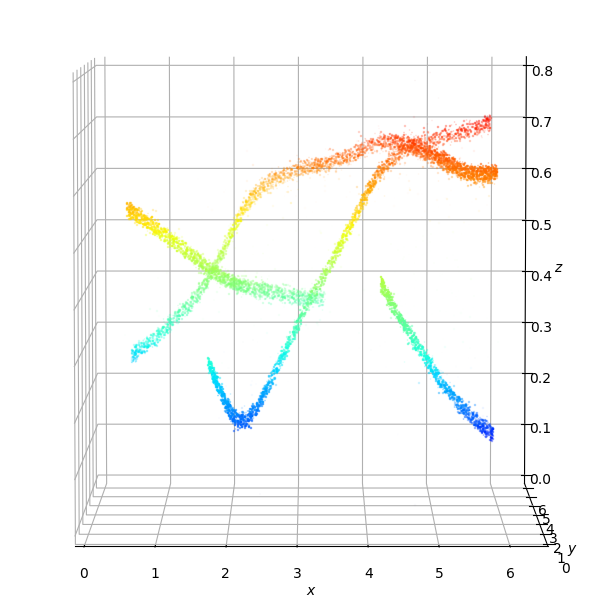}};
\node at (8,0) {\includegraphics[width=0.33\linewidth]{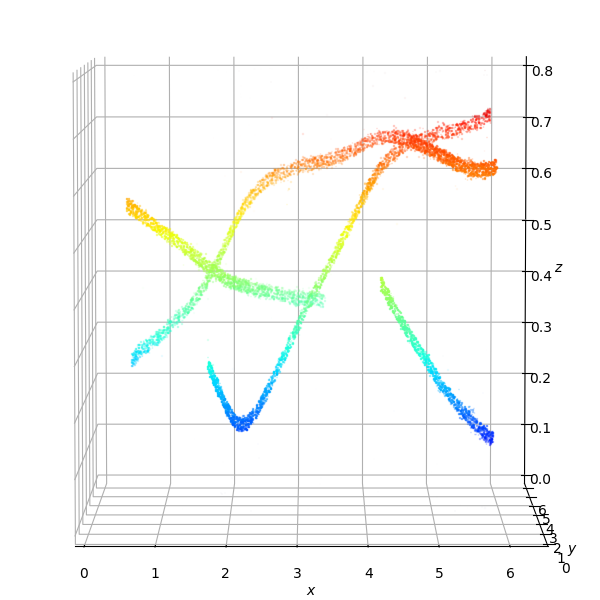}};
\node at (0,-4) {\includegraphics[width=0.33\linewidth]{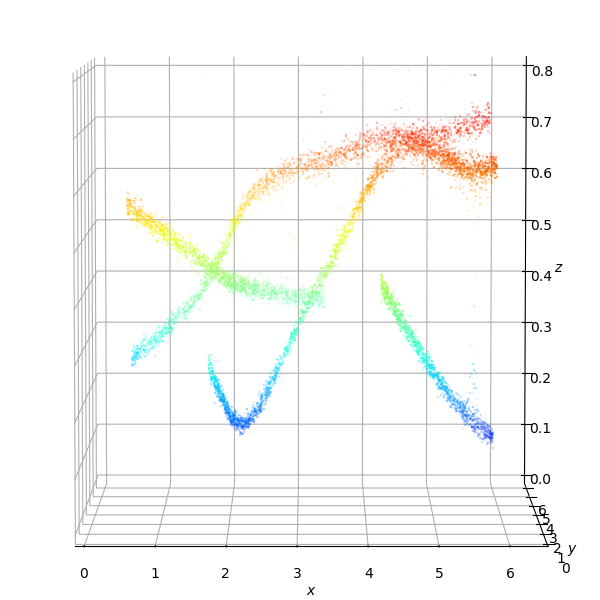}};
\node at (4,-4) {\includegraphics[width=0.33\linewidth]{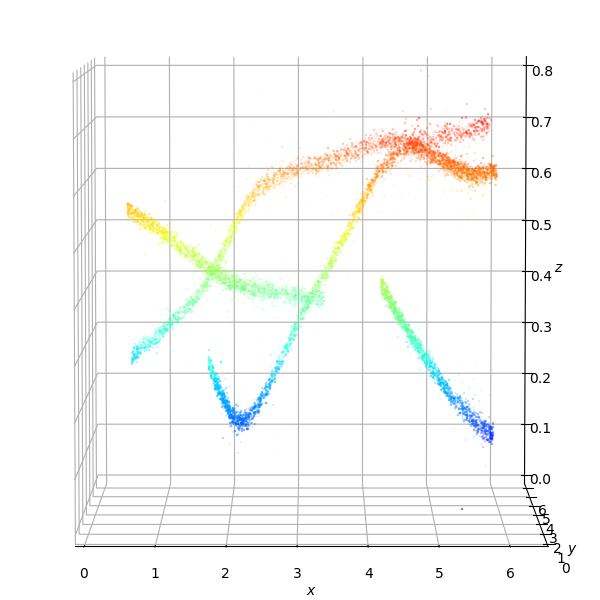}};
\node at (8,-4) {\includegraphics[width=0.33\linewidth]{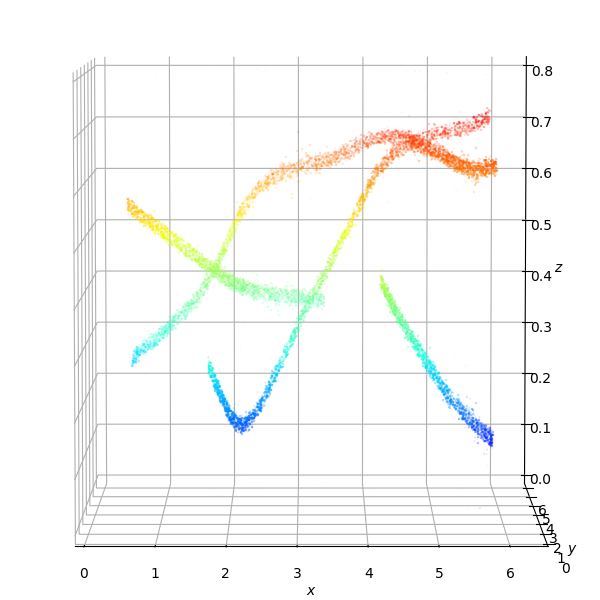}};
\node at (0,-8) {\includegraphics[width=0.33\linewidth]{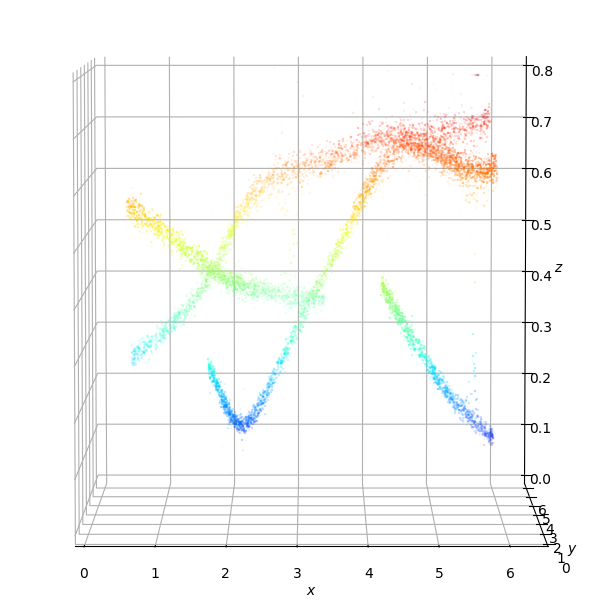}};
\node at (4,-8) {\includegraphics[width=0.33\linewidth]{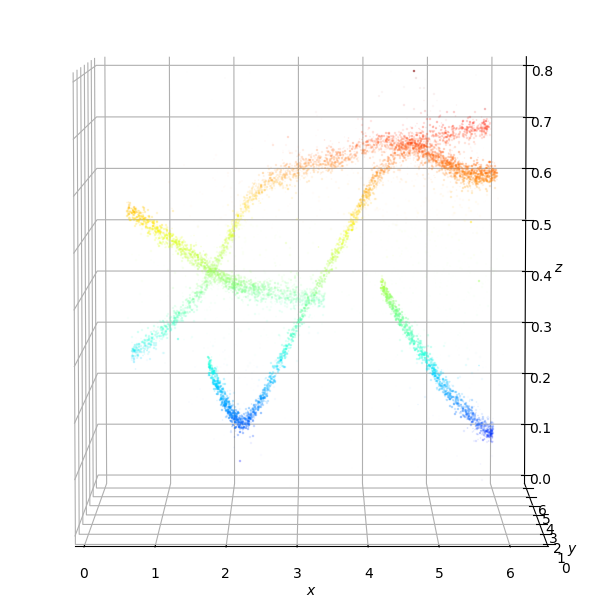}};
\node at (8,-8) {\includegraphics[width=0.33\linewidth]{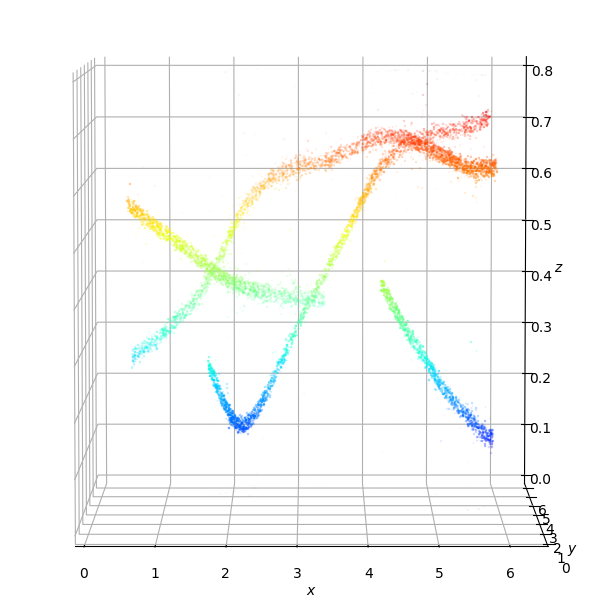}};
\node at (0,2.1) {MA-TIRF};\node at (4,2.1) {Astigmatism};\node at (8,2.1) {Double-Helix};
\node[rotate=90] at (-2.1,0) {$\nMol=5$};\node[rotate=90] at (-2.1,-4) {$\nMol=10$};\node[rotate=90] at (-2.1,-8) {$\nMol=15$};
\end{tikzpicture}
\caption{\label{sec:microscopy-fig:tub-Kfixe} Recovered structures for $\K=4$.}
\end{figure}


\begin{figure}
\centering
\begin{tikzpicture}
\node at (4,0) {\includegraphics[width=0.33\linewidth]{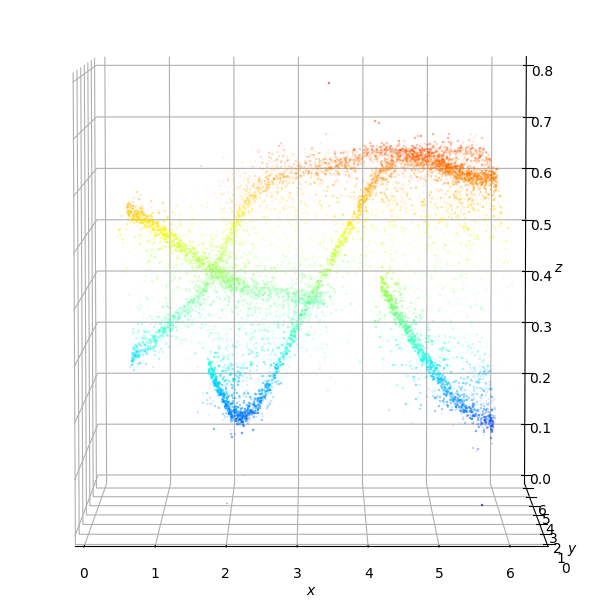}};
\node at (8,0) {\includegraphics[width=0.33\linewidth]{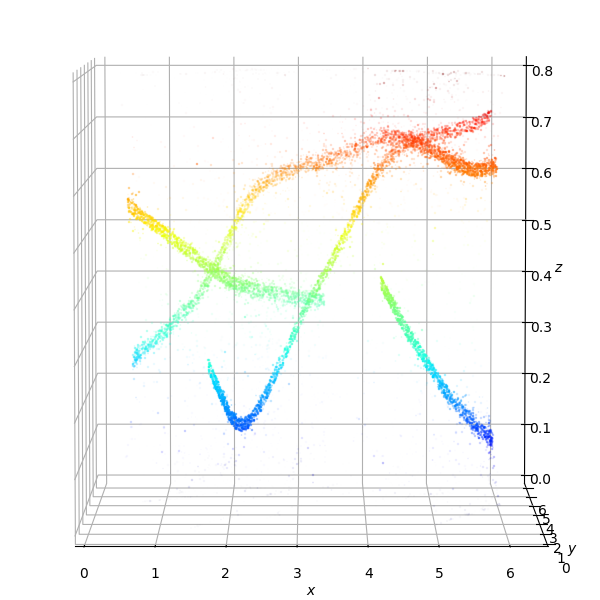}};
\node at (0,-4) {\includegraphics[width=0.33\linewidth]{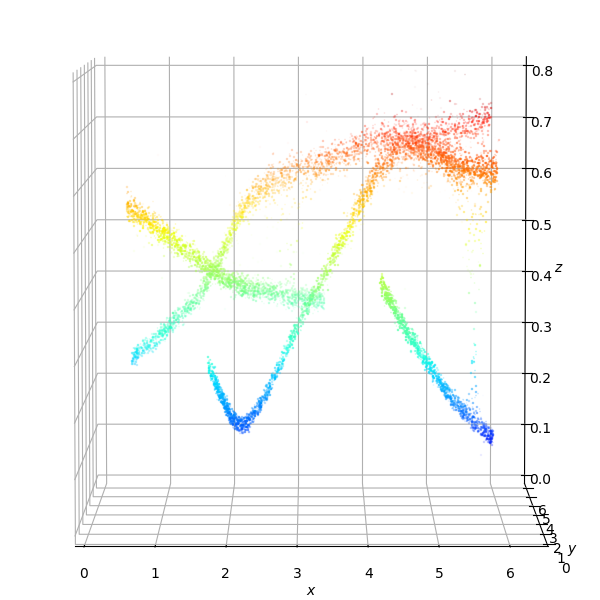}};
\node at (4,-4) {\includegraphics[width=0.33\linewidth]{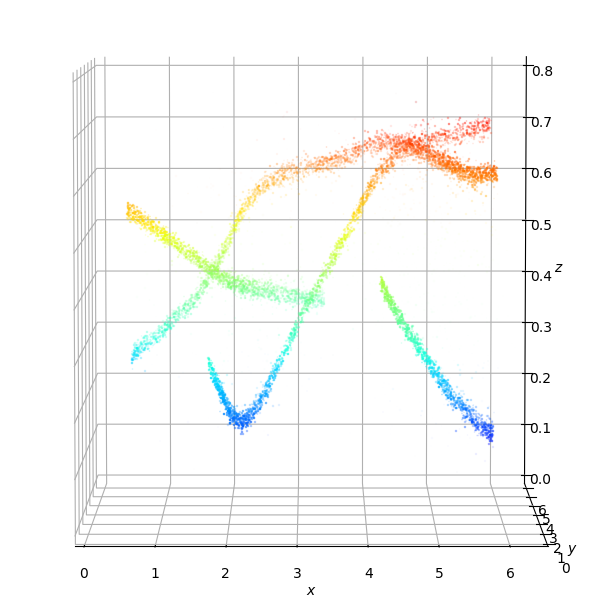}};
\node at (8,-4) {\includegraphics[width=0.33\linewidth]{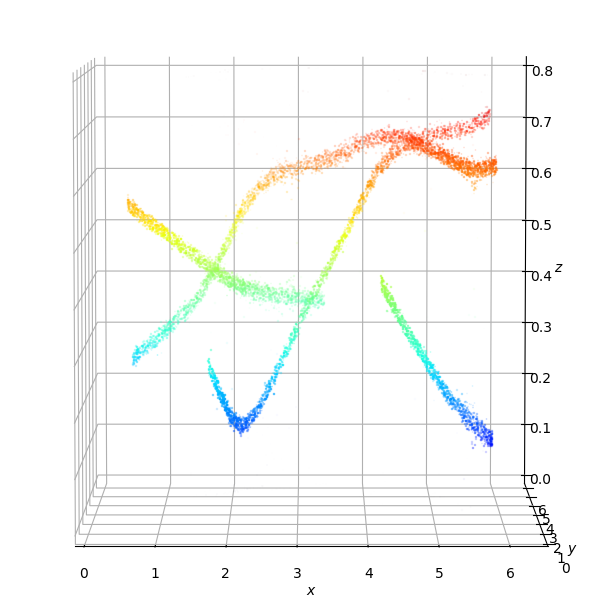}};
\node at (0,-8) {\includegraphics[width=0.33\linewidth]{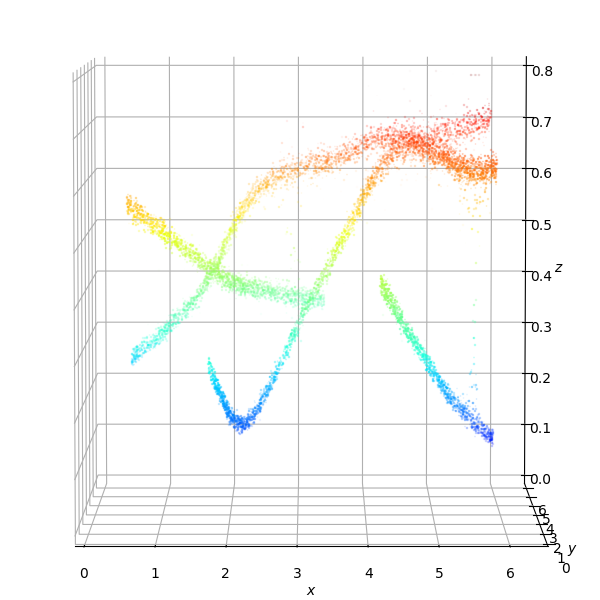}};
\node at (4,-8) {\includegraphics[width=0.33\linewidth]{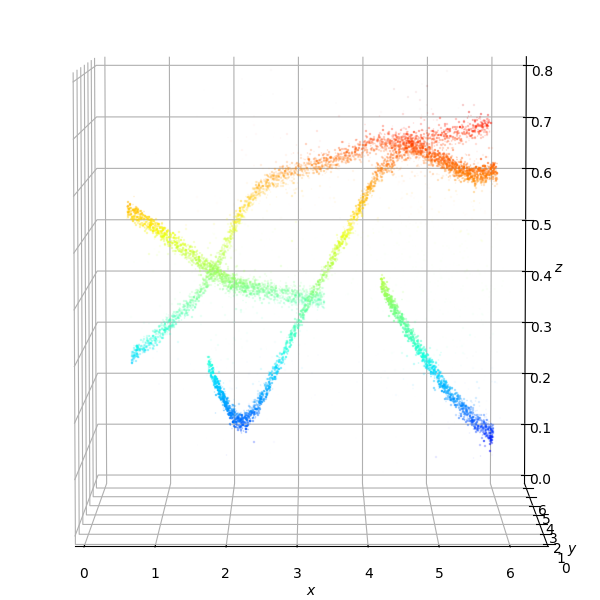}};
\node at (8,-8) {\includegraphics[width=0.33\linewidth]{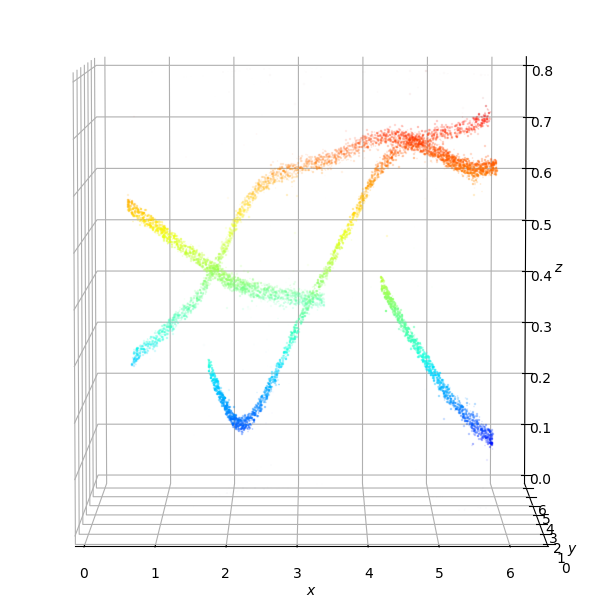}};
\node at (0,2.1) {MA-TIRF};\node at (4,2.1) {Astigmatism};\node at (8,2.1) {Double-Helix};
\node[rotate=90] at (-2.1,0) {$\K=1$};\node[rotate=90] at (-2.1,-4) {$\K=2$};\node[rotate=90] at (-2.1,-8) {$\K=3$};
\end{tikzpicture}
\caption{\label{sec:microscopy-fig:tub-Nfixe} Recovered structures for $\nMol=10$.}
\end{figure}

\section*{Conclusion}

This paper demonstrated from both theoretical and practical perspectives the Sliding Frank-Wolfe Algorithm, in particular when facing challenging non-translation invariant operator such as the Laplace kernels. Such operators lead to difficulties in estimating the spikes positions which is efficiently addressed by non-convex update step of the grid location. The BLASSO method, coupled with this Sliding Frank-Wolfe solver, is well adapted to these non-convolutive operators because it does not rely on spectral (Fourier) methods and can be analyzed theoretically through the prism of convex duality and vanishing certificates. 

\section*{Acknowledgement}

The authors would like to thank Laure Blanc-F\'eraud for initiating this collaboration and for stimulating discussions.
The work of Gabriel Peyr\'e has been supported by the European Research Council (ERC project NORIA).
The work of Emmanuel Soubies has been supported by the European Research Council (ERC project GlobalBioIm).

\section*{References}
\bibliographystyle{plain}
\bibliography{biblio,biblio3D}

\begin{thebibliography}{10}

\bibitem{axelrod1981cell}
Daniel Axelrod.
\newblock Cell-substrate contacts illuminated by total internal reflection
  fluorescence.
\newblock {\em The Journal of cell biology}, 89(1):141--145, 1981.

\bibitem{axelrod-total2008}
Daniel Axelrod.
\newblock Total internal reflection fluorescence microscopy.
\newblock {\em Methods in cell biology}, 89:169--221, 02 2008.

\bibitem{azais-spike2014}
Jean-Marc Aza\"{i}s, Yohann de~Castro, and Fabrice Gamboa.
\newblock Spike detection from inaccurate samplings.
\newblock {\em Appl. Comput. Harmon. Anal.}, 38(2):177--195, 2015.

\bibitem{beck-fista2009}
Amir Beck and Marc Teboulle.
\newblock A fast iterative shrinkage-thresholding algorithm for linear inverse
  problems.
\newblock {\em SIAM J. Imaging Sci.}, 2(1):183--202, 2009.

\bibitem{Betzig1642}
Eric Betzig, George~H. Patterson, Rachid Sougrat, O.~Wolf Lindwasser, Scott
  Olenych, Juan~S. Bonifacino, Michael~W. Davidson, Jennifer
  Lippincott-Schwartz, and Harald~F. Hess.
\newblock Imaging intracellular fluorescent proteins at nanometer resolution.
\newblock {\em Science}, 313(5793):1642--1645, 2006.

\bibitem{bhaskar-atomic2011}
Badri~Narayan Bhaskar, Gongguo Tang, and Benjamin Recht.
\newblock Atomic norm denoising with applications to line spectral estimation.
\newblock {\em IEEE Trans. Signal Process.}, 61(23):5987--5999, 2013.

\bibitem{davies-it2008}
Thomas Blumensath and Mike~E. Davies.
\newblock Iterative thresholding for sparse approximations.
\newblock {\em J. Fourier Anal. Appl.}, 14(5-6):629--654, 2008.

\bibitem{davies-iht2009}
Thomas Blumensath and Mike~E. Davies.
\newblock Iterative hard thresholding for compressed sensing.
\newblock {\em Applied and Computational Harmonic Analysis}, 27(3):265 -- 274,
  2009.

\bibitem{boulanger2014fast}
J{\'e}r{\^o}me Boulanger, Charles Gueudry, Daniel M{\"u}nch, Bertrand Cinquin,
  Perrine Paul-Gilloteaux, Sabine Bardin, Christophe Gu{\'e}rin, Fabrice
  Senger, Laurent Blanchoin, and Jean Salamero.
\newblock Fast high-resolution 3d total internal reflection fluorescence
  microscopy by incidence angle scanning and azimuthal averaging.
\newblock {\em Proceedings of the National Academy of Sciences},
  111(48):17164--17169, 2014.

\bibitem{boyd-adcg2015}
Nicholas Boyd, Geoffrey Schiebinger, and Benjamin Recht.
\newblock The alternating descent conditional gradient method for sparse
  inverse problems.
\newblock {\em SIAM J. Optim.}, 27(2):616--639, 2017.

\bibitem{boyd2004convex}
Stephen Boyd and Lieven Vandenberghe.
\newblock {\em Convex optimization}.
\newblock Cambridge university press, 2004.

\bibitem{bredies-inverse2013}
Kristian Bredies and Hanna~Katriina Pikkarainen.
\newblock Inverse problems in spaces of measures.
\newblock {\em ESAIM Control Optim. Calc. Var.}, 19(1):190--218, 2013.

\bibitem{cadzow-signal1988}
James~A Cadzow.
\newblock Signal enhancement-a composite property mapping algorithm.
\newblock {\em IEEE Transactions on Acoustics, Speech, and Signal Processing},
  36(1):49--62, 1988.

\bibitem{candes-superresolution2013}
Emmanuel~J. Cand\`es and Carlos Fernandez-Granda.
\newblock Super-resolution from noisy data.
\newblock {\em J. Fourier Anal. Appl.}, 19(6):1229--1254, 2013.

\bibitem{candes-towards2013}
Emmanuel~J. Cand\`es and Carlos Fernandez-Granda.
\newblock Towards a mathematical theory of super-resolution.
\newblock {\em Comm. Pure Appl. Math.}, 67(6):906--956, 2014.

\bibitem{catala-rank2017}
Paul Catala, Vincent Duval, and Gabriel Peyr\'e.
\newblock A low-rank approach to off-the-grid sparse deconvolution.
\newblock {\em PrePrint}, 2017.

\bibitem{chen-atomic1998}
Scott~Shaobing Chen, David~L. Donoho, and Michael~A. Saunders.
\newblock Atomic decomposition by basis pursuit.
\newblock {\em SIAM J. Sci. Comput.}, 20(1):33--61, 1998.

\bibitem{combettes-fb2005}
Patrick~L. Combettes and Val\'{e}rie~R. Wajs.
\newblock Signal recovery by proximal forward-backward splitting.
\newblock {\em Multiscale Model. Simul.}, 4(4):1168--1200, 2005.

\bibitem{condat-cadzow2015}
Laurent Condat and Akira Hirabayashi.
\newblock Cadzow denoising upgraded: a new projection method for the recovery
  of {D}irac pulses from noisy linear measurements.
\newblock {\em Sampl. Theory Signal Image Process.}, 14(1):17--47, 2015.

\bibitem{ferreira-2018tight}
Maxime~Ferreira Da~Costa and Wei Dai.
\newblock A tight converse to the spectral resolution limit via convex
  programming.
\newblock {\em arXiv preprint arXiv:1801.04761}, 2018.

\bibitem{daubechies-ist}
Ingrid Daubechies, Michel Defrise, and Christine De~Mol.
\newblock An iterative thresholding algorithm for linear inverse problems with
  a sparsity constraint.
\newblock {\em Comm. Pure Appl. Math.}, 57(11):1413--1457, 2004.

\bibitem{deCastro-exact2012}
Yohann de~Castro and Fabrice Gamboa.
\newblock Exact reconstruction using {B}eurling minimal extrapolation.
\newblock {\em J. Math. Anal. Appl.}, 395(1):336--354, 2012.

\bibitem{decastro-semi2015}
Yohann De~Castro, Fabrice Gamboa, Didier Henrion, and Jean-Bernard Lasserre.
\newblock Exact solutions to super resolution on semi-algebraic domains in
  higher dimensions.
\newblock {\em IEEE Trans. Inform. Theory}, 63(1):621--630, 2017.

\bibitem{prony}
G.C.F.M.R. de~Prony.
\newblock {\em Essai exp{\'e}rimental et analytique sur les lois de la
  dilatabilit{\'e} des fluides {\'e}lastiques et sur celles de la force
  expansive de la vapeur de l'eau et de la vapeur de l'alcool {\`a}
  diff{\'e}rentes temp{\'e}ratures, par R. Prony ...}
\newblock 1795.

\bibitem{demanet-recoverability2014}
Laurent Demanet and Nam Nguyen.
\newblock The recoverability limit for superresolution via sparsity.
\newblock {\em preprint arXiv:1502.01385}, 2015.

\bibitem{demyanov-1970approximate}
Vladimir~Fedorovich Demyanov and Aleksandr~Moiseevich Rubinov.
\newblock {\em Approximate methods in optimization problems}, volume~32.
\newblock Elsevier Publishing Company, 1970.

\bibitem{den-resolution1997}
AJ~Den~Dekker and A~Van~den Bos.
\newblock Resolution: a survey.
\newblock {\em JOSA A}, 14(3):547--557, 1997.

\bibitem{Denoyelle2017}
Quentin Denoyelle, Vincent Duval, and Gabriel Peyr\'{e}.
\newblock Support recovery for sparse super-resolution of positive measures.
\newblock {\em J. Fourier Anal. Appl.}, 23(5):1153--1194, 2017.

\bibitem{donoho-superresolution1992}
David~L. Donoho.
\newblock Super-resolution via sparsity constraints.
\newblock {\em SIAM J. Math. Anal., 23(5):1309-1331}, 9 1992.

\bibitem{donoho-adapting1999}
David~L. Donoho and Iain~M. Johnstone.
\newblock Adapting to unknown smoothness via wavelet shrinkage.
\newblock {\em J. Amer. Statist. Assoc.}, 90(432):1200--1224, 1995.

\bibitem{dos2016}
Marcelina~Cardoso Dos~Santos, R{\'e}gis D{\'e}turche, Cyrille V{\'e}zy, and
  Rodolphe Jaffiol.
\newblock Topography of cells revealed by variable-angle total internal
  reflection fluorescence microscopy.
\newblock {\em Biophysical journal}, 111(6):1316--1327, 2016.

\bibitem{duval-ndsc2017}
Vincent Duval.
\newblock {A characterization of the Non-Degenerate Source Condition in
  Super-Resolution}.
\newblock working paper or preprint, December 2017.

\bibitem{duval-exact2013}
Vincent Duval and Gabriel Peyr\'{e}.
\newblock Exact support recovery for sparse spikes deconvolution.
\newblock {\em Found. Comput. Math.}, 15(5):1315--1355, 2015.

\bibitem{duval-thingridsI2017}
Vincent Duval and Gabriel Peyr\'{e}.
\newblock Sparse spikes super-resolution on thin grids {I}: the {L}asso.
\newblock {\em Inverse Problems}, 33(5):055008, 29, 2017.

\bibitem{duval-thingridsII2017}
Vincent Duval and Gabriel Peyr\'{e}.
\newblock Sparse spikes super-resolution on thin grids {II}: the continuous
  basis pursuit.
\newblock {\em Inverse Problems}, 33(9):095008, 42, 2017.

\bibitem{efron-lars2004}
Bradley Efron, Trevor Hastie, Iain Johnstone, and Robert Tibshirani.
\newblock Least angle regression.
\newblock {\em Ann. Statist.}, 32(2):407--499, 2004.
\newblock With discussion, and a rejoinder by the authors.

\bibitem{eftekhari2015greed}
Armin Eftekhari and Michael~B Wakin.
\newblock Greed is super: A fast algorithm for super-resolution.
\newblock {\em arXiv preprint arXiv:1511.03385}, 2015.

\bibitem{elghaoui-safe2010}
Laurent El~Ghaoui, Vivian Viallon, and Tarek Rabbani.
\newblock Safe feature elimination in sparse supervised learning.
\newblock {\em Pac. J. Optim.}, 8(4):667--698, 2012.

\bibitem{fernandez-support2013}
C.~Fernandez{-}Granda.
\newblock Support detection in super-resolution.
\newblock {\em Proc. Proceedings of the 10th International Conference on
  Sampling Theory and Applications}, pages 145--148, 2013.

\bibitem{carlos-super2015}
Carlos Fernandez-Granda.
\newblock Super-resolution of point sources via convex programming.
\newblock {\em Inf. Inference}, 5(3):251--303, 2016.

\bibitem{figueiredo-em2003}
M\'{a}rio A.~T. Figueiredo and Robert~D. Nowak.
\newblock An {EM} algorithm for wavelet-based image restoration.
\newblock {\em IEEE Trans. Image Process.}, 12(8):906--916, 2003.

\bibitem{flinthweiss2018}
Axel Flinth and Pierre Weiss.
\newblock Exact solutions of infinite dimensional total-variation regularized
  problems.
\newblock {\em Information and Inference: A Journal of the IMA}, page iay016,
  2018.

\bibitem{frank-fw1956}
Marguerite Frank and Philip Wolfe.
\newblock An algorithm for quadratic programming.
\newblock {\em Naval Res. Logist. Quart.}, 3:95--110, 1956.

\bibitem{gustafsson-surpassing2000}
M.~G.~L. Gustafsson.
\newblock Surpassing the lateral resolution limit by a factor of two using
  structured illumination microscopy.
\newblock {\em Journal of Microscopy}, 198(2):82--87, 2000.

\bibitem{harchaoui-2015conditional}
Zaid Harchaoui, Anatoli Juditsky, and Arkadi Nemirovski.
\newblock Conditional gradient algorithms for norm-regularized smooth convex
  optimization.
\newblock {\em Mathematical Programming}, 152(1-2):75--112, 2015.

\bibitem{Hell:94}
Stefan~W. Hell and Jan Wichmann.
\newblock Breaking the diffraction resolution limit by stimulated emission:
  stimulated-emission-depletion fluorescence microscopy.
\newblock {\em Opt. Lett.}, 19(11):780--782, Jun 1994.

\bibitem{henriques2010quickpalm}
Ricardo Henriques, Mickael Lelek, Eugenio~F Fornasiero, Flavia Valtorta,
  Christophe Zimmer, and Musa~M Mhlanga.
\newblock Quickpalm: 3d real-time photoactivation nanoscopy image processing in
  imagej.
\newblock {\em Nature methods}, 7(5):339, 2010.

\bibitem{herzet-ols2014}
C\'{e}dric Herzet, Ang\'{e}lique Dr\'{e}meau, and Charles Soussen.
\newblock Relaxed recovery conditions for {OMP}/{OLS} by exploiting both
  coherence and decay.
\newblock {\em IEEE Trans. Inform. Theory}, 62(1):459--470, 2016.

\bibitem{hess-ultra2007}
Samuel Hess, Thanu P~K~Girirajan, and Michael Mason.
\newblock Ultra-high resolution imaging by fluorescence photoactivation
  localization microscopy.
\newblock {\em Biophysical journal}, 91:4258--72, 01 2007.

\bibitem{SMLM}
Seamus Holden and Daniel Sage.
\newblock Super-resolution fight club.
\newblock 10:152 EP --, 02 2016.

\bibitem{hua-pencils1990}
Yingbo Hua and Tapan~K. Sarkar.
\newblock Matrix pencil method for estimating parameters of exponentially
  damped/undamped sinusoids in noise.
\newblock {\em IEEE Trans. Acoust. Speech Signal Process.}, 38(5):814--824,
  1990.

\bibitem{huang-three2008}
Bo~Huang, Wenqin Wang, Mark Bates, and Xiaowei Zhuang.
\newblock Three-dimensional super-resolution imaging by stochastic optical
  reconstruction microscopy.
\newblock {\em Science}, 2008.

\bibitem{huang-backtracking2011}
Honglin Huang and Anamitra Makur.
\newblock Backtracking-based matching pursuit method for sparse signal
  reconstruction.
\newblock {\em IEEE Signal Processing Letters}, 18(7):391--394, 2011.

\bibitem{huang-3dastmultifp}
Jiaqing Huang, Mingzhai Sun, Kristyn Gumpper, Yuejie Chi, and Jianjie Ma.
\newblock 3d multifocus astigmatism and compressed sensing (3d macs) based
  superresolution reconstruction.
\newblock {\em Biomed. Opt. Express}, 6(3):902--917, Mar 2015.

\bibitem{huang2017super}
Jiaqing Huang, Mingzhai Sun, Jianjie Ma, and Yuejie Chi.
\newblock Super-resolution image reconstruction for high-density
  three-dimensional single-molecule microscopy.
\newblock {\em IEEE Transactions on Computational Imaging}, 3(4):763--773,
  2017.

\bibitem{jacques2008geometrical}
Laurent Jacques and Christophe De~Vleeschouwer.
\newblock A geometrical study of matching pursuit parametrization.
\newblock {\em IEEE Transactions on Signal Processing}, 56(7):2835--2848, 2008.

\bibitem{jaggi2013revisiting}
Martin Jaggi.
\newblock Revisiting frank-wolfe: Projection-free sparse convex optimization.
\newblock In {\em ICML (1)}, pages 427--435, 2013.

\bibitem{juette-three2008}
Manuel Juette, Travis J~Gould, Mark Lessard, Michael Mlodzianoski, Bhupendra
  S~Nagpure, Brian Thomas~Bennett, Samuel Hess, and Joerg Bewersdorf.
\newblock Three-dimensional sub-100 nm resolution fluorescence microscopy of
  thick samples.
\newblock {\em Nature methods}, 5:527--9, 07 2008.

\bibitem{Kailath_1990}
Thomas Kailath.
\newblock {ESPRIT}--estimation of signal parameters via rotational invariance
  techniques.
\newblock {\em Optical Engineering}, 29(4):296, 1990.

\bibitem{kirshner2013}
Hagai Kirshner, C{\'e}dric Vonesch, and Michael Unser.
\newblock Can localization microscopy benefit from approximation theory?
\newblock In {\em 2013 IEEE 10th International Symposium on Biomedical Imaging
  (ISBI)}, pages 588--591. IEEE, 2013.

\bibitem{lasserre-global2004}
Jean~Bernard Lasserre.
\newblock Global optimization with polynomials and the problem of moments.
\newblock {\em SIAM J. Optim.}, 11(3):796--817, 2000/01.

\bibitem{lasserre-moments2009}
Jean~Bernard Lasserre.
\newblock Moments, positive polynomials and their applications.
\newblock {\em Imperial College Press Optimization Series}, 1:xxii+361, 2010.

\bibitem{levitin-constrained1966}
Evgenii~Solomonovich Levitin and Boris~Teodorovich Polyak.
\newblock Constrained minimization methods.
\newblock {\em Zhurnal Vychislitel'noi Matematiki i Matematicheskoi Fiziki},
  6(5):787--823, 1966.

\bibitem{LiangLinearFB}
Jingwei Liang, Jalal Fadili, and Gabriel Peyr\'{e}.
\newblock Activity identification and local linear convergence of
  forward-backward-type methods.
\newblock {\em SIAM J. Optim.}, 27(1):408--437, 2017.

\bibitem{liao-music2014}
Wenjing Liao and Albert Fannjiang.
\newblock M{USIC} for single-snapshot spectral estimation: stability and
  super-resolution.
\newblock {\em Appl. Comput. Harmon. Anal.}, 40(1):33--67, 2016.

\bibitem{mallat-mp1994}
St\'ephane Mallat and Zhifeng Zhang.
\newblock Matching pursuit with time-frequency dictionaries.
\newblock {\em Signal Processing, IEEE Transactions on}, 41:3397 -- 3415, 01
  1994.

\bibitem{salmon-screening2017}
Mathurin Massias, Alexandre Gramfort, and Joseph Salmon.
\newblock From safe screening rules to working sets for faster lasso-type
  solvers.
\newblock {\em arXiv preprint arXiv:1703.07285}, 2017.

\bibitem{candes-stable2014}
Veniamin~I. Morgenshtern and Emmanuel~J. Cand\`es.
\newblock Super-resolution of positive sources: the discrete setup.
\newblock {\em SIAM J. Imaging Sci.}, 9(1):412--444, 2016.

\bibitem{rama-three2009}
Sri Rama Prasanna~Pavani, Michael A~Thompson, Julie S~Biteen, Samuel Lord,
  Na~Liu, Robert Twieg, Rafael Piestun, and William Moerner.
\newblock Three-dimensional, single-molecule fluorescence imaging beyond the
  diffraction limit by using a double-helix point spread function.
\newblock {\em Proceedings of the National Academy of Sciences of the United
  States of America}, 106:2995--9, 03 2009.

\bibitem{rockafellar2015convex}
Ralph~Tyrell Rockafellar.
\newblock {\em Convex analysis}.
\newblock Princeton university press, 2015.

\bibitem{rudin-real1987}
Walter Rudin.
\newblock {\em Real and Complex Analysis, 3rd Ed.}
\newblock McGraw-Hill, Inc., New York, NY, USA, 1987.

\bibitem{rust2006sub}
Michael~J Rust, Mark Bates, and Xiaowei Zhuang.
\newblock Sub-diffraction-limit imaging by stochastic optical reconstruction
  microscopy (storm).
\newblock {\em Nature methods}, 3(10):793--796, 2006.

\bibitem{sage-quantitative2015}
Daniel Sage, Hagai Kirshner, Thomas Pengo, Nico Stuurman, Junhong Min, Suliana
  Manley, and Michael Unser.
\newblock Quantitative evaluation of software packages for single-molecule
  localization microscopy.
\newblock {\em Nature methods}, 12, 06 2015.

\bibitem{Sage362517}
Daniel Sage, Thanh-An Pham, Hazen Babcock, Tomas Lukes, Thomas Pengo, Ramraj
  Velmurugan, Alex Herbert, Anurag Agarwal, Silvia Colabrese, Ann Wheeler, Anna
  Archetti, Bernd Rieger, Raimund Ober, Guy~M. Hagen, Jean-Baptiste Sibarita,
  Jonas Ries, Ricardo Henriques, Michael Unser, and Seamus Holden.
\newblock Super-resolution fight club: A broad assessment of 2d \& 3d
  single-molecule localization microscopy software.
\newblock {\em bioRxiv}, 2018.

\bibitem{schmidt-multiple1986}
Ralph Schmidt.
\newblock Multiple emitter location and signal parameter estimation.
\newblock {\em IEEE transactions on antennas and propagation}, 34(3):276--280,
  1986.

\bibitem{Soubies2016}
Emmanuel Soubies, S{\'e}bastien Schaub, Agata Radwanska, Ellen Van
  Obberghen-Schilling, Laure Blanc-F{\'e}raud, and Gilles Aubert.
\newblock A framework for multi-angle tirf microscope calibration.
\newblock In {\em Biomedical Imaging (ISBI), 2016 IEEE 13th International
  Symposium on}, pages 668--671. IEEE, 2016.

\bibitem{soussen-omp2013}
Charles Soussen, R\'{e}mi Gribonval, J\'{e}r\^{o}me Idier, and C\'{e}dric
  Herzet.
\newblock Joint {$k$}-step analysis of orthogonal matching pursuit and
  orthogonal least squares.
\newblock {\em IEEE Trans. Inform. Theory}, 59(5):3158--3174, 2013.

\bibitem{soussen-homotopy2015}
Charles Soussen, J\'{e}r\^{o}me Idier, Junbo Duan, and David Brie.
\newblock Homotopy based algorithms for {$\ell_0$}-regularized least-squares.
\newblock {\em IEEE Trans. Signal Process.}, 63(13):3301--3316, 2015.

\bibitem{tang2015resolution}
Gongguo Tang.
\newblock Resolution limits for atomic decompositions via markov-bernstein type
  inequalities.
\newblock In {\em Sampling Theory and Applications (SampTA), 2015 International
  Conference on}, pages 548--552. IEEE, 2015.

\bibitem{tang-justdiscretize2013}
Gongguo Tang, Badri~Narayan Bhaskar, and Benjamin Recht.
\newblock Sparse recovery over continuous dictionaries-just discretize.
\newblock {\em Signals, Systems and Computers, 2013 Asilomar Conference on},
  pages 1043--1047, 2013.

\bibitem{tibshirani-regression1994}
Robert Tibshirani.
\newblock Regression shrinkage and selection via the lasso.
\newblock {\em J. Roy. Statist. Soc. Ser. B}, 58(1):267--288, 1996.

\bibitem{toh-fistasvd2010}
Kim-Chuan Toh and Sangwoon Yun.
\newblock An accelerated proximal gradient algorithm for nuclear norm
  regularized linear least squares problems.
\newblock {\em Pac. J. Optim.}, 6(3):615--640, 2010.

\bibitem{tropp-omp2008}
Joel~A. Tropp and Anna~C. Gilbert.
\newblock Signal recovery from random measurements via orthogonal matching
  pursuit.
\newblock {\em IEEE Trans. Inform. Theory}, 53(12):4655--4666, 2007.

\bibitem{tseng-convergence2001}
Paul Tseng.
\newblock Convergence of a block coordinate descent method for
  nondifferentiable minimization.
\newblock {\em J. Optim. Theory Appl.}, 109(3):475--494, 2001.

\bibitem{wu-coordinate2008}
Tong~Tong Wu and Kenneth Lange.
\newblock Coordinate descent algorithms for lasso penalized regression.
\newblock {\em Ann. Appl. Stat.}, 2(1):224--244, 2008.

\bibitem{Zhang2007}
Bo~Zhang, Josiane Zerubia, and Jean-Christophe Olivo-Marin.
\newblock Gaussian approximations of fluorescence microscope point-spread
  function models.
\newblock {\em Applied optics}, 46(10):1819--1829, 2007.

\bibitem{Zheng2018}
Cheng Zheng, Guangyuan Zhao, Wenjie Liu, Youhua Chen, Zhimin Zhang, Luhong Jin,
  Yingke Xu, Cuifang Kuang, and Xu~Liu.
\newblock Three-dimensional super-resolved live cell imaging through polarized
  multi-angle tirf.
\newblock {\em Optics letters}, 43(7):1423--1426, 2018.

\end{thebibliography}

\end{document}